\newtheorem{thm}{Theorem}[section]
\newtheorem{definition}[thm]{Definition}
\newtheorem{lemma}[thm]{Lemma}
\newtheorem{prop}[thm]{Proposition}
\newtheorem{remark}[thm]{Remark}
\newcommand{\RR}{\mathbb{R}^{2}}
\newcommand{\RRD}{\mathbb{R}^{3}}
\newcommand{\Umus}{\textbf{U}^{\mu,\gamma}_{\! \sslash}}
\newcommand{\lver}{\left\lvert}
\newcommand{\llver}{\left\lvert\left\lvert}
\newcommand{\rver}{\right\rvert}
\newcommand{\rrver}{\right\rvert \right\rvert}
\newcommand{\Vastsh}{\textbf{V}^{\ast}_{\text{sh}}}
\renewcommand{\footnote}[1]{\textsuperscript{\addtocounter{footnote}{1}(\thefootnote)}\footnotetext{#1}}
\def \epsilon {\varepsilon}
\begin{document}
\title{\textbf{The KP approximation under a weak Coriolis forcing}}
\author{Benjamin MELINAND\footnote{Indiana University. Email : bmelinan@indiana.edu}}
\date{October 2017}

\maketitle

\vspace{-0.3cm}

\begin{abstract}
\noindent In this paper, we study the asymptotic behavior of weakly transverse water-waves under a weak Coriolis forcing in the long wave regime. We derive the Boussinesq-Coriolis equations in this setting and we provide a rigorous justification of this model. Then, from these equations, we derive two other asymptotic models. When the Coriolis forcing is weak, we fully justify the rotation-modified Kadomtsev-Petviashvili equation (also called Grimshaw-Melville equation). When the Coriolis forcing is very weak, we rigorously justify the Kadomtsev-Petviashvili equation. This work provides the first mathematical justification of the KP approximation under a Coriolis forcing.
\end{abstract}

\section{Introduction}\label{Intro}

\noindent We consider the motion of an inviscid, incompressible fluid under the influence of the gravity $\bm{g} = -g \bm{e_{z}}$ and the rotation of the Earth with a rotation vector $\textbf{f} = \frac{f}{2} \bm{e_{z}}$. We assume that the fluid has a constant density $\rho$ and that no surface tension is involved.  We assume that the surface is a graph above the still water level and that the seabed is flat. We denote by $X=(x,y) \in \RR$ the horizontal variable and by $z \in \mathbb{R}$ the vertical variable. The fluid occupies the domain $\Omega_{t} := \{ (X,z) \in \RRD \text{ , } -H < z < \zeta (t,X) \}$. We denote by $\textbf{U} = \left( \textbf{V}, \text{w} \right)^{t}$ the velocity in the fluid. Notice that $\textbf{V}$ is the horizontal component of $\textbf{U}$ and $\text{w}$ its vertical component. Finally, we assume that the pressure $\mathcal{P}$ is constant at the surface of the fluid. The  equations governing such a fluid are the free surface Euler-Coriolis equations\footnote{The centrifugal potential is assumed to be constant and included in the pressure term.}

\begin{equation}\label{Euler_equations}
\left\{
\begin{aligned}
&\partial_{t} \textbf{U} + \left( \textbf{U} \cdot \nabla_{\! X,z} \right) \textbf{U} + \textbf{f} \times \textbf{U} = - \frac{1}{\rho} \nabla_{\! X,z} \mathcal{P} - g \bm{e_{z}} \text{ in } \Omega_{t},\\
&\text{div} \; \textbf{U} = 0 \text{ in } \Omega_{t},
\end{aligned}
\right.
\end{equation}

\noindent with the boundary conditions

\begin{equation*}
\left\{
\begin{aligned}
&\mathcal{P}_{|z=\zeta} = P_{0}, \\
&\partial_{t} \zeta	- \underline{\textbf{U}} \cdot \textbf{N} = 0, \\
&\text{w}_{b} = 0,
\end{aligned}
\right.
\end{equation*}

\noindent where $P_{0}$ is constant, $\textbf{N} = \begin{pmatrix} - \nabla \zeta \\ 1 \end{pmatrix}$, $\underline{\textbf{U}} = \begin{pmatrix} \underline{\textbf{V}} \\ \underline{\text{w}} \end{pmatrix} = \textbf{U}_{|z=\zeta}$ and $\textbf{U}_{b} = \begin{pmatrix} \textbf{V}_{b} \\ \text{w}_{b} \end{pmatrix} = \textbf{U}_{|z=-H}.$

\medskip

\noindent In this work, we do not directly work on the free surface Euler-Coriolis equations. We rather consider another formulation called the Castro-Lannes formulation (see \cite{Castro_Lannes_vorticity}). This formulation generalizes the well-known Zakharov/Craig-Sulem formulation (\cite{Zakharov,Craig_Sulem_2}) to a fluid with a rotational component. In \cite{Castro_Lannes_vorticity}, Castro and Lannes  shown that we can express the free surface Euler equations thanks to the unknowns $\left(\zeta, \textbf{U}_{\! \sslash}, \bm{\omega} \right)$\footnote{Notice that Castro and Lannes used the unknowns $\left(\zeta, \frac{\nabla}{\Delta} \cdot \textbf{U}_{\! \sslash}, \bm{\omega} \right)$. However, as noticed in \cite{my_long_wave_corio}, the unknowns $\left(\zeta, \textbf{U}_{\! \sslash}, \bm{\omega} \right)$ are better to derive shallow water asymptotic models.} where $\bm{\omega} = \text{Curl} \; \textbf{U}$ is the vorticity of the fluid and

\begin{equation*}
\textbf{U}_{\! \sslash} = \underline{\textbf{V}} + \underline{\text{w}} \nabla \zeta.
\end{equation*}

\noindent Then, they provide a system of three equations on these unknowns. In \cite{my_coriolis}, a similar work has been done to take into account the Coriolis forcing. It leads to the following system, called the Castro-Lannes system or the water waves equations with vorticity,

\begin{equation}\label{castro_lannes_formulation_dim}
\left\{
\begin{aligned}
&\hspace{-0.05cm} \partial_{t} \zeta -\underline{\textbf{U}}\cdot \textbf{N} = 0,\\
&\partial_{t} \textbf{U}_{\! \sslash} \hspace{-0.1cm} +  \hspace{-0.1cm} \nabla \zeta  \hspace{-0.1cm} +  \hspace{-0.1cm} \frac{1}{2} \nabla \hspace{-0.1cm} \left\lvert \textbf{U}_{\! \sslash} \right\rvert^{2}  \hspace{-0.1cm} -  \hspace{-0.1cm} \frac{1}{2} \nabla \hspace{-0.1cm} \left[ \left( 1 +  \left\lvert \nabla \zeta\right\rvert^{2} \right) \underline{\text{w}}^{2} \right]  \hspace{-0.1cm} +  \hspace{-0.1cm} \left( \nabla^{\perp} \cdot \textbf{U}_{\! \sslash} \right) \underline{\textbf{V}}^{\perp}  \hspace{-0.1cm} +  \hspace{-0.1cm} f \underline{\textbf{V}}^{\perp} = 0,\\
&\hspace{-0.05cm} \partial_{t} \bm{\omega} \hspace{-0.05cm} + \hspace{-0.05cm}  \left( \textbf{U} \hspace{-0.05cm} \cdot \hspace{-0.05cm} \nabla_{\! X,z} \hspace{-0.05cm} \right) \bm{\omega} \hspace{-0.05cm} = \left( \bm{\omega} \cdot \nabla_{\! X,z} \right) \textbf{U} \hspace{-0.05cm} + \hspace{-0.05cm} f \partial_{z} \textbf{U},
\end{aligned}
\right.
\end{equation}

\noindent where $\textbf{U} = \begin{pmatrix} \textbf{V} \\ \text{w} \end{pmatrix} = \textbf{U}[\zeta](\textbf{U}_{\! \sslash},\bm{\omega})$ is the unique solution in $H^{1}(\Omega_{t})$

\noindent of the following Div-Curl equation

\begin{equation*}
\left\{
\begin{aligned}
&\text{curl} \; \textbf{U} = \bm{\omega} \text{ in } \Omega_{t},\\
&\text{div} \; \textbf{U} = 0 \text{ in } \Omega_{t},\\
&\left(\underline{\textbf{V}} + \underline{\text{w}} \nabla \zeta \right)_{|z= \zeta} = \textbf{U}_{\! \sslash},\\
&\text{w}_{b} = 0.
\end{aligned}
\right.
\end{equation*}

\noindent The main goal of this paper is to study weakly transverse long waves. Therefore, we consider a nondimensionalization of the previous equations. Five physical parameters are involved in this work : the typical amplitude of the surface $a$, the typical longitudinal scale $L_{x}$, the typical transverse scale $L_{y}$, the characteristic water depth $H$ and the typical Coriolis frequency $f$. We introduce four dimensionless parameters 

\begin{equation*}
\mu = \frac{H^{2}}{L_{x}^{2}} \text{, } \epsilon =\frac{a}{H} \text{, } \text{Ro} = \frac{a \sqrt{gH}}{H f L_{x}} \text{ and } \gamma = \frac{L_{x}}{L_{y}}.
\end{equation*}

\noindent The parameter $\mu$ is called the shallowness parameter. The parameter $\epsilon$ is called the nonlinearity parameter. The parameter $\text{Ro}$ is the Rossby number and finally the parameter $\gamma$ is called the transversality parameter. Then, we can nondimensionalize the Euler equations \eqref{Euler_equations} and the Castro-Lannes equations \eqref{castro_lannes_formulation_dim} (see Part \ref{nondim}). In this work, we study the following asymptotic regime 

\begin{equation*}
\mathcal{A}_{\text{boussi}} = \left\{ \left(\mu, \epsilon, \gamma, \text{Ro}  \right), 0 \leq \mu \leq \mu_{0}, \epsilon = \mathcal{O} \left( \mu \right), \gamma \leq 1, \frac{\epsilon}{\text{Ro}} = \mathcal{O}(\sqrt{\mu}) \right\},
\end{equation*}

\noindent This regime corresponds to a long wave regime ($\epsilon = \mathcal{O}(\mu)$) under a weak Coriolis forcing $\frac{\epsilon}{\text{Ro}} = \mathcal{O}(\sqrt{\mu})$. For an explanation of the first assumption, we refer to \cite{Lannes_ww}. The second assumption is standard in oceanography. Rewriting $\frac{\epsilon}{\text{Ro}}= \frac{f L_{x}}{\sqrt{gH}}$, this assumption means that the rotation period is assumed to be much smaller than the time scale of the waves. We refer to \cite{grimshaw_intern,germain-renouard} for more explanations about this assumption (see also \cite{grimshaw_KP,pedlosky,gill,leblond}).

\medskip

\noindent We organize this paper in four parts. In Section \ref{nondim}, we explain how we nondimensionalize the equations and we provide a local wellposedness result. In Section \ref{boussi_deriv}, we derive and justify the Boussinesq-Coriolis equations in the asymptotic regime $\mathcal{A}_{\text{boussi}}$. The Boussinesq-Coriolis equations are a system of three equations on the surface $\zeta$ and the vertical average of the horizontal velocity denoted $\overline{\textbf{V}}$ (defined in \eqref{V_average}). They  correspond to a $\mathcal{O}(\mu^{2})$ approximation of the water waves equations. These equations are

\begin{equation}\label{boussinesq_eq}
\left\{
\begin{aligned}
&\partial_{t} \zeta + \nabla^{\gamma} \cdot \left( \left[1 + \epsilon \zeta \right] \overline{\textbf{V}} \right) = 0,\\
&\left(1- \frac{\mu}{3} \nabla^{\gamma} \nabla^{\gamma} \cdot \right) \partial_{t} \overline{\textbf{V}} + \nabla^{\gamma} \zeta + \epsilon \overline{\textbf{V}} \cdot \nabla^{\gamma} \overline{\textbf{V}} + \frac{\epsilon}{\text{Ro}} \overline{\textbf{V}}^{\perp} = 0.
\end{aligned}
\right.
\end{equation}

\noindent Then, in Section \ref{KP_approx}, we study the KP approximation which corresponds to the asymptotic regime $\mathcal{A}_{\text{boussi}}$ with $\epsilon = \mu$ and $\gamma = \sqrt{\mu}$. This second assumption corresponds to weakly transverse effects (see for instance \cite{Lannes_ww}). In this regime, we derive two other asymptotic models.  When the Coriolis forcing is weak $\left(\frac{\epsilon}{\text{Ro}} = \sqrt{\mu} \right)$, we rigorously justify the modified-rotation Kadomtsev-Petviashvili equation (see Subsection \ref{model_RKP}), also called Grimshaw-Melville equation in the physics literature, 

\begin{equation*}
\partial_{\xi} \left( \partial_{\tau} k +  \frac{3}{2} k \partial_{\xi} k + \frac{1}{6} \partial_{\xi}^{3} k \right) + \frac{1}{2} \partial_{yy} k = \frac{1}{2} k.
\end{equation*}
 
\noindent Then, when the Coriolis forcing is very weak $\left(\frac{\epsilon}{\text{Ro}} = \mu \right)$, we fully justify the KP equation (see Subsection \ref{model_KP})

\begin{equation*}
\partial_{\xi} \left( \partial_{\tau} k +  \frac{3}{2} k \partial_{\xi} k + \frac{1}{6} \partial_{\xi}^{3} k \right) + \frac{1}{2} \partial_{yy} k = 0.
\end{equation*}

\noindent Finally, in Section \ref{comparaison_models}, we compare the scalar asymptotic models we derive in Section \ref{KP_approx} with the ones derived in \cite{my_long_wave_corio} : the Ostrovsky equation and the KdV equation.

\subsection{Notations/Definitions}

\noindent - If $\textbf{A} \in \RRD$, we denote by $\textbf{A}_{h}$ its horizontal component.

\medskip

\noindent - If $\textbf{V}  = \begin{pmatrix} u \\ v \end{pmatrix} \in \RR$, we define the orthogonal of $\textbf{V}$ by $\textbf{V}^{\perp} = \begin{pmatrix} -v \\ u \end{pmatrix}$.

\medskip

\noindent - In this paper, $C \left( \cdot \right)$ is a nondecreasing and positive function whose exact value has no importance.

\medskip

\noindent - Consider a vector field $\textbf{A}$ or a function $\text{w}$ defined on $\Omega$. Then, we denote $\underline{\textbf{A}} = \textbf{A}_{|z=\epsilon\zeta}$, $\underline{\text{w}} = \text{w}_{|z=\epsilon \zeta}$ and $\textbf{A}_{b} = \textbf{A}_{|z=-1}$, $\text{w}_{b} = \text{w}_{|z=-1}$.

\medskip

\noindent - If $N \in \mathbb{N}$ and $f$ is a function on $\RR$, $\left\lvert f \right\rvert_{H^{N}}$ is its $H^{N}$-norm, $\left\lvert f \right\rvert_{2}$ is its $L^{2}$-norm and $\lver f \rver_{L^{\infty}}$ its  $L^{\infty}$-norm. We denote by $\left(\; , \; \right)_{2}$ the $L^{2}(\RR)$ inner product.

\medskip

\noindent - If $f$ is a function defined on $\RR$, We use the notation $\nabla^{\gamma} f = \left( \partial_{x} f, \gamma \partial_{y} f\right)^{t}$.

\medskip

\noindent - If $u = u(X,z)$ is defined in $\Omega$, we define

\begin{equation*}
\overline{u}(X) = \frac{1}{1+\epsilon \zeta} \int_{-1}^{\epsilon \zeta(X)} u(X,z) dz \text{   and   } u^{\ast} = u - \overline{u}. 
\end{equation*}

\medskip

\noindent - For $N  \geq 0$, we define the Hilbert spaces $\partial_{x} H^{N}(\RR)$ 

\begin{equation}\label{partialx_HN}
\partial_{x} H^{N}(\RR) =  \left\{ k \in H^{N-1}(\RR) \text{,  } k = \partial_{x} \tilde{k} \text{  with  } \tilde{k} \in H^{N}(\RR)\right\}.
\end{equation}

\noindent The function $\tilde{k}$  is denoted $\partial_{x}^{-1} k$ in the following.

\medskip

\noindent - Similarly, for $N  \geq 0$, we define the Hilbert spaces $\partial^{2}_{x} H^{N}(\RR)$.

\medskip

\noindent - In the following definition, we recall the notion of consistence (see for instance \cite{Lannes_ww}). 

\begin{definition}\label{consistence}
\noindent We say that the Castro-Lannes equations \eqref{castro_lannes_formulation} are consistent of order $\mathcal{O} \left( \mu^{2} \right)$ with a system of equations $S$ for $\zeta$ and \upshape $\overline{\textbf{V}}$ \itshape if for any smooth solutions \upshape $\left(\zeta, \Umus, \bm{\omega} \right)$ \itshape of the Castro-Lannes equations \eqref{castro_lannes_formulation}, the pair \upshape $\left(\zeta, \overline{\textbf{V}}[\epsilon \zeta] \left(\Umus, \bm{\omega} \right) \right)$ (defined in \eqref{V_average}) \itshape solves $S$ up to a residual of order $\mathcal{O} \left( \mu^{2} \right)$. 
\end{definition}

\subsection{Nondimensionalization}\label{nondim}

\noindent We recall the four dimensionless parameters

\begin{equation}\label{parameters}
\mu = \frac{H^{2}}{L_{x}^{2}} \text{, } \epsilon =\frac{a}{H} \text{, } \text{Ro} = \frac{a \sqrt{gH}}{H f L_{x}} \text{ and } \gamma = \frac{L_{x}}{L_{y}}.
\end{equation}

\noindent  We nondimensionalize the variables and the unknowns. We introduce (see \cite{Lannes_ww} or \cite{my_coriolis})

\begin{equation*}
 \left\{ 
 \begin{aligned}
 &x' = \frac{x}{L_{x}} \text{, } y' = \frac{y}{L_{y}} \text{, } z' = \frac{z}{H} \text{, } \zeta' = \frac{\zeta}{a} \text{, } t' = \frac{\sqrt{gH}}{L_{x}} t \text{, }\\
 &\textbf{V}' = \sqrt{\frac{H}{g}} \frac{\textbf{V}}{a} \text{, } \text{w}'= H \sqrt{\frac{H}{g}} \frac{\text{w}}{aL_{x}} \text{ and } \mathcal{P}' = \frac{\mathcal{P}}{\rho g H}.
 \end{aligned}
 \right.
\end{equation*}

\noindent In the following, we use the following notations

\begin{equation*}
\nabla^{\gamma} = \nabla^{\gamma}_{\! X'} = \begin{pmatrix} \partial_{x'} \\ \gamma \partial_{y'} \end{pmatrix} \text{  ,  } \nabla^{\mu, \gamma}_{\! X',z'} = \begin{pmatrix} \sqrt{\mu} \nabla^{\gamma}_{\! X'} \\ \partial_{z'} \end{pmatrix} \text{  ,  } \text{curl}^{\mu, \gamma} = \nabla^{\mu, \gamma}_{\! X',z'} \times \text{, } \text{div}^{\mu, \gamma} = \nabla^{\mu, \gamma}_{\! X',z'} \cdot .
\end{equation*}

\noindent We also define 

\begin{equation}\label{nondim_U}
\textbf{U}^{\mu} = \begin{pmatrix} \sqrt{\mu} \textbf{V}' \\ \text{w}' \end{pmatrix} \text{, } \bm{\omega}'=\frac{1}{\mu} \text{curl}^{\mu, \gamma} \textbf{U}^{\mu},
\end{equation}

\noindent and 

\begin{equation*}
\underline{\textbf{U}}^{\mu} = \begin{pmatrix} \sqrt{\mu} \underline{\textbf{V}}' \\ \underline{\text{w}}' \end{pmatrix} = \textbf{U}^{\mu}_{|z'= \epsilon \zeta'} \text{, } \textbf{U}^{\mu}_{b} = \textbf{U}^{\mu}_{|z'=-1}, \textbf{N}^{\mu, \gamma} =  \begin{pmatrix} - \epsilon \sqrt{\mu} \nabla^{\gamma} \zeta' \\ 1 \end{pmatrix}.
\end{equation*}

\begin{remark}
\noindent Notice that the nondimensionalization of the vorticity presented in \eqref{nondim_U} corresponds to weakly sheared flows (see \cite{Castro_Lannes_shallow_water}, \cite{teshukov_shear_flows}, \cite{richard_gravi_shear_flows}).
\end{remark}

\noindent The nondimensionalized fluid domain is

\begin{equation*}
\Omega^{\prime}_{t'} := \{ (X',z') \in \RRD \text{ , } -1 < z' < \epsilon \zeta' (t',X') \}.
\end{equation*}

\noindent Finally, the Euler-Coriolis equations \eqref{Euler_equations} become 

\begin{equation*}
\left\{
\begin{aligned}
&\partial_{t'} \textbf{U}^{\mu} + \frac{\epsilon}{\mu} \left( \textbf{U}^{\mu} \cdot \nabla_{\! X',z'}^{\mu, \gamma} \right) \textbf{U}^{\mu} + \frac{\epsilon \sqrt{\mu}}{\text{Ro}} \begin{pmatrix} \;\; \textbf{V}^{\prime \perp} \\ 0 \end{pmatrix} = - \frac{1}{\epsilon} \nabla_{\! X',z'}^{\mu, \gamma} \mathcal{P}' - \frac{1}{\epsilon} \bm{e_{z}} \text{ in } \Omega^{\prime}_{t},\\
&\text{div}^{\mu, \gamma}_{\! X' \! ,z'} \; \textbf{U}^{\mu} = 0 \text{ in } \Omega^{\prime}_{t},
\end{aligned}
\right.
\end{equation*}

\noindent with the boundary conditions

\begin{equation*}
\left\{
\begin{aligned}
&\partial_{t'} \zeta'	- \frac{1}{\mu} \underline{\textbf{U}}^{\mu} \cdot \textbf{N}^{\mu, \gamma} = 0, \\
&\text{w}'_{b} = 0.
\end{aligned}
\right.
\end{equation*}

\noindent In the following, we omit the primes. We can proceed similarly to nondimensionalize the Castro-Lannes formulation. We define the quantity

\begin{equation*}
\Umus = \underline{\textbf{V}} + \epsilon \underline{\text{w}} \nabla^{\gamma} \zeta.
\end{equation*}

\noindent Then, the Castro-Lannes formulation becomes (see \cite{Castro_Lannes_vorticity} or \cite{my_coriolis} when $\gamma = 1$),

\small
\begin{equation}\label{castro_lannes_formulation}
\left\{
\begin{aligned}
&\hspace{-0.05cm} \partial_{t} \zeta - \frac{1}{\mu} \underline{\textbf{U}}^{\mu} \cdot \textbf{N}^{\mu, \gamma} = 0,\\
&\partial_{t} \Umus \hspace{-0.1cm} +  \hspace{-0.1cm} \nabla^{\gamma} \zeta  \hspace{-0.1cm} +  \hspace{-0.1cm} \frac{\epsilon}{2} \nabla^{\gamma}  \hspace{-0.1cm} \left\lvert \Umus \right\rvert^{2}  \hspace{-0.1cm} -  \hspace{-0.1cm} \frac{\epsilon}{2 \mu} \nabla^{\gamma}  \hspace{-0.1cm} \left[ \left( 1 +  \epsilon^{2} \mu \left\lvert \nabla^{\gamma} \zeta\right\rvert^{2} \right) \underline{\text{w}}^{2} \right]  \hspace{-0.1cm} +  \hspace{-0.1cm} \epsilon \hspace{-0.05cm} \left(\hspace{-0.05cm} \nabla^{\perp} \hspace{-0.05cm} \cdot \hspace{-0.05cm} \Umus \hspace{-0.05cm} \right) \hspace{0.05cm} \underline{\textbf{V}}^{\perp}  \hspace{-0.1cm} +  \hspace{-0.1cm} \frac{\epsilon}{\text{Ro}} \underline{\textbf{V}}^{\perp} = 0,\\
&\hspace{-0.05cm} \partial_{t} \bm{\omega} \hspace{-0.05cm} + \hspace{-0.05cm} \frac{\epsilon}{\mu} \hspace{-0.05cm} \left( \hspace{-0.05cm} \textbf{U}^{\mu} \hspace{-0.05cm} \cdot \hspace{-0.05cm} \nabla^{\mu, \gamma}_{\! X,z} \hspace{-0.05cm} \right) \bm{\omega} \hspace{-0.05cm} = \hspace{-0.05cm} \frac{\epsilon}{\mu} \left( \bm{\omega} \cdot \nabla^{\mu, \gamma}_{\! X,z} \right) \textbf{U}^{\mu} \hspace{-0.05cm} + \hspace{-0.05cm} \frac{\epsilon}{\mu \text{Ro}} \partial_{z} \textbf{U}^{\mu},
\end{aligned}
\right.
\end{equation}
\normalsize

\noindent where $\textbf{U}^{\mu} = \begin{pmatrix} \sqrt{\mu} \textbf{V} \\ \text{w} \end{pmatrix} = \textbf{U}^{\mu}[\epsilon \zeta](\Umus,\bm{\omega})$ is the unique solution in $H^{1}(\Omega_{t})$ of

\begin{equation*}
\left\{
\begin{aligned}
&\text{curl}^{\mu, \gamma} \; \textbf{U}^{\mu} = \mu \bm{\omega} \text{ in } \Omega_{t},\\
&\text{div}^{\mu, \gamma} \; \textbf{U}^{\mu} = 0 \text{ in } \Omega_{t},\\
&\left(\underline{\textbf{V}} + \epsilon \underline{\text{w}} \nabla^{\gamma} \zeta \right)_{|z=\epsilon \zeta} = \Umus,\\
&\text{w}_{b} = 0.
\end{aligned}
\right.
\end{equation*}

\noindent In order to rigorously derive asymptotic models, we need an existence result for the Castro-Lannes formulation \eqref{castro_lannes_formulation}. We recall that the existence of solutions to the water waves equations is always obtained under the so-called Rayleigh-Taylor condition that assumes the positivity of the Rayleigh-Taylor coefficient $\mathfrak{a}$ (see Part 3.4.5 in \cite{Lannes_ww} for the link between $\mathfrak{a}$ and the Rayleigh-Taylor condition or \cite{my_coriolis})  where

\begin{equation*}
\mathfrak{a}: = \mathfrak{a}[\epsilon \zeta](\Umus, \bm{\omega}) = 1 + \epsilon \left( \partial_{t} + \epsilon \underline{\textbf{V}}[\epsilon \zeta](\Umus, \bm{\omega}) \cdot \nabla \right) \underline{\text{w}}[\epsilon \zeta](\Umus, \bm{\omega}).
\end{equation*}

\noindent We explain in \cite{my_coriolis} how we can define the Rayleigh-Taylor coefficient $\mathfrak{a}$ at $t=0$. We also assume that the water depth is bounded from below by a positive constant

\begin{equation*}
\exists \, h_{\min} > 0 \text{ ,  } 1 + \epsilon \zeta \geq h_{\min}.
\end{equation*}

\noindent The following theorem can be found in \cite{my_coriolis} and provide a local wellposedness result of the Castro-Lannes formulation \eqref{castro_lannes_formulation} (see also Theorem 1.5 in \cite{my_long_wave_corio}).
 
\begin{thm}\label{existence_ww}
\noindent Let \upshape$A > 0$ and $\textbf{N} \geq 5$. We suppose that \upshape$\left( \mu,\epsilon,\gamma,\text{Ro} \right) \in \mathcal{A}_{\text{boussi}}$\itshape. We assume that

\upshape
\begin{equation*}
\left( \zeta_{0}, (\Umus)_{0}, \bm{\omega}_{0} \right) \in H^{{N+\frac{1}{2}}}(\RR) \times H^{N}(\RR) \times H^{N-1}(\Omega_{0}),
\end{equation*}
\itshape

\noindent with \upshape $\nabla^{\mu, \gamma} \cdot \bm{\omega}_{0} =0$ \itshape and \upshape $\nabla^{\gamma \perp} \cdot (\Umus)_{0} = \underline{\bm{\omega}_{0}} \cdot \begin{pmatrix} - \epsilon \sqrt{\mu} \nabla^{\gamma} \zeta_{0} \\ 1 \end{pmatrix}$ \itshape. Finally, we assume that

\upshape
\begin{equation*}
\exists \, h_{\min} \text{, } \mathfrak{a}_{\min} > 0 \text{ ,  } 1 + \epsilon \zeta _{0}  \geq h_{\min} \text{  and  } \mathfrak{a}[\epsilon \zeta_{0}]((\Umus)_{0}, \bm{\omega}_{0}) \geq \mathfrak{a}_{\min},
\end{equation*}
\itshape

\noindent and that

\upshape
\begin{equation*}
\lver \zeta_{0} \rver_{H^{N+\frac{1}{2}}} + \lver \frac{1}{\sqrt{1+ \sqrt{\mu} |D|}} (\Umus)_{0} \rver_{H^{N}} + \llver \bm{\omega}_{0} \rrver_{H^{N-1}} \leq A.
\end{equation*}
\itshape
 
\noindent Then, there exists \upshape$T > 0$ \itshape and a unique classical solution \upshape$\left( \zeta, \Umus, \bm{\omega} \right) $ \itshape to the Castro-Lannes \eqref{castro_lannes_formulation} on $[0,T]$ with initial data \upshape$\left( \zeta_{0}, (\Umus)_{0}, \bm{\omega}_{0} \right)$\itshape. Moreover, 

\upshape
\begin{align*}
&T =  \frac{T_{0}}{\max \left(\epsilon, \frac{\epsilon}{\text{Ro}} \right)} \text{ , }  \frac{1}{T_{0}} = c^{1},\\
&\underset{[0,T]}{\max} \left( \lver \zeta(t, \cdot) \rver_{H^{N}} + \lver \frac{1}{\sqrt{1+ \sqrt{\mu} |D|}} \Umus(t, \cdot) \rver_{H^{N-\frac{1}{2}}} + \llver \bm{\omega}(t, \cdot) \rrver_{H^{N-1}}  \right) = c^{2},
\end{align*}
\itshape

\noindent with \upshape $c^{j} = C \left(A, \mu_{0}, \frac{1}{h_{\min}}, \frac{1}{\mathfrak{a}_{\min}} \right)$.\itshape
\end{thm}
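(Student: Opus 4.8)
The plan, following the quasilinear energy method for the water waves equations of \cite{Lannes_ww,Castro_Lannes_vorticity} and its adaptation to the Coriolis forcing in \cite{my_coriolis} (where this precise statement is established; see also Theorem~1.5 in \cite{my_long_wave_corio}), is to combine three ingredients: (i) elliptic (Div-Curl) estimates for the reconstructed velocity $\textbf{U}^{\mu} = \textbf{U}^{\mu}[\epsilon\zeta](\Umus,\bm{\omega})$, uniform with respect to $\mu \in [0,\mu_{0}]$; (ii) an a priori energy estimate for \eqref{castro_lannes_formulation} in the Sobolev scale of the statement; (iii) a construction of solutions by regularization, together with a uniqueness argument. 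The weighted norm $\lver \frac{1}{\sqrt{1+\sqrt{\mu}|D|}}\Umus \rver_{H^{N}}$ is forced by (i): the Div-Curl problem controls $\Umus$ only with a half-derivative loss at frequencies $\gtrsim 1/\sqrt{\mu}$, and this precise weight is what keeps all constants independent of $\mu$.

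First I would flatten the fluid domain by the standard diffeomorphism $\Omega_{t} \to \RR \times (-1,0)$ and develop the elliptic theory. Decomposing $\textbf{U}^{\mu}$ into an anisotropically scaled gradient part and a vorticity part turns the Div-Curl system into an elliptic boundary value problem with data built from $\zeta$, $\Umus$ and $\bm{\omega}$; its solvability and regularity yield control of $\underline{\textbf{V}}$, $\underline{\text{w}}$ and of the Dirichlet-Neumann-type operator appearing in the first equation of \eqref{castro_lannes_formulation}, along with the associated commutator and Lipschitz-in-$\zeta$ (shape derivative) estimates, all with constants $C(A,\mu_{0},1/h_{\min})$. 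These are the lemmas I would import from \cite{my_coriolis,Castro_Lannes_vorticity}.

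For the a priori estimate, set $\Lambda = (1+|D|^{2})^{1/2}$, apply $\Lambda^{N}$ to the second equation of \eqref{castro_lannes_formulation} and $\Lambda^{N-1}$ to the vorticity equation, and use the first equation to substitute $\partial_{t}\zeta$. After symmetrising the quasilinear principal part with the good unknown of Alinhac, one works with an energy
\[
E^{N}(t) = \lver \zeta \rver_{H^{N}}^{2} + \big( \mathfrak{a}\,\Lambda^{N}\zeta,\ \Lambda^{N}\zeta \big)_{2} + \big(\text{a positive quadratic form in }\Umus\text{ read off the Div-Curl structure}\big) + \llver \bm{\omega} \rrver_{H^{N-1}}^{2},
\]
which is equivalent to the square of the quantity controlled in the statement exactly because $\mathfrak{a}\ge\mathfrak{a}_{\min}>0$ (the Rayleigh-Taylor condition) and $1+\epsilon\zeta\ge h_{\min}$. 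Differentiating $E^{N}$ in time, the dangerous contributions are the top-order commutators of $\Lambda^{N}$ with the variable coefficients and with the Dirichlet-Neumann operator; these are absorbed using (i) and Kato-Ponce / Coifman-Meyer product and commutator estimates, leaving $\frac{d}{dt}E^{N} \le \max(\epsilon,\frac{\epsilon}{\text{Ro}})\,C(E^{N})$. The prefactor $\max(\epsilon,\epsilon/\text{Ro})$ is precisely the size of the nonlinear and Coriolis contributions (the linearised flow being conservative), so a continuity argument and Gronwall's lemma give the existence time $T = T_{0}/\max(\epsilon,\epsilon/\text{Ro})$ and the stated bound.

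Finally, the solution would be obtained by regularizing the system --- e.g. a Friedrichs mollification inside the quasilinear operator, or a parabolic regularization --- solving the regularized problem in the Sobolev space, checking that the estimate above is uniform in the regularization parameter, and passing to the limit; since the vorticity is transported by (the regularized) $\textbf{U}^{\mu}$, its $H^{N-1}$ regularity is propagated along the flow. Uniqueness follows from an energy estimate on the difference of two solutions at one lower order of regularity, using the Lipschitz dependence of $\textbf{U}^{\mu}[\epsilon\zeta]$ on $(\zeta,\Umus,\bm{\omega})$ from (i). I expect the main obstacle to be step (i) together with the uniformity in $\mu\in[0,\mu_{0}]$: the half-derivative loss in the Div-Curl estimates has to be tracked through every commutator in the energy estimate, and the coupling with the vorticity transport equation on the moving domain must be closed simultaneously with the surface and velocity estimates rather than by a naive iteration.
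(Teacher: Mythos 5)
The paper does not prove this theorem: it is imported verbatim from \cite{my_coriolis} (see also Theorem 1.5 of \cite{my_long_wave_corio}), so there is no in-paper argument to compare against. Your sketch --- uniform-in-$\mu$ Div-Curl estimates on the flattened domain, symmetrization via the good unknown and the Rayleigh--Taylor coefficient, an energy inequality with prefactor $\max\left(\epsilon,\frac{\epsilon}{\text{Ro}}\right)$, then regularization, Gronwall and a lower-order uniqueness estimate --- is exactly the strategy carried out in that reference, so at the level of an outline it is correct and takes essentially the same approach; the real content lies in the elliptic and commutator lemmas you propose to import rather than re-derive.
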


\begin{remark}\label{control_remainder}
\noindent Notice that thanks to Theorem \ref{existence_ww} together with Part 5.5.1 in \cite{Castro_Lannes_vorticity}, the quantities \upshape $\zeta$, $\Umus$, $\bm{\omega}$ , $\overline{\textbf{V}}$, $\textbf{U}$, $\partial_{t} \zeta$, $\partial_{t} \Umus$, $\partial_{t} \bm{\omega}$ and $\partial_{t} \textbf{U}$ \itshape remain bounded uniformly with respect to the small parameters during the time evolution of the flow.
\end{remark}

\section{The Boussinesq-Coriolis equations}\label{boussi_deriv}

\noindent In this part, we derive and fully justify the Boussinesq-Coriolis equations \eqref{boussinesq_eq} under a weak Coriolis forcing $\frac{\epsilon}{\text{Ro}} = \mathcal{O} \left( \sqrt{\mu} \right)$. We recall the corresponding asymptotic regime

\begin{equation}\label{boussi_regime}
\mathcal{A}_{\text{boussi}} = \left\{ \left(\mu, \epsilon, \gamma, \text{Ro}  \right), 0 \leq \mu \leq \mu_{0}, \epsilon =  \mathcal{O}(\mu), \gamma \leq 1, \frac{\epsilon}{\text{Ro}} = \mathcal{O}(\sqrt{\mu}) \right\}.
\end{equation}

\noindent Notice that no assumption on $\gamma$ is made in this part. The Boussinesq equations correspond to an order $\mathcal{O}(\mu^{2})$ approximation of the water waves equations. Motivated by \cite{my_long_wave_corio},  we use the Castro-Lannes equations \eqref{castro_lannes_formulation} to derive this asymptotic model. We introduce the water depth

\begin{equation*}
h(t,X) = 1 + \epsilon \zeta(t,X),
\end{equation*}

\noindent and the vertical average of the horizontal velocity

\begin{equation}\label{V_average}
\overline{\textbf{V}} = \overline{\textbf{V}}[\epsilon \zeta](\Umus,\bm{\omega})(t,X) = \frac{1}{h(t,X)} \int_{z=-1}^{\epsilon \zeta(t,X)} \textbf{V}[\epsilon \zeta, \beta b](\Umus,\bm{\omega})(t,X,z) dz.
\end{equation}

\noindent In the following we denote $\textbf{V} = \left(u, v \right)^{t}$. More generally, if $u$ is a function defined in $\Omega$, we denote by $\overline{u}$ its vertical average and $u^{\ast} = u - \overline{u}$. We also have to introduce the "shear" velocity

\begin{equation*}
\textbf{V}_{\text{sh}} = \textbf{V}_{\text{sh}}[\epsilon \zeta](\Umus,\bm{\omega})(t,X)= \int_{z}^{\epsilon \zeta} \bm{\omega}_{h}^{\perp} (t,X,z') dz'
\end{equation*}

\noindent and its vertical average

\begin{equation*}
\textbf{Q} = \overline{\textbf{V}_{\text{sh}}} = \frac{1}{h} \int_{-1}^{\epsilon \zeta} \int_{z'}^{\epsilon \zeta} \bm{\omega}_{h}^{\perp}.
\end{equation*}

\noindent As noticed in \cite{Castro_Lannes_vorticity}, these quantities appear when one wants to obtain an expansion with respect to $\mu$ of the velocity. We recall that

\begin{equation*}
\Umus = \underline{\textbf{V}} + \epsilon \underline{\text{w}} \nabla^{\gamma} \zeta.
\end{equation*}

\subsection{Asymptotic expansions with respect to $\mu$}

\noindent In this part, we give an expansion of different quantities with respect to $\mu$. These expansions will help us to derive the Boussinesq-Coriolis equations \eqref{boussinesq_eq} in Section \ref{justification_BC}. The following proposition gives a link between $\overline{\textbf{V}}$ and $\underline{\textbf{U}}^{\mu} \cdot \textbf{N}^{\mu, \gamma}$ (the proof is a small adaptation of Proposition 4.2 in \cite{my_coriolis}).

\begin{prop}\label{mean_eq}
\noindent If \upshape $\left(\zeta, \Umus, \bm{\omega} \right)$ \itshape satisfy the Castro-Lannes system \eqref{castro_lannes_formulation}, we have

\upshape
\begin{equation*}
\underline{\textbf{U}}^{\mu} \cdot \textbf{N}^{\mu, \gamma} = - \mu \nabla^{\gamma} \cdot \left(h \overline{\textbf{V}} \right).
\end{equation*}
\itshape

\end{prop}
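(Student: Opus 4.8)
The plan is to derive the identity directly from the divergence-free condition on $\textbf{U}^\mu$ together with the kinematic boundary conditions at the surface and at the bottom, by integrating over the water column. First I would recall that $\textbf{U}^\mu = (\sqrt{\mu}\,\textbf{V}, \text{w})^t$ satisfies $\text{div}^{\mu,\gamma}\,\textbf{U}^\mu = \mu\,\nabla^\gamma\cdot\textbf{V} + \partial_z \text{w} = 0$ in $\Omega_t$, so that for each fixed $X$,
\begin{equation*}
\underline{\text{w}} - \text{w}_b = \int_{-1}^{\epsilon\zeta} \partial_z \text{w}(X,z)\, dz = -\mu \int_{-1}^{\epsilon\zeta} \nabla^\gamma \cdot \textbf{V}(X,z)\, dz.
\end{equation*}
Using $\text{w}_b = 0$ this gives $\underline{\text{w}} = -\mu \int_{-1}^{\epsilon\zeta} \nabla^\gamma\cdot\textbf{V}\, dz$.

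Next I would express the left-hand side of the claimed identity. By the definition of $\textbf{N}^{\mu,\gamma}$ and of $\underline{\textbf{U}}^\mu$,
\begin{equation*}
\underline{\textbf{U}}^{\mu}\cdot\textbf{N}^{\mu,\gamma} = \begin{pmatrix} \sqrt{\mu}\,\underline{\textbf{V}} \\ \underline{\text{w}} \end{pmatrix} \cdot \begin{pmatrix} -\epsilon\sqrt{\mu}\,\nabla^\gamma\zeta \\ 1 \end{pmatrix} = -\epsilon\mu\,\underline{\textbf{V}}\cdot\nabla^\gamma\zeta + \underline{\text{w}}.
\end{equation*}
Substituting the expression for $\underline{\text{w}}$ obtained above, the right-hand side becomes $-\mu\bigl(\epsilon\,\underline{\textbf{V}}\cdot\nabla^\gamma\zeta + \int_{-1}^{\epsilon\zeta}\nabla^\gamma\cdot\textbf{V}\,dz\bigr)$. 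It then remains to recognize this bracket as $\nabla^\gamma\cdot(h\overline{\textbf{V}})$. By the definition of the vertical average, $h\overline{\textbf{V}}(X) = \int_{-1}^{\epsilon\zeta(X)}\textbf{V}(X,z)\,dz$, and differentiating under the integral sign (Leibniz rule) with respect to the horizontal variables — keeping track of the $\gamma$ in $\nabla^\gamma$ — yields
\begin{equation*}
\nabla^\gamma\cdot(h\overline{\textbf{V}}) = \int_{-1}^{\epsilon\zeta}\nabla^\gamma\cdot\textbf{V}(X,z)\,dz + \epsilon\,\underline{\textbf{V}}(X)\cdot\nabla^\gamma\zeta(X),
\end{equation*}
the boundary term at $z=-1$ vanishing because the bottom is flat. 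Combining the two displays gives $\underline{\textbf{U}}^\mu\cdot\textbf{N}^{\mu,\gamma} = -\mu\,\nabla^\gamma\cdot(h\overline{\textbf{V}})$, which is the assertion.

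The computation is essentially routine; the only point requiring a little care is the Leibniz differentiation of the $z$-dependent integral with a variable upper limit $\epsilon\zeta(X)$, making sure the induced boundary term matches exactly the term $-\epsilon\mu\,\underline{\textbf{V}}\cdot\nabla^\gamma\zeta$ coming from $\textbf{N}^{\mu,\gamma}$, and that the anisotropic operator $\nabla^\gamma = (\partial_x, \gamma\partial_y)^t$ commutes with the vertical integration in the obvious way. Since the surface $\zeta$ and the velocity $\textbf{V} = \textbf{V}^\mu[\epsilon\zeta](\Umus,\bm{\omega})$ are smooth for a classical solution (Theorem \ref{existence_ww}), all these manipulations are justified. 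As indicated, this is the straightforward $\gamma$-dependent analogue of Proposition 4.2 in \cite{my_coriolis}, so no genuinely new obstacle arises.
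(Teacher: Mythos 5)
Your computation is correct and is exactly the standard argument the paper points to (it gives no proof itself, only citing Proposition 4.2 of \cite{my_coriolis}): integrate the divergence-free condition $\mu\nabla^{\gamma}\cdot\textbf{V}+\partial_{z}\text{w}=0$ over the water column, use $\text{w}_{b}=0$, and identify the surface boundary term from the Leibniz rule with the $-\epsilon\mu\,\underline{\textbf{V}}\cdot\nabla^{\gamma}\zeta$ contribution of $\textbf{N}^{\mu,\gamma}$. No gap; the adaptation to the anisotropic operator $\nabla^{\gamma}$ is handled correctly.
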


\noindent Then we get the first equation of the Boussinesq-Coriolis system from the first equation of \eqref{castro_lannes_formulation}. We also need an expansion of $\textbf{V}$ and $\text{w}$ with respect to $\mu$. We introduce the following operators

\begin{equation*}
T \left[\epsilon \zeta \right] f = \int_{z}^{\epsilon \zeta} \nabla^{\gamma} \nabla^{\gamma} \cdot  \int_{-1}^{z'} f \text{     and     } T^{\ast} \left[\epsilon \zeta \right] f = \left(  T \left[\epsilon \zeta \right] f \right)^{\ast},
\end{equation*}

\noindent In the following, we denote $T = T \left[\epsilon \zeta \right]$ and $T^{\ast} = T^{\ast} \left[\epsilon \zeta \right]$ when no confusion is possible.

\begin{prop}\label{equation_V}
\noindent In the Boussinesq regime $\mathcal{A}_{\text{boussi}}$, if \upshape $\left(\zeta, \Umus, \bm{\omega} \right)$ \itshape satisfy the Castro-Lannes system \eqref{castro_lannes_formulation}, we have

\upshape
\begin{equation*}
\begin{aligned}
&\textbf{V} = \overline{\textbf{V}} + \sqrt{\mu} \textbf{V}_{\text{sh}}^{\ast} + \mu  T^{\ast} \overline{\textbf{V}} + \mu^{\frac{3}{2}} T^{\ast} \textbf{V}_{\text{sh}}^{\ast} + \mathcal{O} \left( \mu^{2} \right),\\
&\underline{\textbf{V}} = \overline{\textbf{V}} - \sqrt{\mu} \textbf{Q} + \mu  \underline{T^{\ast} \overline{\textbf{V}}} - \mu^{\frac{3}{2}} \overline{T \textbf{V}_{\text{sh}}^{\ast}} + \mathcal{O} \left( \mu^{2} \right),
\end{aligned}
\end{equation*}
\itshape

\noindent where

 \upshape
\begin{equation*}
T^{\ast} \overline{\textbf{V}} = \frac{1}{2} \left(\frac{h^{2}}{3} - \left[ z+1 \right]^{2} \right) \nabla^{\gamma} \nabla^{\gamma} \cdot \overline{\textbf{V}} \text{   and   } \underline{T^{\ast} \overline{\textbf{V}}} =  - \frac{h^{2}}{3} \nabla^{\gamma} \nabla^{\gamma} \cdot \overline{\textbf{V}}.
\end{equation*}
\itshape

\noindent We also have

\upshape
\begin{equation*}
\begin{aligned}
&\text{w} =- \mu (z+1) \nabla^{\gamma} \overline{\textbf{V}} + \mu^{\frac{3}{2}} \int_{-1}^{z} \nabla^{\gamma} \cdot \textbf{V}_{\text{sh}}^{\ast} + \mathcal{O} \left( \mu^{2} \right),\\
&\underline{\text{w}} = - \mu h \nabla^{\gamma} \cdot \overline{\textbf{V}} + \mathcal{O} \left( \mu^{2} \right),
\end{aligned}
\end{equation*}
\itshape

\end{prop}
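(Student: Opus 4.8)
\medskip
\noindent\emph{Proof strategy.} All these expansions are consequences of the elliptic (Div--Curl) problem defining $\textbf{U}^{\mu}=(\sqrt{\mu}\textbf{V},\text{w})^{t}$, so the plan is to rewrite that problem as two coupled relations in the vertical variable and to bootstrap in powers of $\sqrt{\mu}$. Writing out $\text{div}^{\mu,\gamma}\textbf{U}^{\mu}=0$ and the horizontal part of $\text{curl}^{\mu,\gamma}\textbf{U}^{\mu}=\mu\bm{\omega}$ one gets $\partial_{z}\text{w}=-\mu\,\nabla^{\gamma}\cdot\textbf{V}$ and $\partial_{z}\textbf{V}=\nabla^{\gamma}\text{w}-\sqrt{\mu}\,\bm{\omega}_{h}^{\perp}$, together with $\text{w}_{b}=0$ and $\underline{\textbf{V}}=\Umus-\epsilon\underline{\text{w}}\nabla^{\gamma}\zeta$ (the last correction being $\mathcal{O}(\mu^{2})$ since $\epsilon=\mathcal{O}(\mu)$ and $\underline{\text{w}}=\mathcal{O}(\mu)$). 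The shear velocity is precisely designed to cancel the vorticity source: as $\partial_{z}\textbf{V}_{\text{sh}}=-\bm{\omega}_{h}^{\perp}$ and $\textbf{V}_{\text{sh}}|_{z=\epsilon\zeta}=0$, the field $\textbf{W}:=\textbf{V}-\sqrt{\mu}\,\textbf{V}_{\text{sh}}$ satisfies the clean system $\partial_{z}\textbf{W}=\nabla^{\gamma}\text{w}$ with $\textbf{W}|_{z=\epsilon\zeta}=\underline{\textbf{V}}$.

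\smallskip
\noindent Then I iterate. Since $\text{w}_{b}=0$ and $\partial_{z}\text{w}=\mathcal{O}(\mu)$, one has $\text{w}=\mathcal{O}(\mu)$, hence $\textbf{W}=\underline{\textbf{V}}+\mathcal{O}(\mu)$ and $\textbf{V}=\underline{\textbf{V}}+\sqrt{\mu}\textbf{V}_{\text{sh}}+\mathcal{O}(\mu)$; averaging gives $\overline{\textbf{V}}=\underline{\textbf{V}}+\sqrt{\mu}\textbf{Q}+\mathcal{O}(\mu)$, so $\textbf{V}=\overline{\textbf{V}}+\sqrt{\mu}\textbf{V}_{\text{sh}}^{\ast}+\mathcal{O}(\mu)$. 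Plugging this into $\partial_{z}\text{w}=-\mu\nabla^{\gamma}\cdot\textbf{V}$ and integrating from $z=-1$ gives $\text{w}$ up to $\mathcal{O}(\mu^{2})$, with leading term $-\mu(z+1)\nabla^{\gamma}\cdot\overline{\textbf{V}}$ (the $\mathcal{O}(\sqrt{\mu})$ shear correction of $\textbf{V}$ feeding the $\mu^{3/2}$ term of the stated formula for $\text{w}$); injecting this $\text{w}$ into $\partial_{z}\textbf{W}=\nabla^{\gamma}\text{w}$ and integrating from $z=\epsilon\zeta$ downwards makes the operator $T$ appear, because $\int_{z}^{\epsilon\zeta}(z'+1)\,dz'=\tfrac12(h^{2}-(z+1)^{2})$, so this pass gives $\textbf{W}=\underline{\textbf{V}}+\mu\,T\overline{\textbf{V}}+\mathcal{O}(\mu^{3/2})$. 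One further pass, now keeping the $\mu^{3/2}$ shear contribution of $\text{w}$, yields $\textbf{W}=\underline{\textbf{V}}+\mu\,T\overline{\textbf{V}}+\mu^{3/2}T\textbf{V}_{\text{sh}}^{\ast}+\mathcal{O}(\mu^{2})$, hence $\textbf{V}=\underline{\textbf{V}}+\sqrt{\mu}\textbf{V}_{\text{sh}}+\mu\,T\overline{\textbf{V}}+\mu^{3/2}T\textbf{V}_{\text{sh}}^{\ast}+\mathcal{O}(\mu^{2})$.

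\smallskip
\noindent It remains to express $\underline{\textbf{V}}$ through $\overline{\textbf{V}}$ to the same accuracy. Averaging the last expansion of $\textbf{W}$ and using $\overline{\textbf{W}}=\overline{\textbf{V}}-\sqrt{\mu}\textbf{Q}$ gives $\underline{\textbf{V}}=\overline{\textbf{V}}-\sqrt{\mu}\textbf{Q}-\mu\,\overline{T\overline{\textbf{V}}}-\mu^{3/2}\overline{T\textbf{V}_{\text{sh}}^{\ast}}+\mathcal{O}(\mu^{2})$; substituting this back into the expansion of $\textbf{V}$ and using $T^{\ast}f=Tf-\overline{Tf}$ together with $\underline{T^{\ast}\overline{\textbf{V}}}=-\overline{T\overline{\textbf{V}}}$ (since $T\overline{\textbf{V}}|_{z=\epsilon\zeta}=0$) collapses every term into the announced $T^{\ast}$ form. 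Evaluating $\overline{T\overline{\textbf{V}}}=\tfrac{1}{h}\int_{-1}^{\epsilon\zeta}\tfrac12(h^{2}-(z+1)^{2})\,dz\;\nabla^{\gamma}\nabla^{\gamma}\cdot\overline{\textbf{V}}=\tfrac{h^{2}}{3}\nabla^{\gamma}\nabla^{\gamma}\cdot\overline{\textbf{V}}$ and subtracting it from $T\overline{\textbf{V}}$ yields the two closed formulas for $T^{\ast}\overline{\textbf{V}}$ and $\underline{T^{\ast}\overline{\textbf{V}}}$. The expansion of $\text{w}$ has already been produced along the way; for $\underline{\text{w}}$ the shortest route is Proposition \ref{mean_eq}, which gives $\underline{\textbf{U}}^{\mu}\cdot\textbf{N}^{\mu,\gamma}=-\mu\,\nabla^{\gamma}\cdot(h\overline{\textbf{V}})$, whence $\underline{\text{w}}=-\mu\,\nabla^{\gamma}\cdot(h\overline{\textbf{V}})+\mathcal{O}(\mu^{2})=-\mu h\,\nabla^{\gamma}\cdot\overline{\textbf{V}}+\mathcal{O}(\mu^{2})$.

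\smallskip
\noindent The only genuine difficulty is not the algebra but turning every $\mathcal{O}(\mu^{2})$ into a bound that is uniform in the small parameters and stated in the correct Sobolev norms: one must estimate the iterated vertical antiderivatives on the thin strip $-1<z<\epsilon\zeta$ and know that $\overline{\textbf{V}}$, $\textbf{V}_{\text{sh}}$, $\bm{\omega}$, $h$, $1/h$ and enough of their horizontal derivatives are bounded independently of $\mu$ — which is exactly what Theorem \ref{existence_ww} and Remark \ref{control_remainder} provide. This is the same scheme as in Proposition 4.2 of \cite{my_coriolis} and the corresponding velocity expansion in \cite{Castro_Lannes_vorticity}; the one new point is that the transversality parameter $\gamma$ is carried through each integration and, since $\gamma\le1$ on $\mathcal{A}_{\text{boussi}}$, never degrades the constants, so that the estimates of the scalar case transfer essentially verbatim.
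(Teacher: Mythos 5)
Your proposal is correct and follows essentially the same route as the paper: vertical integration of the curl equation (with the shear velocity $\textbf{V}_{\text{sh}}$ absorbing the vorticity source) combined with the exact formula $\text{w}=-\mu\nabla^{\gamma}\cdot\int_{-1}^{z}\textbf{V}$, the operator $T$ arising from the same double antiderivative, and the explicit formulas from $\int_{z}^{\epsilon\zeta}(z'+1)\,dz'=\tfrac12\left(h^{2}-(z+1)^{2}\right)$. The only organizational difference is that the paper packages your Picard-type passes into the single exact identity $\textbf{V}=\overline{\textbf{V}}+\sqrt{\mu}\,\textbf{V}_{\text{sh}}^{\ast}+\mu T^{\ast}\textbf{V}$ and then applies $Id+\mu T^{\ast}$, which yields the remainder in the closed form $\mu^{2}T^{\ast}T^{\ast}\textbf{V}$ and makes the uniform $\mathcal{O}(\mu^{2})$ bound immediate from Remark \ref{control_remainder}.
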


\begin{proof}
\noindent This proof is an adaptation of part 2.2 in \cite{Castro_Lannes_shallow_water}, Part 4.2 in  \cite{my_coriolis} and Section 2.1 in \cite{my_long_wave_corio}. First, using $\text{curl}^{\mu, \gamma} \; \textbf{U}^{\mu} = \mu \bm{\omega}$, we obtain that

\begin{equation*}
\sqrt{\mu} \bm{\omega}_{h} = \partial_{z} \textbf{V}^{\perp} - \nabla^{\gamma \perp} \text{w}.
\end{equation*}

\noindent Then, we consider the ansatz $\textbf{V} = \overline{\textbf{V}} + \sqrt{\mu} \textbf{V}_{1}$. By integrating the previous equation, we obtain

\begin{equation*}
\sqrt{\mu} \partial_{z} \textbf{V}_{1} = - \sqrt{\mu} \bm{\omega}_{h}^{\perp} + \nabla^{\gamma \perp} \text{w}.
\end{equation*}

\noindent Since $\overline{\textbf{V}_{1}} = 0$, we get

\begin{equation*}
\textbf{V}_{1} = \left(\int_{z}^{\epsilon \zeta} \bm{\omega}_{h}^{\perp} \right)^{\ast} - \frac{1}{\sqrt{\mu}} \left(\int_{z}^{\epsilon \zeta} \nabla^{\gamma} \text{w} \right)^{\ast}.
\end{equation*}

\noindent Secondly, using Proposition \ref{mean_eq} and the divergence-free assumption, we get 

\begin{equation}\label{exact_w}
\text{w} = - \mu \nabla^{\gamma} \cdot \left(\int_{-1}^{z} \textbf{V} \right).
\end{equation}

\noindent Then, gathering the previous two equality, we obtain

\begin{equation}\label{exact_epansion_V}
\textbf{V} = \overline{\textbf{V}} + \sqrt{\mu} \textbf{V}_{\text{sh}}^{\ast} + \mu  T^{\ast} \textbf{V}.
\end{equation}

\noindent Finally, the expansion of $\textbf{V}$ follows by applying the operator $Id + \mu  T^{\ast}$ to the previous equality. For the second equality, we notice that $\underline{T^{\ast} \textbf{V}_{\text{sh}}^{\ast}} = - \overline{T \textbf{V}_{\text{sh}}^{\ast}}$. The third and fourth equalities follow from the fact that $\overline{\textbf{V}}$ does not depend on $z$. The fifth equality are a consequence of Equalities \eqref{exact_w} and \eqref{exact_epansion_V}. Finally, the sixth equality follows from the fact that $\overline{\textbf{V}_{\text{sh}}^{\ast}} = 0$ and that $\epsilon=  \mathcal{O}(\mu)$.
\end{proof}

\noindent We can also get an expansion of $\partial_{t} \textbf{V}$ and $\partial_{t} \text{w}$.

\begin{prop}\label{equation_partial_t_V}
\noindent In the Boussinesq regime $\mathcal{A}_{\text{boussi}}$, if \upshape $\left(\zeta, \Umus, \bm{\omega} \right)$ \itshape satisfy the Castro-Lannes system \eqref{castro_lannes_formulation}, we have

\upshape
\begin{equation*}
\begin{aligned}
&\partial_{t} \left(\textbf{V} - \overline{\textbf{V}} - \sqrt{\mu} \textbf{V}_{\text{sh}}^{\ast} - \mu T^{\ast} \overline{\textbf{V}} - \mu^{\frac{3}{2}} T^{\ast} \textbf{V}_{\text{sh}}^{\ast} \right) = \mathcal{O} \left( \mu^{2} \right),\\
&\partial_{t} \left(\underline{\textbf{V}} - \overline{\textbf{V}} + \sqrt{\mu} \textbf{Q} - \mu \underline{T^{\ast} \overline{\textbf{V}}} + \mu^{\frac{3}{2}} \overline{T \textbf{V}_{\text{sh}}^{\ast}} \right) = \mathcal{O} \left( \mu^{2} \right),\\
&\partial_{t} \left( \underline{\text{w}} + \mu h \nabla^{\gamma} \overline{\textbf{V}} \right) = \mathcal{O} \left( \mu^{2} \right).
\end{aligned}
\end{equation*}
\itshape

\end{prop}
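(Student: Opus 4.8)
The plan is to observe that Proposition~\ref{equation_V} is in fact a statement with \emph{explicit} $\mathcal{O}(\mu^{2})$ remainders, and then simply to differentiate those remainders in time, controlling the outcome uniformly in the small parameters by means of Remark~\ref{control_remainder}. Indeed, writing $W := \overline{\textbf{V}} + \sqrt{\mu}\,\textbf{V}_{\text{sh}}^{\ast}$, the exact identity \eqref{exact_epansion_V} reads $\textbf{V} = W + \mu\,T^{\ast}\textbf{V}$, hence $\textbf{V} - W = \mu\,T^{\ast}\textbf{V}$; applying $\mu\,T^{\ast}$ to both sides and using linearity of $T^{\ast}$ gives the exact identity
\begin{equation*}
\textbf{V} - \overline{\textbf{V}} - \sqrt{\mu}\,\textbf{V}_{\text{sh}}^{\ast} - \mu\,T^{\ast}\overline{\textbf{V}} - \mu^{\frac{3}{2}}\,T^{\ast}\textbf{V}_{\text{sh}}^{\ast} = \mu^{2}\,(T^{\ast})^{2}\textbf{V}.
\end{equation*}
Evaluating this at $z=\epsilon\zeta$ and using $\underline{\textbf{V}_{\text{sh}}^{\ast}} = -\textbf{Q}$ and $\underline{T^{\ast}\textbf{V}_{\text{sh}}^{\ast}} = -\overline{T\textbf{V}_{\text{sh}}^{\ast}}$ (both coming from $\underline{\textbf{V}_{\text{sh}}} = \underline{T\textbf{V}_{\text{sh}}^{\ast}} = 0$) yields the exact identity whose right-hand side is $\mu^{2}\,\underline{(T^{\ast})^{2}\textbf{V}}$. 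Finally, Proposition~\ref{mean_eq} together with the definition $\underline{\textbf{U}}^{\mu}\cdot\textbf{N}^{\mu,\gamma} = \underline{\text{w}} - \epsilon\mu\,\underline{\textbf{V}}\cdot\nabla^{\gamma}\zeta$ and $\nabla^{\gamma}h = \epsilon\nabla^{\gamma}\zeta$ gives the exact identity
\begin{equation*}
\underline{\text{w}} + \mu\,h\,\nabla^{\gamma}\!\cdot\overline{\textbf{V}} = \epsilon\mu\,\nabla^{\gamma}\zeta\cdot\left(\underline{\textbf{V}} - \overline{\textbf{V}}\right).
\end{equation*}

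Given this, the three claims reduce respectively to the uniform bounds $\partial_{t}\big((T^{\ast})^{2}\textbf{V}\big) = \mathcal{O}(1)$, $\partial_{t}\big(\underline{(T^{\ast})^{2}\textbf{V}}\big) = \mathcal{O}(1)$, and $\partial_{t}\big(\nabla^{\gamma}\zeta\cdot(\underline{\textbf{V}} - \overline{\textbf{V}})\big) = \mathcal{O}(1)$, the last combined with $\epsilon = \mathcal{O}(\mu)$. To obtain them I would proceed as follows. The operator $T^{\ast} = T^{\ast}[\epsilon\zeta]$ depends on time only through $\epsilon\zeta$, with $\partial_{t}\big(T^{\ast}[\epsilon\zeta]f\big) = T^{\ast}[\epsilon\zeta]\big(\partial_{t}f\big) + \big(\partial_{t}T^{\ast}[\epsilon\zeta]\big)f$, the operator-derivative term carrying a factor $\epsilon\,\partial_{t}\zeta$; since $T^{\ast}$ costs two horizontal derivatives and one vertical antiderivative, $(T^{\ast})^{2}$ and its time derivative are continuous on the Sobolev scale of Theorem~\ref{existence_ww} (with $\textbf{N}\geq 5$ leaving ample room), with norms controlled by $\lver\epsilon\zeta\rver_{H^{N}}$ and $\lver\epsilon\partial_{t}\zeta\rver_{H^{N-1}}$. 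By Remark~\ref{control_remainder}, $\zeta$, $\partial_{t}\zeta$, $\textbf{V}$ and $\partial_{t}\textbf{V}$ (the latter as a component of $\partial_{t}\textbf{U}^{\mu}$) are all bounded uniformly, which gives the first claim. For the second, I would use that the trace $\underline{(\cdot)} = (\cdot)_{|z=\epsilon\zeta}$ satisfies $\partial_{t}\underline{f} = \underline{\partial_{t}f} + \epsilon(\partial_{t}\zeta)\,\underline{\partial_{z}f}$, apply it with $f = (T^{\ast})^{2}\textbf{V}$ — whose $z$-derivative is again controlled since $\partial_{z}(T^{\ast}g) = -\nabla^{\gamma}\nabla^{\gamma}\!\cdot\int_{-1}^{z} g$ is built out of $z$-integrals — and conclude as before. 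For the third, I would differentiate $\nabla^{\gamma}\zeta\cdot(\underline{\textbf{V}}-\overline{\textbf{V}})$ directly, all the ingredients ($\zeta$, $\partial_{t}\zeta$, $\overline{\textbf{V}}$, $\partial_{t}\overline{\textbf{V}}$, and the traces $\underline{\textbf{V}}$, $\partial_{t}\underline{\textbf{V}}$ coming from $\textbf{U}^{\mu}$, $\partial_{t}\textbf{U}^{\mu}$) being bounded by Remark~\ref{control_remainder}.

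The main obstacle is not conceptual — once Proposition~\ref{equation_V} is recast with the explicit remainders above, there is literally nothing new to expand, one only differentiates in $t$ — but purely technical: making the operator-norm estimates for $(T^{\ast})^{2}$ and the trace estimates at $z=\epsilon\zeta$ uniform in $\mu$ on the correct Sobolev spaces. These are exactly the product, commutator and trace inequalities already used in the proof of Proposition~\ref{equation_V} and, more systematically, in part~2.2 of \cite{Castro_Lannes_shallow_water}, part~4.2 of \cite{my_coriolis} and Section~2.1 of \cite{my_long_wave_corio}; I would simply invoke them, the only genuinely new input being the uniform control of the time derivatives $\partial_{t}\zeta$ and $\partial_{t}\textbf{U}^{\mu}$ furnished by Remark~\ref{control_remainder}.
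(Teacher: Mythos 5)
Your proposal is correct and follows essentially the same route as the paper: the paper's own (one-line) proof rests on exactly the identity $\textbf{V} = (1+\mu T^{\ast})\left(\overline{\textbf{V}} + \sqrt{\mu}\,\textbf{V}_{\text{sh}}^{\ast}\right) + \mu^{2} T^{\ast}T^{\ast}\textbf{V}$ (printed there with a sign typo) together with Proposition \ref{mean_eq}, i.e.\ on recognizing the three left-hand sides as exact $\mu^{2}$- or $\epsilon\mu$-weighted quantities whose time derivatives are then bounded uniformly via Remark \ref{control_remainder}. You have merely written out the trace identities and the operator estimates that the paper leaves implicit.
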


\begin{proof}
\noindent The result follows from Proposition \ref{mean_eq} and the equality

\begin{equation*}
\textbf{V} = \left(1- \mu T^{\ast} \right) \left( \overline{\textbf{V}} + \sqrt {\mu} \textbf{V}_{\text{sh}}^{\ast} \right) + \mu^{2} T^{\ast} T^{\ast} \textbf{V}.
\end{equation*}
\end{proof}

\noindent Since we can not express $\text{Q}$ and $\Vastsh$ with respect to $\zeta$ and $\overline{\textbf{V}}$, we need an evolution equation at order $\mathcal{O} \left( \mu^{\frac{3}{2}} \right)$ of these quantities. 

\begin{prop}\label{eq_Q}
\noindent In the Boussinesq regime $\mathcal{A}_{\text{boussi}}$, if \upshape $\left(\zeta, \Umus, \bm{\omega} \right)$ \itshape satisfy the Castro-Lannes system \eqref{castro_lannes_formulation}, then $\text{Q}$ satisfies the following equation

\upshape
\begin{equation*}
\partial_{t} \textbf{Q} + \epsilon \overline{\textbf{V}} \cdot \nabla^{\gamma} \textbf{Q} + \epsilon \textbf{Q} \cdot \nabla^{\gamma} \overline{\textbf{V}} + \frac{\epsilon}{\text{Ro} \sqrt{\mu}} \left(\overline{\textbf{V}} - \underline{\textbf{V}} \right)^{\perp} = \mathcal{O} \left( \mu^{\frac{3}{2}} \right),
\end{equation*}
\itshape

\noindent and $\Vastsh$ satisfies the equation

\upshape
\begin{equation*}
\partial_{t} \Vastsh + \epsilon \overline{\textbf{V}} \cdot \nabla^{\gamma} \Vastsh + \epsilon \Vastsh \cdot \nabla^{\gamma} \overline{\textbf{V}} - \epsilon \left[1+z \right] \left( \nabla^{\gamma} \cdot \overline{\textbf{V}} \right) \partial_{z}  \textbf{V}_{\text{sh}}^{\ast} + \frac{\epsilon}{\text{Ro} \sqrt{\mu}} \left(\textbf{V} - \overline{\textbf{V}} \right)^{\perp} = \mathcal{O} \left( \mu^{\frac{3}{2}} \right).
\end{equation*}
\itshape
\end{prop}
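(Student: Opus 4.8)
The plan is to first derive an evolution equation for the full shear velocity $\textbf{V}_{\text{sh}}$ and then recover the equations for $\textbf{Q}=\overline{\textbf{V}_{\text{sh}}}$ and $\Vastsh$ by taking the vertical average and subtracting. Starting from $\textbf{V}_{\text{sh}}=\int_{z}^{\epsilon\zeta}\bm{\omega}_{h}^{\perp}$, I differentiate in time with the Leibniz rule. This gives a boundary term $\epsilon(\partial_{t}\zeta)\underline{\bm{\omega}_{h}^{\perp}}$, in which I replace $\partial_{t}\zeta=-\nabla^{\gamma}\cdot(h\overline{\textbf{V}})$ using Proposition \ref{mean_eq}, and a bulk term $\int_{z}^{\epsilon\zeta}\partial_{t}\bm{\omega}_{h}^{\perp}$, in which I substitute the horizontal part of the vorticity equation (the third line of \eqref{castro_lannes_formulation}).

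Next I expand every term of the vorticity equation to the order that matters. Using $\textbf{U}^{\mu}\cdot\nabla^{\mu,\gamma}_{X,z}=\mu\textbf{V}\cdot\nabla^{\gamma}+\text{w}\,\partial_{z}$ (and similarly for $\bm{\omega}\cdot\nabla^{\mu,\gamma}_{X,z}$), the transport and stretching terms take the form $\epsilon\textbf{V}\cdot\nabla^{\gamma}\bm{\omega}+\tfrac{\epsilon}{\mu}\text{w}\,\partial_{z}\bm{\omega}$ and, for its horizontal part, $\epsilon\bm{\omega}_{h}\cdot\nabla^{\gamma}\textbf{V}+\tfrac{\epsilon}{\sqrt{\mu}}\bm{\omega}_{v}\,\partial_{z}\textbf{V}$. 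Since each of these already carries a factor $\epsilon=\mathcal{O}(\mu)$, I may freely replace, modulo $\mathcal{O}(\mu^{3/2})$, $\textbf{V}$ by $\overline{\textbf{V}}$, $\tfrac{\epsilon}{\mu}\text{w}$ by $-\epsilon(z+1)\nabla^{\gamma}\cdot\overline{\textbf{V}}$, $\bm{\omega}_{v}$ by $\nabla^{\gamma\perp}\cdot\overline{\textbf{V}}$, and $\partial_{z}\textbf{V}$ by $-\sqrt{\mu}\,\bm{\omega}_{h}^{\perp}$ (from $\sqrt{\mu}\,\bm{\omega}_{h}=\partial_{z}\textbf{V}^{\perp}-\nabla^{\gamma\perp}\text{w}$), invoking Propositions \ref{equation_V} and \ref{equation_partial_t_V}. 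The Coriolis term $\tfrac{\epsilon}{\sqrt{\mu}\text{Ro}}\partial_{z}\textbf{V}^{\perp}$ needs no expansion: its $z$-integral is exactly $\tfrac{\epsilon}{\sqrt{\mu}\text{Ro}}(\underline{\textbf{V}}-\textbf{V})^{\perp}$, which produces the stated Coriolis contribution.

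Then I integrate in $z$ over $(z,\epsilon\zeta)$, commuting $\nabla^{\gamma}$ with the integral (the commutator carries an extra $\epsilon$ and is negligible), using $\int_{z}^{\epsilon\zeta}\bm{\omega}_{h}^{\perp}=\textbf{V}_{\text{sh}}$ and $\int_{z}^{\epsilon\zeta}\bm{\omega}_{h}=-\textbf{V}_{\text{sh}}^{\perp}$, integrating by parts in the term with $(z+1)\,\partial_{z}\bm{\omega}_{h}^{\perp}$ (whose boundary contribution cancels the Leibniz boundary term), and using $\bm{\omega}_{h}^{\perp}=-\partial_{z}\Vastsh$. The divergence-plus-stretching pieces $-\epsilon(\nabla^{\gamma}\cdot\overline{\textbf{V}})\textbf{V}_{\text{sh}}-\epsilon(\textbf{V}_{\text{sh}}^{\perp}\cdot\nabla^{\gamma})\overline{\textbf{V}}^{\perp}-\epsilon(\nabla^{\gamma\perp}\cdot\overline{\textbf{V}})\textbf{V}_{\text{sh}}^{\perp}$ collapse, via the two-dimensional identity $(\nabla^{\gamma}\cdot\textbf{B})\textbf{A}+(\textbf{A}^{\perp}\cdot\nabla^{\gamma})\textbf{B}^{\perp}+(\nabla^{\gamma\perp}\cdot\textbf{B})\textbf{A}^{\perp}=(\textbf{A}\cdot\nabla^{\gamma})\textbf{B}$, into $-\epsilon\,\textbf{V}_{\text{sh}}\cdot\nabla^{\gamma}\overline{\textbf{V}}$. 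Collecting everything yields the $\textbf{V}_{\text{sh}}$-equation $\partial_{t}\textbf{V}_{\text{sh}}+\epsilon\overline{\textbf{V}}\cdot\nabla^{\gamma}\textbf{V}_{\text{sh}}+\epsilon\textbf{V}_{\text{sh}}\cdot\nabla^{\gamma}\overline{\textbf{V}}-\epsilon[1+z](\nabla^{\gamma}\cdot\overline{\textbf{V}})\partial_{z}\Vastsh+\tfrac{\epsilon}{\text{Ro}\sqrt{\mu}}(\textbf{V}-\underline{\textbf{V}})^{\perp}=\mathcal{O}(\mu^{3/2})$.

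Finally I take the vertical average to get the $\textbf{Q}$-equation and subtract to get the $\Vastsh$-equation. The one point needing care is that $\overline{\partial_{t}(\cdot)}\neq\partial_{t}\overline{(\cdot)}$ because of the moving upper limit: with $\partial_{t}\zeta=-\nabla^{\gamma}\cdot(h\overline{\textbf{V}})$ and $\underline{\textbf{V}_{\text{sh}}}=0$ one gets $\overline{\partial_{t}\textbf{V}_{\text{sh}}}=\partial_{t}\textbf{Q}-\epsilon(\nabla^{\gamma}\cdot\overline{\textbf{V}})\textbf{Q}+\mathcal{O}(\mu^{2})$, and likewise $\overline{\nabla^{\gamma}\textbf{V}_{\text{sh}}}=\nabla^{\gamma}\textbf{Q}+\mathcal{O}(\epsilon)$; together with $\overline{\Vastsh}=0$ and the identity $\overline{[1+z]\partial_{z}\Vastsh}=\underline{\Vastsh}=-\textbf{Q}$ (integration by parts and $\underline{\textbf{V}_{\text{sh}}}=0$), the two $\epsilon(\nabla^{\gamma}\cdot\overline{\textbf{V}})\textbf{Q}$ contributions cancel and the stated $\textbf{Q}$-equation emerges; subtracting it from the $\textbf{V}_{\text{sh}}$-equation gives the $\Vastsh$-equation. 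I expect the main obstacle to be the order-bookkeeping in the middle step together with the $\overline{\partial_{t}}\neq\partial_{t}\overline{(\cdot)}$ correction and the vector identity above; the Coriolis term, by contrast, is handled by an exact integration and causes no trouble.
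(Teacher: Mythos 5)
Your proposal follows essentially the same route as the paper: take the horizontal component of the vorticity equation from \eqref{castro_lannes_formulation}, replace $\textbf{V}$, $\text{w}$, $\bm{\omega}_{z}$ and $\partial_{z}\textbf{V}$ by their leading-order expressions from Propositions \ref{mean_eq} and \ref{equation_V}, integrate in $z$ to obtain the evolution equation for $\textbf{V}_{\text{sh}}$, then average to get the $\textbf{Q}$-equation and subtract to get the $\Vastsh$-equation. The extra detail you supply (the Leibniz boundary terms, the $\overline{\partial_{t}(\cdot)}\neq\partial_{t}\overline{(\cdot)}$ correction and the two-dimensional vector identity) is exactly the computation the paper delegates to the cited reference, and your bookkeeping is correct.
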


\begin{proof}
\noindent This proof is an adaptation of Part 2.3 in \cite{Castro_Lannes_shallow_water} and Part 2.2 in \cite{my_long_wave_corio}. Thanks to the horizontal component of the vorticity equation of the Castro-Lannes formulation \eqref{castro_lannes_formulation}, we get

\begin{equation*}
\partial_{t} \bm{\omega}_{h} + \epsilon \textbf{V} \cdot \nabla^{\gamma} \bm{\omega}_{h} + \frac{\epsilon}{\mu} \text{w} \partial_{z}  \bm{\omega}_{h} = \epsilon \bm{\omega}_{h} \cdot \nabla^{\gamma} \textbf{V} + \frac{\epsilon}{\sqrt{\mu}} \bm{\omega}_{z} \partial_{z} \textbf{V} + \frac{\epsilon}{\text{Ro} \sqrt{\mu}} \partial_{z} \textbf{V}.
\end{equation*}

\noindent Furthermore, since $\text{curl}^{\mu, \gamma} \; \textbf{U}^{\mu} = \mu \bm{\omega}$, we have

\begin{equation*}
\partial_{z} \textbf{V} = - \sqrt{\mu} \bm{\omega}_{h}^{\perp} + \mathcal{O} \left( \mu \right) \text{   and   } \bm{\omega}_{z} = \nabla^{\gamma \perp} \cdot \overline{\textbf{V}} + \mathcal{O} \left( \sqrt{\mu} \right).
\end{equation*}

\noindent Then, using Proposition \ref{equation_V}, we obtain

\small
\begin{equation*}
\partial_{t} \bm{\omega}_{h} + \epsilon \overline{\textbf{V}} \cdot \nabla^{\gamma} \bm{\omega}_{h} - \epsilon \left[ 1 + z \right] \left( \nabla^{\gamma} \cdot \overline{\textbf{V}} \right) \partial_{z}  \bm{\omega}_{h} - \epsilon \bm{\omega}_{h} \cdot \nabla^{\gamma} \overline{\textbf{V}} - \epsilon \left( \nabla^{\gamma \perp} \overline{\textbf{V}} \right) \bm{\omega}_{h}^{\perp} - \frac{\epsilon}{\text{Ro} \sqrt{\mu}} \partial_{z} \textbf{V} =  \mathcal{O} \left( \mu^{\frac{3}{2}} \right),
\end{equation*}
\normalsize

\noindent Then, integrating with respect to $z$, using the fact that $\partial_{t} \zeta + \nabla^{\gamma}  \cdot \left( h \overline{\textbf{V}} \right) = 0$, $\textbf{V}_{\text{sh}} = \int_{z}^{\epsilon \zeta} \bm{\omega}_{h}^{\perp}$ and $\textbf{Q}_{x} = \overline{\textbf{V}_{\text{sh}}^{\ast}}$ we get (see the computations in Part 2.3 in \cite{Castro_Lannes_shallow_water})

\begin{equation*}
\partial_{t} \textbf{V}_{\text{sh}} + \epsilon \overline{\textbf{V}} \cdot \nabla^{\gamma} \textbf{V}_{\text{sh}} + \epsilon \textbf{V}_{\text{sh}} \cdot \nabla^{\gamma} \overline{\textbf{V}} + \frac{\epsilon}{\text{Ro} \sqrt{\mu}} \left( \textbf{V} - \underline{\textbf{V}} \right)^{\perp} = \epsilon \left[1+z \right] \left( \nabla^{\gamma} \cdot \overline{\textbf{V}} \right) \partial_{z}  \textbf{V}_{\text{sh}} + \mathcal{O} \left( \mu^{\frac{3}{2}} \right).
\end{equation*}

\noindent and

\begin{equation*}
\partial_{t} \textbf{Q} + \epsilon \overline{\textbf{V}} \cdot \nabla^{\gamma} \textbf{Q} + \epsilon \textbf{Q} \cdot \nabla^{\gamma} \overline{\textbf{V}} + \frac{\epsilon}{\text{Ro} \sqrt{\mu}} \left( \overline{\textbf{V}} - \underline{\textbf{V}} \right)^{\perp} = \mathcal{O} \left( \mu^{\frac{3}{2}} \right).
\end{equation*}

\noindent Finally, the second equation follows from the fact that $\Vastsh = \textbf{V}_{\text{sh}} - \textbf{Q}$.
\end{proof}

\subsection{Full justification of the Boussinesq-Coriolis equations}\label{justification_BC}

\noindent We can now establish the Boussinesq-Coriolis equations under a weak Coriolis forcing. The Boussinesq-Coriolis equations \eqref{boussinesq_eq} are the following system

\begin{equation*}
\left\{
\begin{aligned}
&\partial_{t} \zeta + \nabla^{\gamma} \cdot h \overline{\textbf{V}} = 0,\\
&\left(1- \frac{\mu}{3} \nabla^{\gamma} \nabla^{\gamma} \cdot \right) \partial_{t} \overline{\textbf{V}} + \nabla^{\gamma} \zeta + \epsilon \overline{\textbf{V}} \cdot \nabla^{\gamma} \overline{\textbf{V}} + \frac{\epsilon}{\text{Ro}} \overline{\textbf{V}}^{\perp} = 0.
\end{aligned}
\right.
\end{equation*}

\noindent First, we show a consistency result.

\begin{prop}\label{constit_boussi}
\noindent In the  Boussinesq regime $\mathcal{A}_{Boussi}$, the Castro-Lannes equations \eqref{castro_lannes_formulation} are consistent at order $\mathcal{O}(\mu^{2})$ with the Boussinesq-Coriolis equations \eqref{boussinesq_eq} in the sense of Definition \ref{consistence}.
\end{prop}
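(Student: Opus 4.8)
The plan is to check the two equations of \eqref{boussinesq_eq} one at a time. The first one is immediate: combining the first equation of \eqref{castro_lannes_formulation} with Proposition \ref{mean_eq} gives $\partial_{t}\zeta + \nabla^{\gamma} \cdot \left( h \overline{\textbf{V}} \right) = 0$ \emph{exactly}, so the first equation of \eqref{boussinesq_eq} holds with a zero residual. All the work goes into the second equation, the strategy being to plug into the second equation of \eqref{castro_lannes_formulation} the expansions of $\Umus$, $\underline{\textbf{V}}$, $\underline{\text{w}}$ and $\partial_{t}\Umus$ furnished by Propositions \ref{equation_V} and \ref{equation_partial_t_V}, and then to dispose of the surviving shear-velocity contributions (of orders $\sqrt{\mu}$ and $\mu^{3/2}$) by means of Proposition \ref{eq_Q}.

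First I would discard the harmless terms. Since $\underline{\text{w}} = \mathcal{O}(\mu)$ by Proposition \ref{equation_V}, the term $\frac{\epsilon}{2\mu}\nabla^{\gamma}\big[(1 + \epsilon^{2}\mu\lvert\nabla^{\gamma}\zeta\rvert^{2})\underline{\text{w}}^{2}\big]$ is $\mathcal{O}(\epsilon\mu) = \mathcal{O}(\mu^{2})$; and since $\Umus = \underline{\textbf{V}} + \epsilon\underline{\text{w}}\nabla^{\gamma}\zeta$ with $\epsilon\underline{\text{w}} = \mathcal{O}(\mu^{2})$, one has $\partial_{t}\Umus = \partial_{t}\underline{\textbf{V}} + \mathcal{O}(\mu^{2})$ (using the third identity of Proposition \ref{equation_partial_t_V} and Remark \ref{control_remainder} to bound the time derivatives). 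Then the second identity of Proposition \ref{equation_partial_t_V} gives
\begin{equation*}
\partial_{t}\Umus = \partial_{t}\overline{\textbf{V}} - \sqrt{\mu}\,\partial_{t}\textbf{Q} + \mu\,\partial_{t}\underline{T^{\ast}\overline{\textbf{V}}} - \mu^{\frac{3}{2}}\,\partial_{t}\overline{T\Vastsh} + \mathcal{O}(\mu^{2}).
\end{equation*}
From $\underline{T^{\ast}\overline{\textbf{V}}} = -\frac{h^{2}}{3}\nabla^{\gamma}\nabla^{\gamma}\cdot\overline{\textbf{V}}$ together with $h = 1 + \mathcal{O}(\epsilon)$ and $\partial_{t}h = \mathcal{O}(\epsilon)$ (the latter from the first equation), the third term collapses to $-\frac{\mu}{3}\nabla^{\gamma}\nabla^{\gamma}\cdot\partial_{t}\overline{\textbf{V}} + \mathcal{O}(\mu^{2})$, which is the dispersive part of \eqref{boussinesq_eq}. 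The fourth term is $\mathcal{O}(\mu^{2})$ once one notes, from the second equation of Proposition \ref{eq_Q}, that $\partial_{t}\Vastsh = \mathcal{O}(\sqrt{\mu})$: its advective terms are $\mathcal{O}(\epsilon) = \mathcal{O}(\mu)$, and its Coriolis term $\frac{\epsilon}{\text{Ro}\sqrt{\mu}}(\textbf{V} - \overline{\textbf{V}})^{\perp}$ is $\mathcal{O}(\sqrt{\mu})$ because $\textbf{V} - \overline{\textbf{V}} = \mathcal{O}(\sqrt{\mu})$ and $\frac{\epsilon}{\text{Ro}} = \mathcal{O}(\sqrt{\mu})$; hence $\partial_{t}\overline{T\Vastsh} = \mathcal{O}(\sqrt{\mu})$.

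There remain the quadratic terms and the Coriolis term. Using $\Umus = \overline{\textbf{V}} - \sqrt{\mu}\textbf{Q} + \mathcal{O}(\mu)$ and $\underline{\textbf{V}} = \overline{\textbf{V}} - \sqrt{\mu}\textbf{Q} + \mathcal{O}(\mu)$ (Proposition \ref{equation_V}) and the two-dimensional vector-calculus identity relating $\nabla^{\gamma}\lvert\textbf{W}\rvert^{2}$, $(\nabla^{\gamma\perp}\cdot\textbf{W})\textbf{W}^{\perp}$ and $(\textbf{W}\cdot\nabla^{\gamma})\textbf{W}$ (and its bilinear version), the combination $\tfrac{\epsilon}{2}\nabla^{\gamma}\lvert\Umus\rvert^{2} + \epsilon(\nabla^{\gamma\perp}\cdot\Umus)\underline{\textbf{V}}^{\perp}$ becomes $\epsilon\,\overline{\textbf{V}}\cdot\nabla^{\gamma}\overline{\textbf{V}} - \epsilon\sqrt{\mu}\big[(\overline{\textbf{V}}\cdot\nabla^{\gamma})\textbf{Q} + (\textbf{Q}\cdot\nabla^{\gamma})\overline{\textbf{V}}\big] + \mathcal{O}(\mu^{2})$. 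On the other hand, the first equation of Proposition \ref{eq_Q} gives $-\sqrt{\mu}\,\partial_{t}\textbf{Q} = \epsilon\sqrt{\mu}\big[(\overline{\textbf{V}}\cdot\nabla^{\gamma})\textbf{Q} + (\textbf{Q}\cdot\nabla^{\gamma})\overline{\textbf{V}}\big] + \frac{\epsilon}{\text{Ro}}(\overline{\textbf{V}} - \underline{\textbf{V}})^{\perp} + \mathcal{O}(\mu^{2})$. Summing all the contributions, the $\epsilon\sqrt{\mu}[\,\cdots\,]$ terms cancel against each other, while $\frac{\epsilon}{\text{Ro}}(\overline{\textbf{V}} - \underline{\textbf{V}})^{\perp} + \frac{\epsilon}{\text{Ro}}\underline{\textbf{V}}^{\perp} = \frac{\epsilon}{\text{Ro}}\overline{\textbf{V}}^{\perp}$; what is left is exactly $\big(1 - \tfrac{\mu}{3}\nabla^{\gamma}\nabla^{\gamma}\cdot\big)\partial_{t}\overline{\textbf{V}} + \nabla^{\gamma}\zeta + \epsilon\,\overline{\textbf{V}}\cdot\nabla^{\gamma}\overline{\textbf{V}} + \frac{\epsilon}{\text{Ro}}\overline{\textbf{V}}^{\perp} = \mathcal{O}(\mu^{2})$, i.e. the second equation of \eqref{boussinesq_eq} up to an $\mathcal{O}(\mu^{2})$ residual.

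The delicate point — and really the only one — is this bookkeeping of the shear-velocity terms: $\sqrt{\mu}\textbf{Q}$ enters $\partial_{t}\Umus$ (through $\underline{\textbf{V}}$) as well as the quadratic and Coriolis terms, and a crude estimate would leave a residual of size $\mathcal{O}(\mu^{3/2})$, not $\mathcal{O}(\mu^{2})$. The resolution is that $\partial_{t}\textbf{Q}$ and $\partial_{t}\Vastsh$ are only $\mathcal{O}(\sqrt{\mu})$, which is precisely where the weak-Coriolis assumption $\frac{\epsilon}{\text{Ro}} = \mathcal{O}(\sqrt{\mu})$ of $\mathcal{A}_{\text{boussi}}$ is used in an essential way — so that the offending terms either cancel pairwise or drop to $\mathcal{O}(\mu^{2})$. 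Everything else is a routine matching of powers of the small parameters, Remark \ref{control_remainder} ensuring throughout that the residuals and their spatial and time derivatives remain uniformly bounded.
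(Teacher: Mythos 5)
Your proof is correct and follows essentially the same route as the paper: the first equation via Proposition \ref{mean_eq}, and for the second equation the substitution of the expansions of Propositions \ref{equation_V} and \ref{equation_partial_t_V} into the Castro--Lannes momentum equation, the rewriting of $\tfrac{\epsilon}{2}\nabla^{\gamma}\lvert\Umus\rvert^{2}$ via the vector identity so that the $\epsilon\sqrt{\mu}\,\textbf{Q}$ advection terms cancel against $-\sqrt{\mu}\,\partial_{t}\textbf{Q}$ from Proposition \ref{eq_Q}, and the weak-Coriolis assumption to push the $\mu^{3/2}\,\partial_{t}\overline{T\Vastsh}$ term to $\mathcal{O}(\mu^{2})$. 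Your write-up is in fact more explicit than the paper's about how the $\textbf{Q}$-terms and the Coriolis contributions recombine into $\frac{\epsilon}{\text{Ro}}\overline{\textbf{V}}^{\perp}$, which the paper only states as a remark at the end.
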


\begin{proof}
\noindent The first equation of the Boussinesq-Coriolis equations is always satisfied for a solution of the Castro-Lannes formulation by Proposition \ref{mean_eq}. We recall that the second equation of the Castro-Lannes formulation is 

\begin{equation*}
\partial_{t} \Umus \hspace{-0.1cm} +  \hspace{-0.1cm} \nabla^{\gamma} \zeta  \hspace{-0.1cm} +  \hspace{-0.1cm} \frac{\epsilon}{2} \nabla^{\gamma}  \hspace{-0.1cm} \left\lvert \Umus \right\rvert^{2}  \hspace{-0.1cm} -  \hspace{-0.1cm} \frac{\epsilon}{2 \mu} \nabla^{\gamma}  \hspace{-0.1cm} \left[ \left( 1 +  \epsilon^{2} \mu \left\lvert \nabla^{\gamma} \zeta\right\rvert^{2} \right) \underline{\text{w}}^{2} \right]  \hspace{-0.1cm} +  \hspace{-0.1cm} \epsilon \hspace{-0.05cm} \left(\hspace{-0.05cm} \nabla^{\perp} \hspace{-0.05cm} \cdot \hspace{-0.05cm} \Umus \hspace{-0.05cm} \right) \hspace{0.05cm} \underline{\textbf{V}}^{\perp}  \hspace{-0.1cm} +  \hspace{-0.1cm} \frac{\epsilon}{\text{Ro}} \underline{\textbf{V}}^{\perp} = 0.
\end{equation*}

\noindent Thanks to Proposition \ref{equation_V}, we know that ($\epsilon = \mathcal{O} (\mu)$)

\begin{equation*}
\Umus = \underline{\textbf{V}} + \epsilon \underline{\text{w}} \nabla^{\gamma} \zeta = \underline{\textbf{V}} + \mathcal{O} \left( \mu^{2} \right) = \overline{\textbf{V}} - \sqrt{\mu} \textbf{Q} + \mu  \underline{T^{\ast} \overline{\textbf{V}}} - \mu^{\frac{3}{2}} \overline{T \textbf{V}_{\text{sh}}^{\ast}} + \mathcal{O} \left( \mu^{2} \right),
\end{equation*}

\noindent and 

\begin{align*}
\frac{\epsilon}{2} \nabla^{\gamma} \lver \Umus \rver ^{2} \hspace{-0.1cm} &= \hspace{-0.1cm} \epsilon \Umus \cdot \nabla^{\gamma} \Umus - \epsilon \left( \nabla^{\gamma \perp} \cdot \Umus \right) \textbf{U}^{\mu,\gamma \perp}_{\! \sslash}\\
&= \hspace{-0.1cm} \epsilon \overline{\textbf{V}} \cdot \nabla^{\gamma} \overline{\textbf{V}} - \epsilon \sqrt{\mu} \textbf{Q} \cdot \nabla^{\gamma} \overline{\textbf{V}} - \epsilon \sqrt{\mu}  \overline{\textbf{V}} \cdot \nabla^{\gamma}  \textbf{Q}- \epsilon \left( \hspace{-0.05cm} \nabla^{\gamma \perp} \cdot \Umus \hspace{-0.05cm} \right) \underline{\textbf{V}}^{\perp} + \mathcal{O} \left( \mu^{2} \right) \hspace{-0.1cm}.
\end{align*}

\noindent Furthermore, thanks to Proposition \ref{eq_Q} and Proposition \ref{equation_V}, we get $\left( \frac{\epsilon}{\text{Ro}} = \mathcal{O} \left( \sqrt{\mu} \right) \right)$

\begin{equation*}
\mu^{\frac{3}{2}} \partial_{t} \overline{T \textbf{V}_{\text{sh}}^{\ast}} = \mu^{\frac{3}{2}} \overline{T \partial_{t} \textbf{V}_{\text{sh}}^{\ast}} + \mathcal{O} \left( \mu^{2} \right) = - \mu^{\frac{3}{2}} \frac{\epsilon}{\text{Ro}} \overline{T \textbf{V}_{\text{sh}}^{\ast \perp}} + \mathcal{O} \left( \mu^{2} \right) = \mathcal{O} \left( \mu^{2} \right).
\end{equation*}

\noindent Finally, using Proposition \ref{equation_V}, Proposition \ref{eq_Q}, Proposition \ref{equation_partial_t_V} and  Remark \ref{control_remainder}, we obtain from the second equation of the Castro-Lannes formulation that

\begin{equation*}
\left(1- \frac{\mu}{3} \nabla^{\gamma} \nabla^{\gamma} \cdot \right) \partial_{t} \overline{\textbf{V}} + \nabla^{\gamma} \zeta + \epsilon \overline{\textbf{V}} \cdot \nabla^{\gamma} \overline{\textbf{V}} + \frac{\epsilon}{\text{Ro}} \overline{\textbf{V}}^{\perp} = \mathcal{O} \left( \mu^{2} \right).
\end{equation*}

\noindent Notice that all the terms that involve $\textbf{Q}$ disappear (this fact was pointed out in \cite{Castro_Lannes_shallow_water} and \cite{my_coriolis}).
\end{proof}

\begin{remark}\label{term_vort_strange}
\noindent In \cite{my_long_wave_corio}, the author points out the fact that under a strong Coriolis forcing $\left( \frac{\epsilon}{\text{Ro}} \leq 1\right)$, a new term appears in the Boussinesq-Coriolis equations. We would like to emphasize that this term is not present in this setting since we only study a weak Coriolis forcing $\left( \frac{\epsilon}{\text{Ro}} = \mathcal{O} \left(\sqrt{\mu} \right) \right)$.
\end{remark}

\noindent The purpose of this part is to fully justify the Boussinesq-Coriolis equations \eqref{boussinesq_eq}. First, we give a local wellposedness result of the Boussinesq-Coriolis equations. We define the energy space

\begin{equation*}
X^{N}_{\mu}(\RR) =  H^{N}(\RR) \times H^{N}(\RR) \times H^{N}(\RR),
\end{equation*}

\noindent endowed with the norm 

\begin{equation*}
\lver (\zeta,\textbf{V}) \rver_{X^{N}_{\mu}}^{2} =  \lver \zeta \rver^{2}_{H^{N}} +  \lver \textbf{V} \rver^{2}_{H^{N}} + \mu \lver \nabla^{\gamma} \cdot \textbf{V} \rver^{2}_{H^{N}}.
\end{equation*}

\begin{prop}\label{existence_boussi}
\noindent Let $N \geq 3$ and \upshape $ \left( \zeta_{0}, \overline{\textbf{V}}_{0} \right) \in X^{N}_{\mu}(\RR)$ \itshape. Suppose that \upshape$\left( \mu,\epsilon,\gamma,\text{Ro} \right) \in \mathcal{A}_{\text{boussi}}$\itshape. Assume that

\upshape
\begin{equation*}
\exists \, h_{\min}  > 0 \text{ ,  } 1+ \epsilon \zeta _{0} \geq h_{\min}.
\end{equation*}
\itshape

\noindent Then, there exists an existence time \upshape$T > 0$ \itshape and a unique solution \upshape$\left( \zeta,\overline{\textbf{V}} \right)$ \itshape on $[0,T]$ to the Boussinesq-Coriolis equations \eqref{boussinesq_eq} with initial data \upshape$\left( \zeta_{0}, \overline{\textbf{V}}_{0} \right)$ \itshape. Moreover, \small \upshape$\left( \zeta, \overline{\textbf{V}} \right) \in \mathcal{C} \left([0,T]; X^{N}_{\mu}(\RR) \right)$ \itshape \normalsize and

\upshape
\begin{equation*}
T =  \frac{T_{0}}{\mu} \text{    ,    }  \frac{1}{T_{0}} = c^{1} \text{   and   }\underset{[0,T]}{\max} \lver \left(\zeta, \overline{\textbf{V}} \right)(t, \cdot) \rver_{X^{N}_{\mu}} = c^{2},
\end{equation*}
\itshape

\noindent with \upshape $c^{j} = C \left( \mu_{0}, \frac{1}{h_{\min}}, \lver \left( \zeta_{0}, \overline{\textbf{V}}_{0} \right) \rver_{X^{N}_{\mu}} \right)$.\itshape

\end{prop}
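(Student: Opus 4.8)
The plan is to treat the Boussinesq--Coriolis system as a quasilinear symmetrizable perturbation of a linear dispersive system and to run a standard energy--method argument, paying attention to the uniformity of the constants in $\mu$ (and $\gamma$). First I would rewrite the system in the unknowns $U = (\zeta, \overline{\textbf{V}})$ as
\begin{equation*}
\partial_t U + A[U] \partial_x U + \gamma B[U] \partial_y U + \frac{\epsilon}{\text{Ro}} C U = 0,
\end{equation*}
after applying the inverse of the elliptic operator $\mathfrak{T}_\mu := 1 - \tfrac{\mu}{3}\nabla^\gamma\nabla^\gamma\cdot$ to the momentum equation. The operator $\mathfrak{T}_\mu$ is an isomorphism from $H^{N}(\RR)^2$ onto itself (its symbol is $1+\tfrac{\mu}{3}(\xi_1^2+\gamma^2\xi_2^2)\geq 1$), with $\mathfrak{T}_\mu^{-1}$ bounded on every $H^N$ \emph{uniformly in $\mu$}; this is precisely why the $X^N_\mu$ norm, with its $\mu\lver\nabla^\gamma\cdot\textbf{V}\rver_{H^N}^2$ term, is the natural energy: $\lver(\zeta,\textbf{V})\rver_{X^N_\mu}^2$ is equivalent, uniformly in $\mu\le\mu_0$, to $\lver\zeta\rver_{H^N}^2 + (\mathfrak{T}_\mu\textbf{V},\textbf{V})_{H^N}$.

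Next I would set up the standard iteration scheme: define $U^{n+1}$ by solving the \emph{linear} system obtained by freezing the coefficients at $U^n$, with $U^0 \equiv U_0$. For the linear problem one checks well-posedness in $C([0,T];X^N_\mu)$ by energy estimates: apply $\Lambda^N = (1-\Delta)^{N/2}$ (or $\partial^\alpha$ for $|\alpha|\le N$), pair with $\Lambda^N\zeta$ and $\Lambda^N\mathfrak{T}_\mu\textbf{V}$ in $L^2$, and use the symmetry of the principal part together with integration by parts to absorb the top-order commutator terms. The key structural points are: (i) the first equation $\partial_t\zeta + \nabla^\gamma\cdot(h\overline{\textbf{V}})=0$ and the transport-type term $\epsilon\overline{\textbf{V}}\cdot\nabla^\gamma\overline{\textbf{V}}$ form a symmetrizable hyperbolic pair (standard shallow-water structure, with symmetrizer $\mathrm{diag}(1,h,h)$ up to the $\epsilon$ scaling); (ii) the Coriolis term $\tfrac{\epsilon}{\text{Ro}}\overline{\textbf{V}}^\perp$ is skew-symmetric in $L^2$, so it contributes nothing to the energy at leading order; (iii) the dispersive operator $\mathfrak{T}_\mu$ is positive and commutes with constant-coefficient derivatives, so $\tfrac{d}{dt}(\mathfrak{T}_\mu\textbf{V},\textbf{V})_{H^N}$ produces only lower-order (in derivative count) terms once the transport part is handled. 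Commutator estimates of Kato--Ponce type, valid for $N\ge 3$ so that $H^{N-1}\hookrightarrow L^\infty$ controls the coefficients, give
\begin{equation*}
\frac{d}{dt}\lver U^{n+1}\rver_{X^N_\mu}^2 \leq \mu\, C\!\left(\tfrac{1}{h_{\min}}, \lver U^n\rver_{X^N_\mu}\right)\left(1 + \lver U^{n+1}\rver_{X^N_\mu}^2\right),
\end{equation*}
where the crucial $\mu$ prefactor comes from $\epsilon = \mathcal{O}(\mu)$, $\tfrac{\epsilon}{\text{Ro}} = \mathcal{O}(\sqrt\mu)$ \emph{plus} the gain of one power of $\sqrt\mu$ obtained by integrating the $\sqrt\mu$ terms against the energy (the Coriolis term being skew-symmetric, it is really the nonlinear $\epsilon$ terms that set the time scale, giving $T = T_0/\mu$ with $1/T_0 = C(\mu_0, 1/h_{\min}, \lver U_0\rver_{X^N_\mu})$). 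One also propagates the lower bound $1+\epsilon\zeta \ge h_{\min}/2$ on $[0,T]$ by shrinking $T_0$, using $\partial_t\zeta = \mathcal{O}(\mu)$ in $L^\infty$.

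Finally I would close the scheme: the uniform bound above shows the sequence $(U^n)$ is bounded in $C([0,T];X^N_\mu)$ on a common interval; a standard contraction estimate in the lower-order norm $X^{N-1}_\mu$ (or $L^2\times L^2$ with the $\mu$-weighted piece) shows $(U^n)$ is Cauchy there, hence converges; interpolation with the uniform $X^N$ bound upgrades convergence to $C([0,T];X^{N'}_\mu)$ for $N' < N$, and weak-* limits give the solution in $L^\infty([0,T];X^N_\mu)$; the Bona--Smith regularization argument (or a direct energy argument on differences) recovers continuity in time with values in $X^N_\mu$. Uniqueness follows from the same $X^{N-1}_\mu$ stability estimate applied to two solutions. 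The main obstacle, and the only genuinely delicate point, is bookkeeping the $\mu$-dependence: one must verify that every constant produced by $\mathfrak{T}_\mu^{-1}$, by the equivalence of norms, and by the commutator estimates is bounded by $C(\mu_0,\cdot)$ independently of $\mu \in [0,\mu_0]$ and of $\gamma \le 1$, so that the existence time genuinely scales like $1/\mu$ rather than degenerating as $\mu \to 0$; this is where the precise definition of the $X^N_\mu$ norm does the work.
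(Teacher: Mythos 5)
Your proposal follows essentially the same route as the paper: the paper's proof consists precisely of the a priori energy estimate for the quasilinearly symmetrized system, using the single self-adjoint symmetrizer $S(U)=\mathrm{diag}\bigl(1,\, hI_{2}-\tfrac{\mu}{3}\nabla^{\gamma}(h\nabla^{\gamma}\cdot)\bigr)$ --- which combines your $\mathrm{diag}(1,h,h)$ and $\mathfrak{T}_{\mu}$ into one operator --- together with the skew-symmetry of the Coriolis term and the $\epsilon=\mathcal{O}(\mu)$ bookkeeping that yields $\frac{d}{dt}\mathcal{E}^{N}(U)\leq \mu\, C(\mathcal{E}^{N}(U))\,\mathcal{E}^{N}(U)$ and Gr\"onwall. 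The actual construction of the solution (iteration, contraction in a lower norm, recovery of continuity in time) is delegated to Proposition 6.7 of Lannes's book, exactly the machinery you outline.
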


\begin{proof}
\noindent This proof is a small adaptation of the proof of Proposition 6.7 in \cite{Lannes_ww}. We only give the energy estimates. We assume that $\left( \zeta, \overline{\textbf{V}} \right)$ solves \eqref{boussinesq_eq} on $\left[0,\frac{T_{0}}{\mu} \right]$ and that

\begin{equation*}
1+\epsilon \zeta \geq \frac{h_{\min}}{2} \text{   on   } \left[0,\frac{T_{0}}{\mu} \right].
\end{equation*}

\noindent We denote $U = \left(\zeta,\overline{\textbf{V}} \right)^{t}$. We introduce the symmetric matrix operator

\begin{equation*}
S(U) = \begin{pmatrix} 1 & 0  \\ 0 & h I_{2} - \mu \frac{1}{3} \nabla^{\gamma} \left( h \nabla^{\gamma} \cdot \right) \end{pmatrix}
\end{equation*}
 
\noindent and the associated energy

\begin{equation*}
\mathcal{E}^{N}(U) = \frac{1}{2} \underset{\lver \alpha \rver \leq N}{\sum} \left(S(U) \partial^{\alpha} U, \partial^{\alpha} U \right)_{2}.
\end{equation*}

\noindent Remark that there exists $c_{1},c_{2} = C \left( \frac{1}{h_{\min}}, \lver h \rver_{L^{\infty}} \right)$ such that

\begin{equation*}
c_{1} \lver \nabla^{\gamma} \cdot \textbf{V} \rver^{2}_{2} \leq \left( - \frac{1}{3} \nabla^{\gamma}  \left(h \nabla^{\gamma} \cdot \overline{\textbf{V}} \right) , \overline{\textbf{V}} \right)_{2}  \leq c_{2} \lver \nabla^{\gamma} \cdot \textbf{V} \rver^{2}_{2}.
\end{equation*}

\noindent We also notice that for $|\alpha| = N$,

\begin{equation*}
\frac{d}{dt} \left( S(U) \partial^{\alpha} U \right) = \begin{pmatrix} \partial^{\alpha} \partial_{t} \zeta \\ h \left( 1 - \frac{\mu}{3} \nabla^{\gamma} \nabla^{\gamma} \cdot \right) \partial^{\alpha} \partial_{t} \overline{\textbf{V}} \end{pmatrix} - \begin{pmatrix} 0 \\ \epsilon \frac{\mu}{3}  \left( \nabla^{\gamma} \cdot \partial^{\alpha} \partial_{t} \overline{\textbf{V}} \right) \nabla^{\gamma} \zeta \end{pmatrix} + \epsilon \; l.o.t.
\end{equation*}

\noindent and that, denoting $\Delta^{\gamma} = \nabla^{\gamma} \cdot \nabla^{\gamma}$,

\small
\begin{equation*}
\mu \lver \nabla^{\gamma} \cdot \partial_{t} \overline{\textbf{V}} \rver_{H^{N}} \leq \lver \left(1 - \frac{\mu}{3} \Delta^{\gamma} \right)^{-1} \mu \nabla^{\gamma} \cdot \left( \nabla^{\gamma} \zeta + \epsilon \overline{\textbf{V}} \cdot \nabla^{\gamma} \overline{\textbf{V}} + \frac{\epsilon}{\text{Ro}} \overline{\textbf{V}}^{\perp} \right) \rver_{H^{N}} \leq C \left( \mu_{0}, \mathcal{E}^{N}(U) \right).
\end{equation*}
\normalsize

\noindent Then, after some computations we obtain ($\epsilon = \mathcal{O} \left( \mu \right)$)

\begin{equation*}
\frac{d}{dt} \mathcal{E}^{N}(U) \leq \mu C \left(\mathcal{E}^{N}(U) \right) \mathcal{E}^{N}(U)
\end{equation*}

\noindent and the result follows from Gr\"onwall's inequality.
\end{proof}

\noindent We also have a stability result for the Boussinesq-Coriolis equations \eqref{boussinesq_eq}.

\begin{prop}\label{stability_boussi}
\noindent Let the assumptions of Proposition \ref{existence_boussi} be satisfied. Suppose that there exists  \upshape$\left( \tilde{\zeta},  \tilde{\textbf{V}} \right) \in \mathcal{C} \left(\left[0,\frac{T_{0}}{\mu} \right]; X^{N}_{\mu}(\RR) \right)$ \itshape satisfying

\upshape
\begin{equation*}
\left\{
\begin{aligned}
&\partial_{t} \tilde{\zeta} + \nabla^{\gamma} \cdot \tilde{h} \tilde{\textbf{V}} = R_{1},\\
&\left(1- \frac{\mu}{3} \nabla^{\gamma} \nabla^{\gamma} \cdot \right) \partial_{t} \tilde{\textbf{V}} + \nabla^{\gamma} \tilde{\zeta} + \epsilon \tilde{\textbf{V}} \cdot \nabla^{\gamma} \tilde{\textbf{V}} + \frac{\epsilon}{\text{Ro}} \tilde{\textbf{V}}^{\perp} = R_{2}.
\end{aligned}
\right.
\end{equation*}
\itshape

\noindent where $\tilde{h} = 1 + \epsilon \tilde{\zeta}$ and with $R = (R_{1}, R_{2}) \in L^{\infty} \left( \left[0, \frac{T_{0}}{\mu} \right] ;H^{N-1}(\RR) \right)$. Then, if we denote \upshape $\mathfrak{e} = \left( \zeta, \overline{\textbf{V}} \right) - \left( \tilde{\zeta}, \tilde{\textbf{V}} \right)$ \itshape where \upshape $U = \left( \zeta, \textbf{V} \right)$ \itshape is the solution given in Proposition \ref{existence_boussi}, we have

\upshape
\begin{equation*}
\lver \mathfrak{e}(t) \rver_{X^{N-1}_{\mu}} \leq c_{1} \left(\lver \mathfrak{e}_{|t=0} \rver_{X^{N-1}_{\mu}} + t \lver R \rver_{L^{\infty} \left( \left[0, \frac{T_{0}}{\mu} \right] ; H^{N-1} \right)} \right),
\end{equation*}
\normalsize

\noindent where 

\begin{equation*}
c_{1} = C \left(\mu_{0}, \frac{1}{h_{\min}}, \lver \left( \zeta_{0}, \overline{\textbf{V}}_{0} \right) \rver_{X^{N}_{\mu}}, \lver \left( \tilde{\zeta}, \tilde{\textbf{V}} \right) \rver_{L^{\infty} \left( \left[0, \frac{T_{0}}{\mu} \right] ; X^{N}_{\mu} \right)}, \lver R \rver_{L^{\infty} \left( \left[0, \frac{T_{0}}{\mu} \right] ; H^{N-1} \right)} \right).
\end{equation*}

\end{prop}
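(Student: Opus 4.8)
The plan is to derive an energy estimate for the difference $\mathfrak{e}=(\zeta-\tilde\zeta,\overline{\textbf{V}}-\tilde{\textbf{V}})=:(\mathfrak{e}_1,\mathfrak{e}_2)$ using exactly the symmetrizer $S(U)$ introduced in the proof of Proposition \ref{existence_boussi}, and then close with Gr\"onwall. First I would subtract the two systems: $(\zeta,\overline{\textbf{V}})$ solves \eqref{boussinesq_eq} exactly while $(\tilde\zeta,\tilde{\textbf{V}})$ solves the same system up to $R=(R_1,R_2)$. Writing the quadratic terms as $h\overline{\textbf{V}}-\tilde h\tilde{\textbf{V}}=h\mathfrak{e}_2+\epsilon\mathfrak{e}_1\tilde{\textbf{V}}$ and $\overline{\textbf{V}}\cdot\nabla^\gamma\overline{\textbf{V}}-\tilde{\textbf{V}}\cdot\nabla^\gamma\tilde{\textbf{V}}=\overline{\textbf{V}}\cdot\nabla^\gamma\mathfrak{e}_2+\mathfrak{e}_2\cdot\nabla^\gamma\tilde{\textbf{V}}$, one obtains for $\mathfrak{e}$ a linear system with the same principal part as \eqref{boussinesq_eq}, variable coefficients built from $(\zeta,\overline{\textbf{V}})$ and $(\tilde\zeta,\tilde{\textbf{V}})$ (hence controlled in $X^N_\mu$ by hypothesis), a zeroth-order Coriolis term $\frac{\epsilon}{\text{Ro}}\mathfrak{e}_2^\perp$ (harmless, antisymmetric after pairing), plus the source $-R$ on the right-hand side.

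Next I would run the $X^{N-1}_\mu$ energy estimate. For each multi-index $|\alpha|\le N-1$, apply $\partial^\alpha$, pair with $S(U_\ast)\partial^\alpha\mathfrak{e}$ for a suitable reference state $U_\ast$ (one may symmetrize against $\tilde h$ or $h$; following Lannes it is cleanest to use the coefficients of the \emph{solution} $U=(\zeta,\overline{\textbf{V}})$), and use that $S(U)$ is coercive on $X^{N-1}_\mu$ thanks to $1+\epsilon\zeta\ge h_{\min}/2$, exactly the two-sided bound already recorded in the proof of Proposition \ref{existence_boussi}. The skew-symmetric coupling $\nabla^\gamma\mathfrak{e}_1$ versus $\nabla^\gamma\cdot(h\mathfrak{e}_2)$ is handled by integration by parts as in that proof; commutators $[\partial^\alpha,h]$, $[\partial^\alpha,\overline{\textbf{V}}\cdot\nabla^\gamma]$ and the quadratic remainders are Moser-type products in $H^{N-1}$ (here $N-1\ge 2$, so $H^{N-1}$ is an algebra), each bounded by $C(\mu_0,\tfrac1{h_{\min}},\|U\|_{X^N_\mu},\|\tilde U\|_{X^N_\mu})\,\|\mathfrak{e}\|_{X^{N-1}_\mu}^2$. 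The source contributes $(S\partial^\alpha\mathfrak{e},\partial^\alpha R)_2\lesssim \|\mathfrak{e}\|_{X^{N-1}_\mu}\|R\|_{H^{N-1}}$. As in Proposition \ref{existence_boussi}, the time derivative of the symmetrizer, $\tfrac{d}{dt}S(U)$, produces only $\epsilon=\mathcal{O}(\mu)$ terms controlled by $\partial_t\zeta$ and $\partial_t\overline{\textbf{V}}$, themselves bounded via the equation. Altogether $\tfrac{d}{dt}\mathcal{E}^{N-1}(\mathfrak{e})\le C\big(\mathcal{E}^{N-1}(\mathfrak{e})+\|R\|_{H^{N-1}}^2\big)$ with $C$ of the stated form (note: no smallness in $\mu$ is claimed here, only boundedness, so the constant need not carry a factor $\mu$).

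Finally, Gr\"onwall's lemma applied to $y(t)=\mathcal{E}^{N-1}(\mathfrak{e})(t)$ on $[0,T_0/\mu]$ gives $\mathcal{E}^{N-1}(\mathfrak{e})(t)\le e^{Ct}\big(\mathcal{E}^{N-1}(\mathfrak{e})(0)+t\,\|R\|_{L^\infty H^{N-1}}^2\big)$; taking square roots and using the norm equivalence $\mathcal{E}^{N-1}(\mathfrak{e})\simeq\|\mathfrak{e}\|_{X^{N-1}_\mu}^2$ yields $\|\mathfrak{e}(t)\|_{X^{N-1}_\mu}\le c_1\big(\|\mathfrak{e}_{|t=0}\|_{X^{N-1}_\mu}+t\,\|R\|_{L^\infty H^{N-1}}\big)$, absorbing the $e^{C T_0/\mu}$ factor into $c_1$ (which may then depend on the various norms, as stated). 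The main obstacle, as usual in Boussinesq-type stability estimates, is the non-local symmetrizer: one must be careful that the top-order term $-\tfrac\mu3\nabla^\gamma(h\nabla^\gamma\cdot)$ in $S(U)$ when differentiated or when commuted with $\partial^\alpha$ does not generate a term that cannot be paired back into $\mathcal{E}^{N-1}$ — this is precisely where the $\mu\|\nabla^\gamma\cdot\mathfrak{e}_2\|^2_{H^{N-1}}$ piece of the $X^{N-1}_\mu$ norm is needed and where one reuses the structure identified in Proposition \ref{existence_boussi} (the explicit form of $\tfrac{d}{dt}(S(U)\partial^\alpha U)$ and the bound on $\mu\|\nabla^\gamma\cdot\partial_t\overline{\textbf{V}}\|_{H^N}$).
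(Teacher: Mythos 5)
Your overall strategy --- symmetrize with $S(U)$, write the equation satisfied by $\mathfrak{e}$ with the coefficient differences and $R$ as a source, run the $X^{N-1}_\mu$ energy estimate and conclude by Gr\"onwall --- is exactly the paper's route. But there is a genuine gap at the last step, and it is precisely the point the proposition is designed around. You assert that \emph{``no smallness in $\mu$ is claimed here, only boundedness, so the constant need not carry a factor $\mu$''} and then propose to absorb the resulting factor $e^{CT_0/\mu}$ into $c_1$. This cannot work: the stated $c_1$ depends on $\mu$ only through $\mu_0$ and must be bounded uniformly as $\mu\to 0$, whereas $e^{CT_0/\mu}$ blows up; moreover an exponential prefactor on times $t\sim 1/\mu$ destroys the linear-in-$t$ structure $\lver \mathfrak{e}_{|t=0}\rver + t\lver R\rver$ that is needed downstream (Theorems \ref{full_just_RKP} and \ref{full_just_KP} require the error to be $\mathcal{O}(\mu^{3/2}t)$ on $[0,T_0/\mu]$).

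The missing observation is that every term multiplying $\lver\mathfrak{e}\rver_{X^{N-1}_\mu}^2$ in the energy inequality carries a factor $\mu$: the transport and quadratic terms come with an explicit $\epsilon=\mathcal{O}(\mu)$; the coefficient differences satisfy $h-\tilde h=\epsilon\,\mathfrak{e}_1$, so $(\mathcal{A}_j(U)-\mathcal{A}_j(\tilde U))\tilde U=\mathcal{O}(\epsilon)\mathfrak{e}$; the symmetrizer satisfies $\frac{d}{dt}S(U)=\mathcal{O}(\epsilon)$ since $S$ depends on $\zeta$ only through $h=1+\epsilon\zeta$; and the Coriolis term is antisymmetric at top order with commutators again of size $\epsilon$. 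This yields the differential inequality of the paper,
\begin{equation*}
\frac{d}{dt}\mathcal{F}^{N-1}(\mathfrak{e}) \leq \left( \lver R \rver_{H^{N-1}} + \mu\, C\, \lver \mathfrak{e} \rver_{X^{N-1}_{\mu}} \right) \lver \mathfrak{e} \rver_{X^{N-1}_{\mu}},
\end{equation*}
whose Gr\"onwall factor $e^{\mu C t}\leq e^{CT_{0}}$ is uniformly bounded on $[0,T_0/\mu]$ and whose source integrates to $t\lver R\rver_{L^\infty H^{N-1}}$. (Relatedly, you should keep the inequality in the form above rather than Young it into $C(\mathcal{E}+\lver R\rver^2)$, so that taking $\frac{d}{dt}\sqrt{\mathcal{F}^{N-1}}$ produces the linear-in-$t$ bound directly.) Without tracking these $\mu$ factors the claimed estimate, with $c_1$ of the stated form, does not follow.
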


\begin{proof}
\noindent We proceed as in Proposition \ref{existence_boussi}. We define the energy

\begin{align*}
\mathcal{F}^{N-1}(\mathfrak{e}) =  \frac{1}{2} \underset{\lver \alpha \rver \leq N-1}{\sum} \left(S(U) \partial^{\alpha} \mathfrak{e}, \partial^{\alpha} \mathfrak{e} \right)_{2}.
\end{align*}

\noindent After some computations, we get

\small
\begin{equation*}
\frac{d}{dt} \mathcal{F}^{N-1}(\mathfrak{e}) \leq \left( \lver R \rver_{H^{N-1}} + \mu C \left(\mu_{0}, \frac{1}{h_{\min}}, \lver U \rver_{X^{N}_{\mu}}, \lver \left( \tilde{\zeta}, \tilde{\textbf{V}} \right) \rver_{X^{N}_{\mu}}, \lver R \rver_{H^{N-1}} \right) \lver \mathfrak{e} \rver_{X^{N-1}_{\mu}} \right) \lver \mathfrak{e} \rver_{X^{N-1}_{\mu}}.
\end{equation*}
\normalsize

\noindent Then the result follows from Gronwall's inequality.
\end{proof}

\noindent We can now rigorously justify the Boussinesq-Coriolis equations. We recall that the operator $\overline{\textbf{V}} [\epsilon \zeta](\Umus,\bm{\omega})$ is defined in \eqref{V_average}.

\begin{thm}\label{boussi_water_wave_compare}
\noindent Let \upshape$\textbf{N} \geq 12$ and \upshape$\left(\mu, \epsilon,\gamma,\text{Ro} \right) \in \mathcal{A}_{Boussi}$ \itshape. We assume that we are under the assumptions of Theorem \ref{existence_ww}. We define the following quantities

\upshape 
\begin{equation*}
\overline{\textbf{V}}_{0} = \overline{\textbf{V}} [\epsilon \zeta_{0}]((\Umus)_{0},\bm{\omega}_{0}) \text{ ,  } \overline{\textbf{V}}  = \overline{\textbf{V}} [\epsilon \zeta](\Umus,\bm{\omega}).
\end{equation*}
\itshape 

\noindent Then, there exists a time $T > 0$ such that

\medskip

\noindent (i) $T$ has the form

\begin{equation*}
T = \frac{T_{0}}{\max \left(\mu, \frac{\epsilon}{\text{Ro}} \right)} ,\text{ and }  \frac{1}{T_{0}} = c^{1}.
\end{equation*}
\itshape

\medskip

\noindent (ii) There exists a unique classical solution \small \upshape$\left( \zeta_{B}, \overline{\textbf{V}}_{B} \right)$ \itshape \normalsize of \eqref{boussinesq_eq} with the initial data \upshape$\left(\zeta_{0}, \overline{\textbf{V}}_{0} \right)$ \itshape  on $\left[ 0,T \right]$.

\medskip

\noindent (iii)  There exists a unique classical solution \upshape$\left(\zeta, \Umus, \bm{\omega} \right)$ \itshape of System \eqref{castro_lannes_formulation} with initial data \upshape$\left(\zeta_{0}, (\Umus)_{0}, \bm{\omega}_{0} \right)$ \itshape on $\left[ 0,T \right]$.

\medskip

\noindent (iv) The following error estimate holds, for $0 \leq t \leq T$,

\upshape
\begin{equation*}
\lver \left(\zeta, \overline{\textbf{V}} \right) - \left(\zeta_{B},\overline{\textbf{V}}_{B} \right) \rver_{L^{\infty}([0,t] \times \RR)} \leq \mu^{2} t\, c^{2},
\end{equation*}
\itshape

\noindent with \upshape $c^{j} = C \left(A, \mu_{0}, \frac{1}{h_{\min}}, \frac{1}{\mathfrak{a}_{\min}} \right)$.\itshape
\end{thm}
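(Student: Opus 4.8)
The strategy is the classical three-step scheme for the rigorous justification of asymptotic models: (1) a consistency result, (2) a wellposedness result for the limit model, and (3) a stability/convergence estimate. Steps (1) and (2) are already in hand: Proposition \ref{constit_boussi} shows the Castro-Lannes solution solves the Boussinesq-Coriolis system \eqref{boussinesq_eq} up to a residual $R = (R_1, R_2)$ of size $\mathcal{O}(\mu^2)$, and Proposition \ref{existence_boussi} provides the solution $(\zeta_B, \overline{\textbf{V}}_B)$ on a time interval of length $T_0/\mu$. So the main work is to combine these with the stability estimate of Proposition \ref{stability_boussi} and to bookkeep the time scales correctly.

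\textbf{Step 1: set up the two solutions on a common interval.} First I would invoke Theorem \ref{existence_ww} to get the Castro-Lannes solution $(\zeta, \Umus, \bm{\omega})$ on $[0, T_{\text{CL}}]$ with $T_{\text{CL}} = T_0'/\max(\epsilon, \epsilon/\text{Ro})$; since $\epsilon = \mathcal{O}(\mu)$ and $\epsilon/\text{Ro} = \mathcal{O}(\sqrt\mu)$, we have $\max(\epsilon, \epsilon/\text{Ro}) = \mathcal{O}(\sqrt\mu) \geq \mathcal{O}(\mu)$, so in particular this interval contains one of the form $[0, T_0''/\max(\mu, \epsilon/\text{Ro})]$. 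Then I would check that $\overline{\textbf{V}}_0 = \overline{\textbf{V}}[\epsilon\zeta_0]((\Umus)_0, \bm{\omega}_0)$ together with $\zeta_0$ satisfies the hypotheses of Proposition \ref{existence_boussi} — the lower bound $1 + \epsilon\zeta_0 \geq h_{\min}$ is assumed, and $(\zeta_0, \overline{\textbf{V}}_0) \in X^N_\mu(\RR)$ follows from the regularity of the initial data in Theorem \ref{existence_ww} (here the requirement $\textbf{N} \geq 12$ leaves plenty of room, since we need $N \geq 3$ for the Boussinesq wellposedness and we lose derivatives passing from $\textbf{U}$ to $\overline{\textbf{V}}$ and in the stability estimate). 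This produces $(\zeta_B, \overline{\textbf{V}}_B)$ on $[0, T_0'''/\mu]$. Taking $T = T_0/\max(\mu, \epsilon/\text{Ro})$ with $T_0$ the minimum of the relevant constants gives (i), (ii), (iii), all of which are $C(A, \mu_0, 1/h_{\min}, 1/\mathfrak{a}_{\min})$ by Remark \ref{control_remainder} and Proposition \ref{existence_boussi}.

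\textbf{Step 2: apply stability.} By Proposition \ref{constit_boussi} (consistency), the pair $(\zeta, \overline{\textbf{V}})$ plays the role of $(\tilde\zeta, \tilde{\textbf{V}})$ in Proposition \ref{stability_boussi} with residual $R$ satisfying $\lVert R \rVert_{L^\infty([0,T]; H^{N-1})} \leq \mu^2 c$, where the bound uses the uniform control of $\zeta, \Umus, \bm{\omega}, \overline{\textbf{V}}, \textbf{U}$ and their time derivatives from Remark \ref{control_remainder} — one must verify that the $\mathcal{O}(\mu^2)$ residuals produced in the proof of Proposition \ref{constit_boussi} are in fact $\mathcal{O}(\mu^2)$ in a sufficiently strong Sobolev norm, which is why one needs enough regularity on the water-waves solution (hence $\textbf{N} \geq 12$). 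Since $(\zeta_B, \overline{\textbf{V}}_B)$ is the exact solution of \eqref{boussinesq_eq} with the same initial data, $\mathfrak{e}_{|t=0} = 0$, and Proposition \ref{stability_boussi} gives
\begin{equation*}
\lver (\zeta, \overline{\textbf{V}}) - (\zeta_B, \overline{\textbf{V}}_B) \rver_{X^{N-1}_\mu} \leq c_1 \, t \, \lver R \rver_{L^\infty([0,T]; H^{N-1})} \leq \mu^2 \, t \, c^2.
\end{equation*}
Finally, the Sobolev embedding $H^{N-1}(\RR) \hookrightarrow L^\infty(\RR)$ (valid since $N - 1 \geq 2$) converts the $X^{N-1}_\mu$ estimate into the $L^\infty([0,t]\times\RR)$ estimate of (iv), which completes the proof.

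\textbf{Main obstacle.} The genuinely delicate point is Step 2's verification that the consistency residual $R$ is controlled in $H^{N-1}$ uniformly in the small parameters: the expansions in Propositions \ref{equation_V}, \ref{equation_partial_t_V}, \ref{eq_Q} are stated with $\mathcal{O}(\mu^2)$ (resp. $\mathcal{O}(\mu^{3/2})$) remainders, and one must track that these hold in high Sobolev norm with constants of the form $C(A, \mu_0, 1/h_{\min}, 1/\mathfrak{a}_{\min})$ — this is where the regularity threshold $\textbf{N} \geq 12$ is consumed (each expansion costs derivatives, the operator $T^\ast$ costs two, and the stability estimate costs one more), and where Remark \ref{control_remainder} is essential. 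Everything else is bookkeeping of time scales and a direct citation of the three preceding propositions.
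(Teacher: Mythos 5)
Your proposal is correct and follows exactly the scheme the paper sets up (and leaves implicit, since no written proof accompanies Theorem \ref{boussi_water_wave_compare}): combine the consistency result of Proposition \ref{constit_boussi}, the wellposedness results of Theorem \ref{existence_ww} and Proposition \ref{existence_boussi}, and the stability estimate of Proposition \ref{stability_boussi} with $(\tilde\zeta,\tilde{\textbf{V}})$ taken to be the averaged water-waves quantities and $\mathfrak{e}_{|t=0}=0$, then conclude by Sobolev embedding. Your bookkeeping of the time scales ($\max(\epsilon,\epsilon/\text{Ro})$ versus $\max(\mu,\epsilon/\text{Ro})$) and of the regularity margin afforded by $\textbf{N}\geq 12$ matches the intended argument, so there is nothing to add.
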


\noindent Therefore, in the Boussinesq regime $\mathcal{A}_{Boussi}$ a solution of the water waves system \eqref{castro_lannes_formulation} remains close to a solution of the Boussinesq-Coriolis equations \eqref{boussinesq_eq} over a time $\mathcal{O} \left(\frac{1}{\sqrt{\mu}} \right)$ with an accuracy of order $\mathcal{O} \left(\mu^{\frac{3}{2}} \right)$.

\begin{remark}
\noindent Notice that if one considers a solution of a system and wants to show that this solution remains close to a solution of the waves equations over times $\mathcal{O} \left(\frac{1}{\sqrt{\mu}} \right)$ with an accuracy of order $\mathcal{O} \left(\mu^{\frac{3}{2}} \right)$, it is sufficient to compare this solution with a solution of the Boussinesq-Coriolis equations \eqref{boussinesq_eq}. We use this strategy in the following.
\end{remark}

\section{The KP approximation}\label{KP_approx}

\noindent In this section, we consider the KP (Kadomtsev-Petviashvili) approximation under a weak Coriolis forcing. We assume that $\epsilon = \mu$ (long waves) and $\gamma  =\sqrt{\mu}$ (weakly transverse effects). We consider two different regimes. First, if $\frac{\epsilon}{\text{Ro}} = \sqrt{\mu}$ (weak rotation), we derive the rotation-modified KP equation \eqref{RKP_eq}. Then, if $\frac{\epsilon}{\text{Ro}} = \mu$ (very weak rotation), we derive the KP equation \eqref{KP_eq}. We refer to \cite{grimshaw_KP} for more physical explanations about these two models (see also \cite{grimshaw_survey,germain-renouard,grimshaw_intern}).
 
\subsection{Weak rotation, the rotation-modified KP equation}\label{model_RKP}

\noindent In the irrotational setting, the KP equation provides a good approximation of the water waves equation under the assumption that $\epsilon$ and $\mu$ have the same order and that $\gamma$ and $\sqrt{\mu}$ have the same order (see \cite{Saut_Lannes_KP} or Part 7.2 in \cite{Lannes_ww}). When a Coriolis forcing is taken into account, Grimshaw and Melville (\cite{grimshaw_KP}) derived an equation for long waves, which is an adaptation of the KP equation

\begin{equation}\label{RKP_eq}
\partial_{\xi} \left( \partial_{\tau} k +  \frac{3}{2} k \partial_{\xi} k + \frac{1}{6} \partial_{\xi}^{3} k \right) + \frac{1}{2} \partial_{yy} k = \frac{1}{2} k.
\end{equation}

\noindent This equation is called the rotation-modified KP equation or Grimshaw-Melville equation in the physics literature. Notice that this equation was originally derived for internal water waves (\cite{grimshaw_KP,germain-renouard}). In this part, we fully justify this equation. Inspired by \cite{grimshaw_KP,germain-renouard}, we consider the asymptotic regime

\begin{equation*}
\mathcal{A}_{\text{RKP}} = \left\{ \left(\mu, \epsilon, \gamma, \text{Ro} \right), 0 \leq \mu \leq \mu_{0}, \epsilon = \mu, \gamma = \sqrt{\mu}, \frac{\epsilon}{\text{Ro}} = \sqrt{\mu} \right\}.
\end{equation*}

\noindent Then, the Boussinesq-Coriolis equations become  ($\gamma=\sqrt{\mu}$)

\begin{equation}\label{boussi_weakrot}
\left\{
\begin{aligned}
&\partial_{t} \zeta + \nabla^{\gamma} \cdot \left( [1+\mu \zeta] \overline{\textbf{V}} \right) = 0,\\
&\left(1- \frac{\mu}{3} \nabla^{\gamma} \nabla^{\gamma} \cdot \right) \partial_{t} \overline{\textbf{V}} + \nabla^{\gamma} \zeta + \mu \overline{\textbf{V}} \cdot \nabla^{\gamma} \overline{\textbf{V}} + \sqrt{\mu} \overline{\textbf{V}}^{\perp} = 0.
\end{aligned}
\right.
\end{equation}

\noindent In the following, we denote $\textbf{V} = (u,v)^{t}$. Our strategy is similar to the one used in \cite{my_long_wave_corio} to fully justify the Ostrovsky equation. We consider an expansion of $\left( \zeta, \textbf{V}\right)$ with respect to $\mu$. Inspired by \cite{Saut_Lannes_KP} or Part 7.2 in \cite{Lannes_ww}, we seek an approximate solution $\left(\zeta_{app}, u_{app}, v_{app} \right)$ of \eqref{boussi_weakrot} in the form

\begin{equation}\label{ansatz_RKP}
\begin{aligned}
&\zeta_{app}(t,x,y) = k(x-t,y,\mu t) + \mu \zeta_{(1)}(t,x,y,\mu t),\\
&u_{app}(t,x,y) = k(x-t,y,\mu t) + \mu u_{(1)}(t,x,y,\mu t),\\
&v_{app}(t,x,y) = \sqrt{\mu} v_{(1/2)}(t,x,y,\mu t)
\end{aligned}
\end{equation}

\noindent where $k = k(\xi,\tau)$ is a traveling water wave modulated by a slow time variable and the others terms are correctors. In the following, we denote by $\tau$ the variable associated to the slow time variable $\mu t$. Plugging the ansatz into Sytem \eqref{boussi_weakrot}, we obtain

\small
\begin{equation}\label{boussi_weak_app}
\left\{
\begin{aligned}
&\partial_{t} \zeta_{app} + \nabla^{\gamma} \cdot \left( [1+\mu \zeta_{app}] \overline{\textbf{V}}_{app} \right) = \mu R^{1}_{(1)} + \mu^{2} R_{1},\\
&\left(1- \frac{\mu}{3} \nabla^{\gamma} \nabla^{\gamma} \cdot \right) \partial_{t} \overline{\textbf{V}}_{app} + \nabla^{\gamma} \zeta_{app} + \mu \overline{\textbf{V}}_{app} \cdot \nabla^{\gamma} \overline{\textbf{V}}_{app} + \sqrt{\mu} \overline{\textbf{V}}_{app}^{\perp} = \sqrt{\mu} R^{2}_{(1/2)} + \mu R^{2}_{(1)} + \mu^{\frac{3}{2}} R_{2}.
\end{aligned}
\right.
\end{equation}
\normalsize

\noindent where

\small
\begin{equation*}
\begin{aligned}
&R_{(1)}^{1} = \partial_{t} \zeta_{(1)} + \partial_{x} u_{(1)} + \partial_{\tau} k + 2 k \partial_{\xi} k  + \partial_{y} v_{(1/2)},\\
&R_{(1/2)}^{2} = \begin{pmatrix} 0 \\ \partial_{t} v_{(1/2)} + \partial_{y} k + k \end{pmatrix} \text{   and   } R_{(1)}^{2} = \begin{pmatrix} \partial_{t} u_{(1)} + \partial_{x} \zeta_{(1)} + \partial_{\tau} k +  \frac{1}{3} \partial^{3}_{\xi}  k + k \partial_{\xi} k - v_{(1/2)} \\ 0 \end{pmatrix},\\
\end{aligned}
\end{equation*}
\normalsize

\noindent and 

\small
\begin{equation}\label{remainder}
\begin{aligned}
&R_{1} = \partial_{\tau} \zeta_{(1)}  + \partial_{x} \left(k u_{(1)} + k \zeta_{(1)} + \mu \zeta_{(1)} u_{(1)} \right) + \partial_{y} ((k + \mu \zeta_{(1)}) v_{(1/2)}) ,\\
&R_{2} = \left(\sqrt{\mu} R_{2,1}, R_{2,2} \right)
\end{aligned}
\end{equation}
\normalsize

\noindent with

\small
\begin{align*}
&R_{2,1} =  \partial_{\tau} u_{(1)} - \frac{1}{3} \partial_{\xi}^{2} \partial_{\tau} k - \frac{1}{3} \partial_{x}^{2} \partial_{t} u_{(1)}  - \mu \frac{1}{3} \partial_{x}^{2} \partial_{\tau} u_{(1)} + \partial_{x} \left(k u_{(1)} \right) + \mu u_{(1)} \partial_{x} u_{(1)}\\
&\hspace{3cm} - \frac{1}{3} \partial^{3}_{xyt} v_{(1/2)} - \frac{\mu}{3} \partial^{3}_{xy \tau} v_{(1/2)} + v_{(1/2)} \partial_{y} \left(k + \mu u_{(1)} \right),\\
&R_{2,2} = \partial_{\tau} v_{(1/2)} + \partial_{y} \zeta_{(1)} + k \partial_{x} v_{(1/2)}  + u_{(1)} + \frac{1}{3} \partial_{y} \partial_{\xi}^{2} k + \mu u_{(1)} \partial_{x} v_{(1/2)} + \mu v_{(1/2)} \partial_{y} v_{(1/2)}  \\
&\hspace{3cm} - \frac{\mu}{3} \left( \partial^{3}_{y x \tau} k + \partial^{3}_{y x t} u_{(1)} + \partial_{y}^{2} \partial_{t} v_{(1/2)} + \mu \partial_{y x \tau} u_{(1)} + \mu \partial_{y}^{2} \partial_{\tau} v_{(1/2)} \right) .
\end{align*}
\normalsize

\noindent Then, the strategy is to choose $(k,v_{(1/2)})$ such that, for all $(x,y) \in \RR$, $t \in \left[0, \frac{T}{\mu} \right]$ and $\tau \in \left[0,T \right]$, 

\begin{equation*}
R_{(1)}^{1}(t,x,y,\tau) = 0 \text{   and   } R_{(1/2)}^{2}(t,x,y,\tau) = R_{(1)}^{2}(t,x,y,\tau) = 0.
\end{equation*}

\begin{remark}
\noindent As noticed in Part 7.2.2 in \cite{Lannes_ww}, we should a priori add $\sqrt{\mu} \zeta_{(1/2)}(t,x,y,\hspace{-0.02cm}\mu t\hspace{-0.05cm})$, $\sqrt{\mu} u_{(1/2)}(t,x,y,\mu t)$, $v_{(0)}(t,x,y,\mu t)$, and $\mu v_{(1)}(t,x,y,\mu t)$ to the ansatz \eqref{ansatz_RKP} for $\zeta_{app}$, $u_{app}$ and $v_{app}$ respectively. But, it leads to $\zeta_{(1/2)} = u_{(1/2)} = v_{(0)} = v_{(1)} = 0$ if these quantities are initially zero.
\end{remark}

\noindent We focus first on the condition $R_{(1/2)}^{2}(t,x,y,\tau) = 0$. Assuming that $v_{(1/2)}$ and $k$ vanish at $x=\infty$, this condition  is equivalent to the equation

\begin{equation*}
\partial_{t} \partial_{x} v_{(1/2)}(t,x,y,\tau) + \partial_{\xi} k(x-t,y,\tau) + \partial^{2}_{\xi y} k(x-t,y,\tau) = 0.
\end{equation*}

\noindent Then, using the fact that $\partial_{t} (k(x-t,y,\tau)) = - \partial_{\xi} k(x-t,y,\tau)$, we can integrate with respect to $t$ and we get

\small
\begin{equation*}
\partial_{x} v_{(1/2)}(t,x,y,\tau) = \partial_{x} v^{0}_{(1/2)}(x,y) + k(x-t,y,\tau) + \partial_{y} k(x-t,y,\tau) - k^{0}(x,y) - \partial_{y} k^{0}(x-t,y,\tau),
\end{equation*}
\normalsize

\noindent where $k^{0}$ and $v^{0}_{(1/2)}$ are the initial data of $k$ and $v_{(1/2)}$ respectively. Then, assuming that $k(\cdot,\tau) \in \partial_{x} H^{N}(\RR)$ for all $\tau \in [0,T]$ (see  \eqref{partialx_HN}), we obtain 

\begin{align*}
v_{(1/2)}(t,x,y,\tau) = &v^{0}_{(1/2)}(x,y) +  \partial_{x}^{-1} k(x-t,y,\tau) + \partial_{x}^{-1} \partial_{y} k(x-t,y,\tau)\\
&- \partial_{x}^{-1} k^{0}(x,y) - \partial_{x}^{-1} \partial_{y} k^{0}(x-t,y,\tau),
\end{align*}

\noindent Secondly, we study the conditions $R_{(1)}^{1} = R_{(1)}^{2} = 0$. Denoting $w_{\pm} = \zeta_{(1)} \pm u_{(1)}$, we obtain

\small
\begin{equation}\label{wave_RKP}
\begin{aligned}
&\left(\partial_{t} + \partial_{x} \right) w_{+} + \left( 2 \partial_{\tau} k + 3 k \partial_{\xi} k + \frac{1}{3} \partial_{\xi}^{3} k + \partial_{\xi}^{-1} \partial_{y}^{2} k - \partial_{\xi}^{-1} k \right) (x-t,\tau) + F^{1}_{0} = 0,\\
&\left(\partial_{t} - \partial_{x} \right) w_{-} + \left(k \partial_{\xi} k - \frac{1}{3} \partial_{\xi}^{3} k + \partial_{\xi}^{-1} \partial_{y}^{2} k + 2 \partial_{\xi}^{-1} \partial_{y} k + \partial_{\xi}^{-1} k \right) (x-t,\tau) + F^{2}_{0} = 0,\\
\end{aligned}
\end{equation}
\normalsize

\noindent where 

\begin{align*}
&F^{1}_{0} =  \partial_{y} v_{(1/2)}^{0} - v_{(1/2)}^{0} + \partial_{\xi}^{-1} k^{0} -  \partial_{\xi}^{-1} \partial_{y}^{2} k^{0},\\
&F^{2}_{0} = \partial_{y} v_{(1/2)}^{0} + v_{(1/2)}^{0} - \partial_{\xi}^{-1} k^{0} -  \partial_{\xi}^{-1} \partial_{y}^{2} k^{0} - 2 \partial_{\xi}^{-1} \partial_{y} k^{0}.
\end{align*}

\noindent The following Lemma (see Lemma 7.20 in \cite{Lannes_ww} or Lemma 2 in \cite{Saut_Lannes_KP}) gives us a Condition to control $\zeta_{(1)}$ and $u_{(1)}$.

\begin{lemma}\label{control_diff_speed}
\noindent Let $c_{1} \neq c_{2}$. Let $k_{1}, k_{2} \in H^{2}(\RR)$ with $k_{2} = \partial_{x} K_{2}$ and $K_{2} \in H^{3}(\RR)$. We consider the unique solution $k$ of

\begin{equation*}
\left\{
\begin{aligned}
&(\partial_{t} + c_{1} \partial_{x} )k = k_{1}(x-c_{1} t,y) + k_{2}(x-c_{2} t,y),\\
&k_{|t=0} = 0.
\end{aligned}
\right.
\end{equation*}

\noindent Then, $\underset{t \shortrightarrow \infty}{\lim} \lver \frac{1}{t} k(t, \cdot) \rver_{H^{2}} = 0$ if and only if $k_{1} \equiv 0$ and in that case

\begin{equation*}
\lver k(t, \cdot) \rver_{H^{2}} \leq C \frac{t}{1+t} \lver K_{2} \rver_{H^{3}}.
\end{equation*}
\end{lemma}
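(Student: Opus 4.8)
The plan is to solve the ODE explicitly and then analyze the large-time behavior by separating the two characteristic speeds. First I would integrate along the characteristic $x - c_1 t$: writing $k(t,x,y) = \int_0^t \big[ k_1(x - c_1(t-s) - c_1 s, y) + k_2(x - c_1(t-s) - c_2 s, y)\big]\, ds$, the first term is time-independent in the moving frame and contributes $t\, k_1$, while the second term, after the change of variable in the integral, becomes $\frac{1}{c_1 - c_2}\big[ K_2(x - c_1 t, y) - K_2(x - c_2 t, y)\big]$ using $k_2 = \partial_x K_2$. So in closed form
\begin{equation*}
k(t,x,y) = t\, k_1(x - c_1 t, y) + \frac{1}{c_1 - c_2}\Big( K_2(x - c_1 t, y) - K_2(x - c_2 t, y) \Big).
\end{equation*}

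Next I would read off the asymptotics. The second piece is bounded in $H^2$ by $\frac{2}{|c_1 - c_2|}\lver K_2 \rver_{H^3}$ uniformly in $t$ (translations are isometries on $H^s$, and $\lver K_2\rver_{H^2}\le\lver K_2\rver_{H^3}$), hence $\frac1t$ times it tends to $0$ in $H^2$. The first piece has $H^2$-norm exactly $t\,\lver k_1\rver_{H^2}$, so $\frac1t k(t,\cdot) \to k_1$ in $H^2$; this limit vanishes if and only if $k_1 \equiv 0$. That establishes the equivalence. When $k_1 \equiv 0$, only the bounded piece survives, and to get the stated sharp bound with the factor $\frac{t}{1+t}$ I would additionally exploit that at $t=0$ the two translates coincide, so $K_2(x - c_1 t,y) - K_2(x - c_2 t,y) = \int_{c_2 t}^{c_1 t} \partial_x K_2(x - s, y)\, ds$, whose $H^2$-norm is $\le |c_1 - c_2|\, t\, \lver \partial_x K_2\rver_{H^2} \le |c_1-c_2|\, t\, \lver K_2\rver_{H^3}$; combining the small-$t$ bound $C t \lver K_2\rver_{H^3}$ with the large-$t$ bound $C \lver K_2\rver_{H^3}$ gives the uniform estimate $C\frac{t}{1+t}\lver K_2\rver_{H^3}$, absorbing the $\frac{1}{|c_1-c_2|}$-dependence into the constant $C$ (which may depend on $c_1, c_2$).

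The only mild subtlety is bookkeeping the $y$-variable, which is a passive parameter throughout — the equation has no $\partial_y$ and the $H^2$-norms are taken over $\RR = \mathbb{R}^2_{x,y}$ — so all the translation-invariance arguments go through verbatim after a Fubini/Minkowski argument in $y$. I do not expect a genuine obstacle here; the main point is just to write the explicit solution and then play off the linear-in-$t$ growth of the resonant term against the uniform boundedness of the non-resonant term, with the $t=0$ matching giving the refined $\frac{t}{1+t}$ behavior near the origin.
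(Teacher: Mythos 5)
Your proof is correct and is essentially the argument the paper relies on: the paper does not prove this lemma itself but cites Lemma 7.20 of \cite{Lannes_ww} (equivalently Lemma 2 of \cite{Saut_Lannes_KP}), whose proof is exactly this explicit integration along the characteristic $x-c_{1}t$, the split into the resonant term $t\,k_{1}(x-c_{1}t,y)$ and the uniformly bounded term built from the two translates of $K_{2}$, and the interpolation between the $O(t)$ bound near $t=0$ and the uniform bound for large $t$. (One immaterial slip: the antiderivative evaluates to $\frac{1}{c_{1}-c_{2}}\bigl(K_{2}(x-c_{2}t,y)-K_{2}(x-c_{1}t,y)\bigr)$, i.e.\ the opposite sign from what you wrote, but this affects none of the estimates.)
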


\noindent Hence, since we want to avoid a linear growth of the solution of \eqref{wave_RKP}, we must  impose  

\begin{equation}\label{RKP_eq_k}
\partial_{\tau} k + \frac{3}{2} k \partial_{\xi} k + \frac{1}{6} \partial_{\xi}^{3} k + \frac{1}{2} \partial_{\xi}^{-1} \partial_{y}^{2} k - \frac{1}{2} \partial_{\xi}^{-1} k = 0
\end{equation}

\noindent which is the the rotation-modified KP equation defined in \eqref{RKP_eq}. In the following, we provide a local existence result for this equation. This proposition generalizes Theorem 1.1 in \cite{soliton_RKP}.

\begin{prop}\label{existence_RKP}
\noindent Let $N \geq 4$ and $k_{0} \in \partial_{x} H^{N}(\RR)$. Then, there exists a time $T > 0$ and a unique solution $k \in \mathcal{C} \left([0,T]; \partial_{x} H^{N}(\RR) \right)$ to the rotation-modified KP equation \eqref{RKP_eq_k} and one has

\begin{equation*}
\lver \partial_{\xi}^{-1} k(t,\cdot) \rver_{H^{N}} \leq C \left(T, \lver \partial_{\xi}^{-1} k_{0} \rver_{H^{N}} \right).
\end{equation*}

\noindent Furthermore, if $k_{0}, \partial_{y}^{2} k_{0} \in \partial_{x}^{2} H^{N-2}(\RR)$, then $k \in \mathcal{C} \left([0,T]; \partial_{x}^{2} H^{N-2}(\RR) \right)$ and one has

\begin{align*}
&\lver \partial_{\xi}^{-2} k(t,\cdot) \rver_{H^{N-2}} \leq C \left(T, \lver \partial_{\xi}^{-2} k_{0} \rver_{H^{N-2}}, \lver \partial_{\xi}^{-2} \partial_{y}^{2} k_{0} \rver_{H^{N-2}}, \lver \partial_{\xi} k_{0} \rver_{H^{N}} \right).
\end{align*}

\noindent Finally, if $N \geq 6$ and $k_{0}, \partial_{y}^{2} k_{0} \in \partial_{x}^{2} H^{N-4}(\RR)$, then $\partial_{y}^{2} k \in \mathcal{C} \left([0,T]; \partial_{x}^{2} H^{N-4}(\RR) \right)$ and one has

\begin{align*}
&\lver \partial_{\xi}^{-2} k(t,\cdot) \rver_{H^{N-4}} \leq C \left(T, \lver \partial_{\xi}^{-2} k_{0} \rver_{H^{N-4}}, \lver \partial_{\xi}^{-2} \partial_{y}^{2} k_{0} \rver_{H^{N-4}}, \lver \partial_{\xi}^{-1} k_{0} \rver_{H^{N}} \right).
\end{align*}
\end{prop}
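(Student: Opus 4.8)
\emph{Set-up and key structural remarks.} The plan is to treat \eqref{RKP_eq_k} as a quasilinear dispersive equation and to run a Kato-type energy method in the anisotropic space $\partial_{x} H^{N}(\RR)$, exactly as for the KP equation in \cite{Saut_Lannes_KP} or Part 7.2 in \cite{Lannes_ww}; the only genuinely new term is the Coriolis term $-\frac12 \partial_{\xi}^{-1} k$, which turns out to be harmless. The linear part of \eqref{RKP_eq_k} is made of the three operators $\partial_{\xi}^{3}$, $\partial_{\xi}^{-1}\partial_{y}^{2}$ and $\partial_{\xi}^{-1}$, each of which is skew-adjoint on $L^{2}(\RR)$ and commutes with every $\partial^{\alpha}$, while the nonlinearity $\frac32 k\partial_{\xi}k = \frac34 \partial_{\xi}(k^{2})$ is in conservation form. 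It is essential to keep the dispersive term in the non-local form $\partial_{\xi}^{-1}\partial_{y}^{2}$ (the KP-II structure): applied to $k$ this operator is skew-adjoint, whereas the local operator $\partial_{y}^{2}$ that would appear after one integration in $\xi$ has the wrong sign, i.e. it is a backward heat operator in $y$. Accordingly I work throughout with $k$ itself, setting $\tilde k = \partial_{\xi}^{-1}k$ and measuring the solution by $\lver \tilde k \rver_{H^{N}}$.

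\emph{A priori estimate (part (i)).} Applying $\partial^{\alpha}\partial_{\xi}^{-1}$ with $\lver \alpha \rver \leq N$ to \eqref{RKP_eq_k} and pairing with $\partial^{\alpha}\tilde k$ in $L^{2}(\RR)$, the three linear contributions vanish by skew-adjointness and there remains only a nonlinear term of the form $\left( \partial^{\alpha}(k^{2}), \partial^{\alpha}\tilde k \right)_{2}$. Its sole dangerous piece is the top-order term $\left( k\, \partial^{\alpha} k, \partial^{\alpha}\tilde k \right)_{2}$; writing $\partial^{\alpha} k = \partial_{\xi}\partial^{\alpha}\tilde k$ and integrating by parts once in $\xi$ turns it into $-\tfrac12 \left( \partial_{\xi} k\, (\partial^{\alpha}\tilde k)^{2} \right)_{2}$, bounded by $\lver \partial_{\xi} k \rver_{L^{\infty}}\lver \partial^{\alpha}\tilde k \rver_{2}^{2}$, and the remaining commutator terms are handled by the usual Moser product estimates, using $H^{N-1}(\RR)\hookrightarrow W^{1,\infty}(\RR)$ (valid since $N\geq 4$). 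Summing over $\alpha$ gives $\frac{d}{d\tau}\lver \tilde k \rver_{H^{N}}^{2} \leq C\!\left( \lver \tilde k \rver_{H^{N}} \right) \lver \tilde k \rver_{H^{N}}^{2}$, hence by Gr\"onwall's inequality a local bound on $\lver \partial_{\xi}^{-1}k \rver_{H^{N}}$ on an interval $[0,T]$, with $T$ and the constant depending only on $\lver \partial_{\xi}^{-1}k_{0} \rver_{H^{N}}$.

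\emph{Construction and uniqueness (end of part (i)).} To turn this into an existence statement I would regularize the equation (either by Friedrichs mollifiers or by a fourth-order artificial viscosity compatible with $\partial_{x}H^{N}$): the regularized problems are globally solvable ODEs in $\partial_{x}H^{N}$, they satisfy the same a priori estimate uniformly in the regularization parameter, hence exist on a common $[0,T]$ with uniform bounds; an Aubin--Lions compactness argument lets one pass to the limit, and uniqueness together with the $\mathcal{C}([0,T];\partial_{x}H^{N})$ regularity (rather than merely $L^{\infty}$) follows from an energy estimate for the difference of two solutions combined with a Bona--Smith argument.

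\emph{Higher regularity (parts (ii)--(iii)) and the main obstacle.} For (ii) I would commute $\partial_{\xi}^{-1}$ once more with the equation; equivalently, rearranging \eqref{RKP_eq} in the ``elliptic'' form $(1-\partial_{yy})\partial_{\xi}^{-1}k = \partial_{\xi}\!\left( 2\partial_{\tau}\partial_{\xi}^{-1}k + \tfrac32 k^{2} + \tfrac13 \partial_{\xi}^{2}k \right)$ and inverting, one gets $\partial_{\xi}^{-2}k = (1-\partial_{yy})^{-1}\!\left( 2\partial_{\tau}\partial_{\xi}^{-1}k + \tfrac32 k^{2} + \tfrac13 \partial_{\xi}^{2}k \right)$, which reduces the control of $\partial_{\xi}^{-2}k$ in $H^{N-2}$ to that of the right-hand side; the latter is estimated from part (i), from the equation satisfied by $\partial_{\xi}^{-1}k$ and from Moser estimates, provided the additional data norms $\lver \partial_{\xi}^{-2}k_{0}\rver_{H^{N-2}}$ and $\lver \partial_{\xi}^{-2}\partial_{y}^{2}k_{0}\rver_{H^{N-2}}$ are finite, and at the cost of two derivatives (the two $\xi$-antiderivatives). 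Part (iii) follows from the same scheme applied to $\partial_{y}^{2}k$, which solves the equation linearized about $k$ with a source that is already controlled (since $\partial_{y}$ commutes with the flow up to the quadratic nonlinearity), costing two further derivatives and hence requiring $N\geq 6$. The delicate point — and the reason (ii)--(iii) carry extra, anisotropic hypotheses on the data and lose derivatives — is precisely the bookkeeping for the non-local operators $\partial_{\xi}^{-1}$ and $\partial_{\xi}^{-1}\partial_{y}^{2}$: one must remain inside the constrained spaces $\partial_{x}H^{N}$, $\partial_{x}^{2}H^{N-2}$ so that these operators are meaningful, and keep track of how many $\xi$-antiderivatives each term tolerates. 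Once the functional framework is fixed, the energy estimate itself is routine, because the dispersive and Coriolis operators are skew-adjoint and the nonlinearity is conservative.
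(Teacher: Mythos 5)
Your overall architecture for parts (ii)--(iii) coincides with the paper's: both rest on the identity obtained by applying $\partial_{\xi}^{-1}$ to \eqref{RKP_eq_k}, namely $\tfrac{1}{2}\left(1-\partial_{y}^{2}\right)\partial_{\xi}^{-2}k = \partial_{\xi}^{-1}\partial_{\tau}k + \tfrac{3}{4}k^{2} + \tfrac{1}{6}\partial_{\xi}^{2}k$, followed by inversion of $1-\partial_{y}^{2}$. But there is a genuine gap in how you control the term $\partial_{\tau}\partial_{\xi}^{-1}k$ on the right-hand side. You say it is estimated ``from part (i), from the equation satisfied by $\partial_{\xi}^{-1}k$ and from Moser estimates''; yet that equation reads $\partial_{\tau}\partial_{\xi}^{-1}k = -\tfrac{3}{4}k^{2} - \tfrac{1}{6}\partial_{\xi}^{2}k - \tfrac{1}{2}\partial_{\xi}^{-2}\partial_{y}^{2}k + \tfrac{1}{2}\partial_{\xi}^{-2}k$, i.e.\ it expresses $\partial_{\tau}\partial_{\xi}^{-1}k(\tau)$ in terms of $\partial_{\xi}^{-2}k(\tau)$ and $\partial_{\xi}^{-2}\partial_{y}^{2}k(\tau)$ --- exactly the quantities you are trying to bound. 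The hypotheses $k_{0},\partial_{y}^{2}k_{0}\in\partial_{x}^{2}H^{N-2}(\RR)$ only make these antiderivatives finite at $\tau=0$; nothing in part (i) propagates them to positive times. The paper closes this circle by differentiating the equation in time: $\tilde{k}=\partial_{\tau}k$ solves the equation linearized about $k$, whose linear part generates a unitary group $S(\tau)$ on $H^{s}(\RR)$, and Duhamel's formula $\partial_{\xi}^{-1}\tilde{k}(\tau)=S(\tau)\partial_{\xi}^{-1}\tilde{k}_{0}-\tfrac{3}{2}\int_{0}^{\tau}S(\tau-s)\bigl(k\tilde{k}\bigr)(s)\,ds$ propagates $\partial_{\xi}^{-1}\tilde{k}$ forward; the extra hypotheses on the data enter only through $\partial_{\xi}^{-1}\tilde{k}_{0}$, computed from the equation at $\tau=0$, while the source $k\tilde{k}$ is controlled by part (i). Once $\partial_{\xi}^{-1}\partial_{\tau}k$ is under control, part (ii) follows from the elliptic identity as you describe, and part (iii) is then immediate by isolating $\partial_{y}^{2}\partial_{\xi}^{-2}k$ in the \emph{same} identity --- your separate scheme for the equation satisfied by $\partial_{y}^{2}k$ is unnecessary and would run into the same circularity.

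On part (i) you and the paper genuinely diverge, but harmlessly: the paper simply invokes Theorem 1.1 of \cite{soliton_RKP}, whereas you sketch a self-contained energy method in $\partial_{x}H^{N}(\RR)$ (skew-adjointness of $\partial_{\xi}^{3}$, $\partial_{\xi}^{-1}\partial_{y}^{2}$, $\partial_{\xi}^{-1}$, integration by parts on the top-order quadratic term, regularization plus Bona--Smith). Your a priori estimate is correct and the construction is standard, so this buys self-containedness at the cost of length; it is not where the difficulty of the proposition lies.
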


\begin{proof}
\noindent The first point follows from Theorem 1.1 in \cite{soliton_RKP}. We only have to prove the second and the third points. This proof is similar to the proof of Lemma 7.22 in \cite{Lannes_ww} for the KP equation and the proof of Proposition 3.8 in \cite{my_long_wave_corio} for the Ostrovsky equation. In the following, we denote by $S(t)$ the semi-group of the linearized rotation-modified KP equation

\begin{equation*}
\partial_{\tau} k + \frac{1}{6} \partial_{\xi}^{3} k + \frac{1}{2} \partial_{\xi}^{-1} \partial_{y}^{2} k - \frac{1}{2} \partial_{\xi}^{-1} k = 0.
\end{equation*}

\noindent One can check that this semi-group acts unitary on $H^{N}(\RR)$. We also define $\tilde{k} = \partial_{\tau} k$. We can check that

\begin{equation*}
\partial_{\tau} \tilde{k} + \frac{1}{6} \partial_{\xi}^{3} \tilde{k} + \frac{1}{2} \partial_{\xi}^{-1} \partial_{y}^{2} \tilde{k} - \frac{1}{2} \partial_{\xi}^{-1} \tilde{k} + \frac{3}{2} \partial_{\xi} \left( \tilde{k} k \right) = 0.
\end{equation*}

\noindent Using the Duhamel's formula we obtain

\begin{equation*}
\partial_{\xi}^{-1} \tilde{k}(\tau) = S(t) \partial_{\xi}^{-1} \tilde{k}_{0} - \frac{3}{2} \int_{0}^{\tau} S(t-s) \left( k(s) \tilde{k}(s)\right) ds.
\end{equation*}

\noindent We can see by product estimates that $\partial_{\xi}^{-1} \tilde{k}_{0}$, $k(s) \tilde{k}(s) \in H^{N-4}(\RR)$ and then that $\tilde{k} \in \mathcal{C} \left([0,T]; \partial_{x} H^{N-3}(\RR) \right)$. Then, we consider the following equality

\begin{equation*}
\frac{1}{2} \left(1- \partial_{y}^{2} \right) \partial_{\xi}^{-2} k = \partial_{\xi}^{-1} \tilde{k} + \frac{3}{4}  k^{2} + \frac{1}{6} \partial_{\xi}^{2} k,
\end{equation*}

\noindent For the second point, we get that $(1-\partial_{\xi}^{2} - \partial_{y}^{2}) \partial_{\xi}^{-2} k \in H^{N-4}(\RR)$ and the result follows easily. For the third point, we obtain from the second point that $\partial_{y}^{2} \partial_{\xi}^{-2} k \in H^{N-4}(\RR)$.
\end{proof}

\noindent We can now rigorously justify the rotation-modified KP equation. The following theorem is the main theorem of this part.

\begin{thm}\label{full_just_RKP}
\noindent Let $k^{0} \in \partial_{x}^{2} H^{12}(\RR)$ such that $1+\epsilon k^{0} \geq h_{\min} > 0$ and $v^{0} \in \partial_{x} H^{8}(\RR)$. Suppose that \upshape$\left( \mu,\epsilon,\gamma,\text{Ro} \right) \in \mathcal{A}_{\text{RKP}}$\itshape. Then, there exists a time $T_{0} > 0$, such that we have

\medskip

\noindent (i) a unique classical solution \upshape$\left(\zeta_{B},u_{B}, v_{B} \right)$ \itshape of \eqref{boussi_weakrot} with initial data \upshape$\left(k^{0}, k^{0}, \sqrt{\mu} v^{0} \right)$ \itshape on $\left[ 0, \frac{T_{0}}{\mu} \right]$.

\medskip

\noindent (ii)  a unique classical solution \upshape$k$ \itshape of \eqref{RKP_eq_k} with initial data \upshape$k^{0}$ \itshape on $\left[ 0, T_{0} \right]$.

\medskip 

\noindent (iii) If we define $\left(\zeta_{RKP}, u_{RKP} \right)(t,x,y) = \left( k(x-t,y,\mu t),k(x-t,y,\mu t ) \right)$ we have the following error estimate for all $0 \leq t \leq \frac{T_{0}}{\mu}$,

\begin{equation*}
\lver \left(\zeta_{B},u_{B} \right) - \left(\zeta_{RKP},u_{RKP} \right) \rver_{L^{\infty}([0,t] \times \RR)} \leq  C \frac{\mu t}{1+t} (1+ \sqrt{\mu} t) 
\end{equation*}
\itshape

\noindent where $C = C\left(\frac{1}{h_{\min}}, \mu_{0}, \lver \partial_{x}^{-2} k^{0} \rver_{H^{12}}, \lver \partial_{x}^{-1} v^{0} \rver_{H^{8}} \right)$.

\end{thm}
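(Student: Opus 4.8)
\textbf{Proof proposal for Theorem \ref{full_just_RKP}.}

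The plan is to combine three ingredients: the local existence results already in hand, the consistency of the Boussinesq-Coriolis system with the RKP ansatz, and a stability estimate for the Boussinesq-Coriolis equations. First I would invoke Proposition \ref{existence_RKP} to produce the solution $k$ of \eqref{RKP_eq_k} on a time interval $[0,T_0]$, using the hypothesis $k^0 \in \partial_x^2 H^{12}(\RR)$ so that all the norms $\lver \partial_\xi^{-1} k\rver_{H^{12}}$, $\lver \partial_\xi^{-2} k\rver_{H^{10}}$ and (via the third point of that proposition) $\lver \partial_\xi^{-2}\partial_y^2 k\rver_{H^8}$ are controlled on $[0,T_0]$; this regularity is what is needed to make sense of the correctors $\zeta_{(1)}, u_{(1)}, v_{(1/2)}$ and of the remainders $R_1, R_2$ appearing in \eqref{boussi_weak_app} and \eqref{remainder}. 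Next I would define $v_{(1/2)}$ by the explicit formula obtained from solving $R_{(1/2)}^2 = 0$, and define $\zeta_{(1)}, u_{(1)}$ via $w_\pm = \zeta_{(1)}\pm u_{(1)}$ solving the transport equations \eqref{wave_RKP}: because $k$ satisfies \eqref{RKP_eq_k}, the "resonant" source term in the $w_+$ equation vanishes and Lemma \ref{control_diff_speed} gives $\lver w_+(t)\rver_{H^2} \le C\frac{t}{1+t}\lver K_2\rver_{H^3}$, i.e. the correctors grow at most like $\frac{t}{1+t}$, while $w_-$ is controlled the same way (its source is purely of "$c_2$-type"). Here one must also use the initial data $v^0 \in \partial_x H^8$ to control the data-dependent terms $F^1_0, F^2_0$, and arrange (as in the Remark after \eqref{ansatz_RKP}) that the half-order and $v_{(0)}, v_{(1)}$ correctors vanish.

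With the approximate solution $\left(\zeta_{app}, u_{app}, v_{app}\right)$ defined by \eqref{ansatz_RKP}, the construction guarantees that it solves the Boussinesq-Coriolis system \eqref{boussi_weakrot} up to a residual $R = (\mu^2 R_1, \mu^{3/2} R_2)$, since all the $O(\sqrt\mu)$ and $O(\mu)$ terms have been cancelled by the choices above. The key quantitative point is to bound $\lver R\rver_{L^\infty([0,T_0/\mu]; H^{N-1})}$: inspecting \eqref{remainder}, $R_1$ and $R_2$ involve $k$, $\zeta_{(1)}$, $u_{(1)}$, $v_{(1/2)}$ and their $\xi$- and $\tau$-derivatives, all of which are controlled by the norms from Proposition \ref{existence_RKP} together with the $\frac{t}{1+t}$ bound on the correctors; thus $\lver R_1\rver, \lver R_2\rver \le C$ uniformly on $[0, T_0/\mu]$, with $C = C(1/h_{\min}, \mu_0, \lver \partial_x^{-2} k^0\rver_{H^{12}}, \lver \partial_x^{-1} v^0\rver_{H^8})$. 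Then I would apply Proposition \ref{existence_boussi} to get the genuine Boussinesq-Coriolis solution $(\zeta_B, u_B, v_B)$ on $[0, T_0/\mu]$ with data $(k^0, k^0, \sqrt\mu v^0)$ — after possibly shrinking $T_0$ so that the lower bound on the depth and the uniform bounds hold — and Proposition \ref{stability_boussi} with $\tilde U = (\zeta_{app}, \overline{\textbf{V}}_{app})$ to obtain
\begin{equation*}
\lver (\zeta_B, u_B, v_B) - (\zeta_{app}, u_{app}, v_{app})\rver_{X^{N-1}_\mu} \le c_1\left( \lver \mathfrak{e}_{|t=0}\rver_{X^{N-1}_\mu} + t\, \lver R\rver_{L^\infty(H^{N-1})}\right) \le C\,\mu^{3/2} t,
\end{equation*}
using that the initial data of the two systems coincide (so $\mathfrak{e}_{|t=0} = 0$) and that $\lver R\rver \le C\mu^{3/2}$.

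Finally I would assemble the error estimate. By the triangle inequality and Sobolev embedding $H^{N-1}(\RR)\hookrightarrow L^\infty(\RR)$ (valid for $N-1 \ge 2$, i.e. $N\ge 3$, which holds here),
\begin{equation*}
\lver (\zeta_B, u_B) - (\zeta_{RKP}, u_{RKP})\rver_{L^\infty} \le \lver (\zeta_B, u_B) - (\zeta_{app}, u_{app})\rver_{L^\infty} + \lver (\zeta_{app}, u_{app}) - (\zeta_{RKP}, u_{RKP})\rver_{L^\infty}.
\end{equation*}
The first term is $\le C\mu^{3/2} t \le C \mu \cdot \mu^{1/2} t$, which is absorbed into $C\frac{\mu t}{1+t}(1+\sqrt\mu t)$ provided $t \le T_0/\mu$ (so $\mu t \le T_0$, hence $\mu^{3/2} t = \sqrt\mu\cdot \mu t \le C\sqrt\mu$, comparable to the claimed bound for $t$ of order $1/\mu$); the second term is exactly $\mu$ times the $L^\infty$ norm of the correctors $(\zeta_{(1)}, u_{(1)})$, which by the $\frac{t}{1+t}$ bound from Lemma \ref{control_diff_speed} is $\le C\mu\frac{t}{1+t}$. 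Keeping track of both contributions gives the stated $C\frac{\mu t}{1+t}(1+\sqrt\mu t)$. I expect the main obstacle to be the bookkeeping in the second paragraph: verifying that every term in the explicit remainders $R_1, R_2$ of \eqref{remainder} — in particular the mixed derivatives like $\partial^3_{xyt} v_{(1/2)}$ and the $\partial_\tau$-terms, which require differentiating the RKP equation as in the proof of Proposition \ref{existence_RKP} — is bounded in $H^{N-1}$ uniformly on the long time scale $[0, T_0/\mu]$, and that the regularity budget ($k^0 \in \partial_x^2 H^{12}$, $v^0 \in \partial_x H^8$, landing in $X^{N-1}_\mu$ with $N-1$ large enough for the $L^\infty$ embedding) closes consistently.
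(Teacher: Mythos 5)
Your proposal follows essentially the same route as the paper's proof: existence of $k$ and of the Boussinesq--Coriolis solution from Propositions \ref{existence_RKP} and \ref{existence_boussi}, the $\frac{t}{1+t}$ control of the correctors $\zeta_{(1)}, u_{(1)}$ via Lemma \ref{control_diff_speed} applied to \eqref{wave_RKP}, a uniform bound on the residuals $R_1, R_2$, the stability estimate of Proposition \ref{stability_boussi} giving the $C\mu^{3/2}t$ term, and the triangle inequality combining this with the $\mu\frac{t}{1+t}$ contribution of the correctors. The decomposition of the final error into these two pieces matches the paper exactly, so the argument is correct and not a genuinely different proof.
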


\begin{proof}
\noindent In order to simplify the technicality of this proof, $C$  is a constant of the form

\begin{equation*}
C =  C\left(\frac{1}{h_{\min}}, \mu_{0}, \lver \partial_{x}^{-2} k^{0} \rver_{H^{12}}, \lver \partial_{x}^{-1} v^{0} \rver_{H^{9}} \right)
\end{equation*}

\noindent The first and second point follow from Proposition \ref{existence_boussi} and Proposition \ref{existence_RKP}. Then, from System \eqref{wave_RKP} and Lemma \ref{control_diff_speed}, we obtain

\begin{equation*}
\lver \zeta_{(1)} \rver_{H^{2}} + \lver u_{(1)} \rver_{H^{2}} \leq C \frac{t}{1+t}.
\end{equation*} 

\noindent We also notice that we can control all the derivatives with respect to $x$, $y$ or $\tau$ of $u$ and $v$ be differentiating \eqref{wave_RKP}. Hence, we get a control for the remainders $R_{1}$ and $R_{2}$  and we obtain, for $0 \leq t \leq \frac{T}{\mu}$,

\begin{equation*}
\lver R_{1}(t) \rver_{H^{3}} + \lver R_{2}(t) \rver_{H^{3}} \leq C.
\end{equation*}
 
\noindent Then, using Proposition \ref{stability_boussi}, on can have

\begin{equation*}
\lver \left(\zeta_{B}, u_{B}, v_{B} \right) - \left(\zeta_{app}, u_{app}, v_{app} \right) \rver_{L^{\infty}([0,t] \times \RR)} \leq C \mu^{\frac{3}{2}} t.
\end{equation*}

\noindent Finally, from the ansatz \eqref{ansatz_RKP} and Lemma \ref{control_diff_speed}, we have

\begin{equation*}
\lver \left(\zeta_{app}, u_{app} \right) - \left(\zeta_{RKP},v_{RKP} \right) \rver_{L^{\infty}([0,t] \times \RR)} \leq \mu \frac{t}{1+t},
\end{equation*}

\noindent and the result follows easily.

\end{proof}

\noindent This theorem provides the first mathematical justification of the rotation-modified KP equation. Notice that the condition $k^{0} \in \partial_{x}^{2} H^{10}(\RR)$ is quite restrictive. As noted in \cite{Lannes_ww} Part 7.2.1 and in \cite{my_long_wave_corio} for the Ostrovsky equation, using the strategy developed in \cite{benyoussef_lannes}, we can hope to weaken the assumption on $k^{0}$ into $k^{0} \in \partial_{x} H^{9}(\RR)$.

\subsection{Very weak rotation, the KP equation}\label{model_KP}

\noindent In this part we study the situation of a very weak Coriolis forcing. We derive and fully justify the KP equation. We show that if $\frac{\epsilon}{\text{Ro}}$ is small enough, we can derive the KP equation

\begin{equation}\label{KP_eq}
\partial_{\xi} \left( \partial_{\tau} k +  \frac{3}{2} k \partial_{\xi} k + \frac{1}{6} \partial_{\xi}^{3} k \right) + \frac{1}{2} \partial_{yy} k = 0.
\end{equation}

\noindent Inspired by \cite{grimshaw_KP}, we consider the following asymptotic regime

\begin{equation*}
\mathcal{A}_{\text{KP}} = \left\{ \left(\mu, \epsilon, \gamma, \text{Ro}  \right), 0 \leq \mu \leq \mu_{0}, \epsilon = \mu, \gamma = \sqrt{\mu}, \frac{\epsilon}{\text{Ro}} = \mu \right\}.
\end{equation*}

\noindent The Boussinesq-Coriolis equations become $(\gamma = \sqrt{\mu})$

\begin{equation}\label{boussi_KP}
\left\{
\begin{aligned}
&\partial_{t} \zeta + \nabla^{\gamma} \cdot \left( [1+\mu \zeta] \overline{\textbf{V}} \right) = 0,\\
&\left(1- \frac{\mu}{3} \nabla^{\gamma} \nabla^{\gamma} \cdot \right) \partial_{t} \overline{\textbf{V}} + \nabla^{\gamma} \zeta + \mu \overline{\textbf{V}} \cdot \nabla^{\gamma} \overline{\textbf{V}} + \mu \overline{\textbf{V}}^{\perp} = 0.
\end{aligned}
\right.
\end{equation}

\noindent Proceeding as in the previous part, we denote $\textbf{V} = (u,v)^{t}$ and we seek an approximate solution $\left(\zeta_{app}, u_{app}, v_{app} \right)$ of \eqref{boussi_KP} in the form

\begin{equation}\label{ansatz_KP}
\begin{aligned}
&\zeta_{app}(t,x) = k(x-t,y,\mu t) + \mu \zeta_{(1)}(t,x,y,\mu t),\\
&u_{app}(t,x) = k(x-t,y,\mu t) + \mu u_{(1)}(t,x,y,\mu t),\\
&v_{app}(t,x) = \sqrt{\mu} v_{(1/2)}(t,x,y,\mu t) + \mu v_{(1)}(t,x,y,\mu t).
\end{aligned}
\end{equation}

\noindent Then, we plug the ansatz into Sytem \eqref{boussi_KP} and we get

\small
\begin{equation*}
\left\{
\begin{aligned}
&\partial_{t} \zeta_{app} + \nabla^{\gamma} \cdot \left( [1+\mu \zeta_{app}] \overline{\textbf{V}}_{app} \right) = \mu R^{1}_{(1)} + \mu^{\frac{3}{2}} \partial_{y} v_{(1)} + \mu^{2} R_{1},\\
&\left(1- \frac{\mu}{3} \nabla^{\gamma} \nabla^{\gamma} \cdot \right) \partial_{t} \overline{\textbf{V}}_{app} + \nabla^{\gamma} \zeta_{app} + \mu \overline{\textbf{V}}_{app} \cdot \nabla^{\gamma} \overline{\textbf{V}}_{app} + \mu \overline{\textbf{V}}_{app}^{\perp} = \sqrt{\mu} R^{2}_{(1/2)} + \mu R^{2}_{(1)} + \mu^{\frac{3}{2}} R_{2}.
\end{aligned}
\right.
\end{equation*}
\normalsize

\noindent where

\small
\begin{equation*}
\begin{aligned}
&R_{(1)}^{1} = \partial_{t} \zeta_{(1)} + \partial_{x} u_{(1)} + \partial_{\tau} k + 2 k \partial_{\xi} k  + \partial_{y} v_{(1/2)},\\
&R_{(1/2)}^{2} = \begin{pmatrix} 0 \\ \partial_{t} v_{(1/2)} + \partial_{y} k \end{pmatrix} \text{   and   } R_{(1)}^{2} = \begin{pmatrix} \partial_{t} u_{(1)} + \partial_{x} \zeta_{(1)} + \partial_{\tau} k +  \frac{1}{3} \partial^{3}_{\xi}  k + k \partial_{\xi} k \\ \partial_{t} v_{(1)} + k \end{pmatrix},\\
\end{aligned}
\end{equation*}
\normalsize

\noindent and 

\small
\begin{equation*}
\begin{aligned}
&R_{1} = \partial_{\tau} \zeta_{(1)}  + \partial_{x} \left(k u_{(1)} + k \zeta_{(1)} + \mu \zeta_{(1)} u_{(1)} \right) + \partial_{y} ((k + \mu \zeta_{(1)}) (v_{(1/2)} + \mu v_{(1)}) ) ,\\
&R_{2} = \left(- (v_{(1/2)} + \sqrt{\mu} v_{(1)}) + \sqrt{\mu} \tilde{R}_{2,1}, R_{2,2} \right)
\end{aligned}
\end{equation*}
\normalsize

\noindent with

\small
\begin{align*}
&\tilde{R}_{2,1} =  \partial_{\tau} u_{(1)} - \frac{1}{3} \partial_{\xi}^{2} \partial_{\tau} k - \frac{1}{3} \partial_{x}^{2} \partial_{t} u_{(1)}  - \mu \frac{1}{3} \partial_{x}^{2} \partial_{\tau} u_{(1)} + \partial_{x} \left(k u_{(1)} \right) + \mu u_{(1)} \partial_{x} u_{(1)}\\
&\hspace{1cm}  - \frac{1}{3} \partial^{3}_{xyt} (v_{(1/2)} + \sqrt{\mu} v_{(1)}) - \frac{\mu}{3} \partial^{3}_{xy \tau} (v_{(1/2)} + \sqrt{\mu} v_{(1)}) + (v_{(1/2)} + \sqrt{\mu} v_{(1)}) \partial_{y} \left(k + \mu u_{(1)} \right),\\
&R_{2,2} = \partial_{\tau} v_{(1/2)} + \partial_{y} \zeta_{(1)} + k \partial_{x} (v_{(1/2)} + \sqrt{\mu} v_{(1)}) + u_{(1)} + \frac{1}{3} \partial_{y} \partial_{\xi}^{2} k + \mu u_{(1)} \partial_{x} (v_{(1/2)} + \sqrt{\mu} v_{(1)})\\
&\hspace{1cm} - \frac{\mu}{3} \left( \partial^{3}_{y x \tau} k + \partial^{3}_{y x t} u_{(1)} + \partial_{y}^{2} \partial_{t} (v_{(1/2)} + \sqrt{\mu} v_{(1)}) + \mu \partial_{y x \tau} u_{(1)} + \mu \partial_{y}^{2} \partial_{\tau} (v_{(1/2)} + \sqrt{\mu} v_{(1)}) \right)\\
&\hspace{1cm} + \mu (v_{(1/2)} + \sqrt{\mu} v_{(1)}) \partial_{y} (v_{(1/2)} + \sqrt{\mu} v_{(1)}).
\end{align*}
\normalsize

\noindent Then, we choose $(k,v_{(1/2)},v_{(1)})$ such that, for all $(x,y) \in \RR$, $t \in \left[0, \frac{T}{\mu} \right]$ and $\tau \in \left[0,T \right]$, 

\begin{equation*}
R_{(1)}^{1}(t,x,y,\tau) = 0 \text{   and   } R_{(1/2)}^{2}(t,x,y,\tau) = R_{(1)}^{2}(t,x,y,\tau) = 0.
\end{equation*}

\noindent First, we obtain that 

\begin{align*}
&v_{(1/2)} = \partial_{x}^{-1} \partial_{y} k + v_{(1/2)}^{0} - \partial_{x}^{-1} \partial_{y} k^{0},\\
&v_{(1)} = \partial_{x}^{-1} k + v_{(1)}^{0} - \partial_{x}^{-1} k^{0}.
\end{align*}

\noindent Then, denoting $w_{\pm} = \zeta_{(1)} \pm u_{(1)}$, we get

\small
\begin{equation*}
\begin{aligned}
&\left(\partial_{t} + \partial_{x} \right) w_{+} + \left( 2 \partial_{\tau} k + 3 k \partial_{\xi} k + \frac{1}{3} \partial_{\xi}^{3} k + \partial_{\xi}^{-1} \partial_{y}^{2} k \right) (x-t,\tau) + F_{0} = 0,\\
&\left(\partial_{t} - \partial_{x} \right) w_{-} + \left(k \partial_{\xi} k - \frac{1}{3} \partial_{\xi}^{3} k + \partial_{\xi}^{-1} \partial_{y}^{2} k \right) (x-t,\tau) + F_{0} = 0,\\
\end{aligned}
\end{equation*}
\normalsize

\noindent where 

\begin{equation*}
F_{0} =  \partial_{y} v_{(1/2)}^{0} -  \partial_{\xi}^{-1} \partial_{y}^{2} k^{0}.
\end{equation*}

\noindent Therefore, in order to avoid a linear growth (see Lemma \ref{control_diff_speed}), $k$ must satisfies the KP equation \eqref{KP_eq}. The following Lemma is a local wellposedness result for the KP equation (see Lemma 7.22 in \cite{Lannes_ww} or \cite{Saut_KP,bourgainKP,ukaiKP}). 

\begin{prop}\label{existence_KP}
\noindent Let $N \geq 5$ and $k_{0} \in \partial_{x} H^{N}(\RR)$. Then, there exists a time $T > 0$ and a unique solution $k \in \mathcal{C} \left([0,T]; \partial_{x} H^{N}(\RR) \right)$ to the KP equation \eqref{KP_eq} and one has

\begin{equation*}
\lver \partial_{\xi}^{-1} k(t,\cdot) \rver_{H^{N}} \leq C \left(T, \lver \partial_{\xi}^{-1} k_{0} \rver_{H^{N}} \right).
\end{equation*}

\noindent Furthermore, if $N \geq 6$ and $\partial_{y}^{2} k_{0} \in \partial_{x}^{2} H^{N-4}(\RR)$, then $\partial_{y}^{2} k \in \mathcal{C} \left([0,T]; \partial_{x}^{2} H^{N-4}(\RR) \right)$ and one has

\begin{align*}
&\lver \partial_{y}^{2} k_{0} \partial_{\xi}^{-2} k(t,\cdot) \rver_{H^{N-4}} \leq C \left(T, \lver \partial_{\xi}^{-2} \partial_{y}^{2} k_{0} \rver_{H^{N-4}}, \lver \partial_{\xi}^{-1} k_{0} \rver_{H^{N}} \right).
\end{align*}
\end{prop}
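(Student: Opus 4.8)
The plan is to split the statement into the two parts, exactly as in the proof of Proposition \ref{existence_RKP}. For the first assertion, the local wellposedness of the KP equation \eqref{KP_eq} in $\partial_{x}H^{N}(\RR)$ with the stated energy bound is precisely the content of Lemma 7.22 in \cite{Lannes_ww} (see also \cite{Saut_KP,bourgainKP,ukaiKP}); I would simply invoke this, noting that writing $k=\partial_{\xi}\tilde{k}$ turns \eqref{KP_eq} into an evolution equation for $\tilde{k}=\partial_{\xi}^{-1}k$ on $H^{N}(\RR)$, whose well-posedness and Gr\"onwall-type bound give the claim.

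For the second assertion, I would follow the scheme already used for the rotation-modified KP equation. Let $S(t)$ denote the semigroup of the linearized KP equation $\partial_{\tau}k+\tfrac16\partial_{\xi}^{3}k+\tfrac12\partial_{\xi}^{-1}\partial_{y}^{2}k=0$, which acts unitarily on $H^{N}(\RR)$. Setting $\tilde{k}=\partial_{\tau}k$, differentiating \eqref{KP_eq} in $\tau$ gives
\begin{equation*}
\partial_{\tau}\tilde{k}+\tfrac16\partial_{\xi}^{3}\tilde{k}+\tfrac12\partial_{\xi}^{-1}\partial_{y}^{2}\tilde{k}+\tfrac32\partial_{\xi}(\tilde{k}k)=0,
\end{equation*}
so Duhamel's formula yields $\partial_{\xi}^{-1}\tilde{k}(\tau)=S(\tau)\partial_{\xi}^{-1}\tilde{k}_{0}-\tfrac32\int_{0}^{\tau}S(\tau-s)(k(s)\tilde{k}(s))\,ds$. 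By product estimates in $H^{N-4}$ this shows $\tilde{k}\in\mathcal{C}([0,T];\partial_{x}H^{N-3}(\RR))$, provided $\partial_{\xi}^{-1}\tilde{k}_{0}\in H^{N-4}$, which follows from $\partial_{y}^{2}k_{0}\in\partial_{x}^{2}H^{N-4}(\RR)$ by reading off $\partial_{\xi}^{-1}\tilde{k}_{0}$ from \eqref{KP_eq} at $\tau=0$. Then I would use the algebraic identity obtained by integrating \eqref{KP_eq} once in $\xi$,
\begin{equation*}
\tfrac12\partial_{y}^{2}\partial_{\xi}^{-2}k=-\partial_{\xi}^{-1}\tilde{k}-\tfrac34 k^{2}-\tfrac16\partial_{\xi}^{2}k,
\end{equation*}
whose right-hand side lies in $H^{N-4}(\RR)$, to conclude $\partial_{y}^{2}k\in\mathcal{C}([0,T];\partial_{x}^{2}H^{N-4}(\RR))$ together with the stated bound on $\partial_{\xi}^{-2}k$ in terms of $\lver\partial_{\xi}^{-2}\partial_{y}^{2}k_{0}\rver_{H^{N-4}}$ and $\lver\partial_{\xi}^{-1}k_{0}\rver_{H^{N}}$.

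The main obstacle here is purely bookkeeping: tracking which low-order norm of $k$ controls $\partial_{\xi}^{-1}\tilde{k}$, and checking that the product $k\tilde{k}$ and the quadratic term $k^{2}$ can be estimated in $H^{N-4}$ using only the norms listed in the statement (this is where $N\geq6$ is needed so that $N-4\geq2$ and $H^{N-4}$ is an algebra). There is no genuinely hard analytic step — the KP well-posedness is cited, and everything else mirrors the already-proven Proposition \ref{existence_RKP} with the rotation term $-\tfrac12\partial_{\xi}^{-1}k$ simply dropped, which only makes the identities cleaner (one fewer term on the right-hand side of the algebraic relation, and no $\partial_{\xi}^{-2}k_{0}$ dependence needed in the final bound).
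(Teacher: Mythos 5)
Your proposal is correct and is essentially the argument the paper intends: the paper itself gives no written proof of Proposition \ref{existence_KP} beyond citing Lemma 7.22 in \cite{Lannes_ww} and the KP literature, and your second part reproduces, step for step, the paper's own proof of Proposition \ref{existence_RKP} (Duhamel for $\tilde{k}=\partial_{\tau}k$, then the algebraic identity for $\partial_{y}^{2}\partial_{\xi}^{-2}k$) with the rotation term $-\tfrac{1}{2}\partial_{\xi}^{-1}k$ dropped. No gaps.
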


\noindent We can now establish a rigorous justification of the KP equation.

\begin{thm}\label{full_just_KP}
\noindent Let $k^{0} \in \partial_{x}^{2} H^{12}(\RR)$ such that $1+\epsilon k^{0} \geq h_{\min} > 0$ and $v_{(1/2)}^{0} \in \partial_{x} H^{8}(\RR)$, $v_{(1)}^{0} \in H^{7}(\RR)$. Suppose that \upshape$\left( \mu,\epsilon,\gamma,\text{Ro} \right) \in \mathcal{A}_{\text{KP}}$\itshape. Denote $v^{0} = \sqrt{\mu} v^{0}_{(1/2)} + \mu v^{0}_{(1)}$. Then, there exists a time $T_{0} > 0$, such that we have

\medskip

\noindent (i) a unique classical solution \upshape$\left(\zeta_{B},u_{B}, v_{B} \right)$ \itshape of \eqref{boussi_weakrot} with initial data \upshape$\left(k^{0}, k^{0}, v^{0} \right)$ \itshape on $\left[ 0, \frac{T_{0}}{\mu} \right]$.

\medskip

\noindent (ii)  a unique classical solution \upshape$k$ \itshape of \eqref{KP_eq} with initial data \upshape$k^{0}$ \itshape on $\left[ 0, T_{0} \right]$.

\medskip 

\noindent (iii) If we define $\left(\zeta_{KP}, u_{KP} \right)(t,x) = \left( k(x-t,y,\mu t),k(x-t,y,\mu t ) \right)$ we have the following error estimate for all $0 \leq t \leq \frac{T_{0}}{\mu}$,

\begin{equation*}
\lver \left(\zeta_{B},u_{B} \right) - \left(\zeta_{KP},u_{KP} \right) \rver_{L^{\infty}([0,t] \times \RR)} \leq  C \frac{\mu t}{1+t} (1+ \sqrt{\mu} t) 
\end{equation*}
\itshape

\noindent where $C = C\left(\frac{1}{h_{\min}}, \mu_{0}, \lver \partial_{x}^{-2} k^{0} \rver_{H^{12}}, \lver \partial_{x}^{-1} v^{0}_{(1/2)} \rver_{H^{8}}, \lver v^{0}_{(1)} \rver_{H^{7}} \right)$.
\end{thm}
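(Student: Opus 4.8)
\noindent The plan is to follow the same scheme as the proof of Theorem~\ref{full_just_RKP}, the only two changes being that the resonance-cancellation condition now produces the KP equation~\eqref{KP_eq} instead of the rotation-modified one, and that the transverse corrector carries the additional term $v_{(1)}$. As there, I allow $C$ to depend on slightly higher norms of the data, say $C=C\big(\frac{1}{h_{\min}},\mu_{0},\lver \partial_{x}^{-2}k^{0}\rver_{H^{12}},\lver \partial_{x}^{-1}v^{0}_{(1/2)}\rver_{H^{9}},\lver v^{0}_{(1)}\rver_{H^{8}}\big)$, so that enough derivatives of the correctors are available. Point~(i) is Proposition~\ref{existence_boussi} applied with the data $\big(k^{0},k^{0},v^{0}\big)$, $v^{0}=\sqrt{\mu}\,v^{0}_{(1/2)}+\mu v^{0}_{(1)}$, which lies in $X^{N}_{\mu}(\RR)$ for the relevant $N$ and yields an existence time of the form $T_{0}/\mu$. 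Point~(ii) is Proposition~\ref{existence_KP}, whose higher regularity statements give uniform control of $\partial_{x}^{-1}k$, $\partial_{x}^{-1}\partial_{y}k$ and $\partial_{x}^{-2}\partial_{y}^{2}k$ on $[0,T_{0}]$, with $k$ the KP solution with data $k^{0}$.

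\medskip

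\noindent Next I would build the approximate solution~\eqref{ansatz_KP} with this $k$. The transverse components of the conditions $R^{2}_{(1/2)}=0$ and $R^{2}_{(1)}=0$ fix the correctors explicitly, $v_{(1/2)}=\partial_{x}^{-1}\partial_{y}k+v^{0}_{(1/2)}-\partial_{x}^{-1}\partial_{y}k^{0}$ and $v_{(1)}=\partial_{x}^{-1}k+v^{0}_{(1)}-\partial_{x}^{-1}k^{0}$. Setting $w_{\pm}:=\zeta_{(1)}\pm u_{(1)}$, the remaining conditions $R^{1}_{(1)}=0$ and the longitudinal component of $R^{2}_{(1)}=0$ turn into two transport equations of the form $(\partial_{t}\pm\partial_{x})w_{\pm}+G_{\pm}(x-t,\tau)+F_{0}=0$, where the sources $G_{\pm}$ travel at speed $-1$ and $F_{0}=\partial_{y}v^{0}_{(1/2)}-\partial_{\xi}^{-1}\partial_{y}^{2}k^{0}$ is stationary. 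Applying Lemma~\ref{control_diff_speed} with $c_{1}=\pm1$ against the source speeds $-1$ and $0$: in the $w_{+}$ equation the transport speed $+1$ matches the speed of the component $2\partial_{\tau}k+3k\partial_{\xi}k+\frac13\partial_{\xi}^{3}k+\partial_{\xi}^{-1}\partial_{y}^{2}k$, so linear growth of $w_{+}$ is avoided exactly when this component vanishes, i.e. when $k$ solves~\eqref{KP_eq}; no such constraint arises in the $w_{-}$ equation. Lemma~\ref{control_diff_speed} then gives $\lver \zeta_{(1)}(t)\rver_{H^{2}}+\lver u_{(1)}(t)\rver_{H^{2}}\le C\frac{t}{1+t}$, and the same bound for all $x$-, $y$- and $\tau$-derivatives of the correctors after differentiating the transport equations and re-applying the lemma. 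Substituting these into the formulas for $R_{1}$ and $R_{2}$ and using the KP estimates of Proposition~\ref{existence_KP}, one gets $\lver R_{1}(t)\rver_{H^{3}}+\lver R_{2}(t)\rver_{H^{3}}\le C$ on $[0,T_{0}/\mu]$; hence the full Boussinesq--Coriolis residual of $\big(\zeta_{app},u_{app},v_{app}\big)$, which also contains the term $\mu^{3/2}\partial_{y}v_{(1)}$ in the mass equation, is $\mathcal{O}(\mu^{3/2})$ in $H^{3}(\RR)$.

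\medskip

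\noindent To conclude, since $\zeta_{(1)}$ and $u_{(1)}$ vanish at $t=0$ the approximate solution agrees with the Boussinesq--Coriolis data at $t=0$, so Proposition~\ref{stability_boussi} (zero initial discrepancy, residual $\mathcal{O}(\mu^{3/2})$ in $H^{3}$) yields $\lver \big(\zeta_{B},u_{B},v_{B}\big)-\big(\zeta_{app},u_{app},v_{app}\big)\rver_{L^{\infty}([0,t]\times\RR)}\le C\mu^{3/2}t$ by Sobolev embedding in two dimensions. From~\eqref{ansatz_KP} one has $\big(\zeta_{app},u_{app}\big)-\big(\zeta_{KP},u_{KP}\big)=\mu\big(\zeta_{(1)},u_{(1)}\big)$, hence $\lver \big(\zeta_{app},u_{app}\big)-\big(\zeta_{KP},u_{KP}\big)\rver_{L^{\infty}}\le C\frac{\mu t}{1+t}$. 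Adding the two estimates and distinguishing $t\le1$ (where $\mu^{3/2}t\le\mu t\le 2\frac{\mu t}{1+t}$) from $t\ge1$ (where $\mu^{3/2}t\le 2\frac{\mu^{3/2}t^{2}}{1+t}$, the right-hand side being a summand of $\frac{\mu t}{1+t}(1+\sqrt{\mu}t)$) absorbs the $C\mu^{3/2}t$ term into $C\frac{\mu t}{1+t}(1+\sqrt{\mu}t)$, which is the estimate in~(iii).

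\medskip

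\noindent The main obstacle is the uniform bound $\lver R_{1}(t)\rver_{H^{3}}+\lver R_{2}(t)\rver_{H^{3}}\le C$, \emph{uniform in the small parameters and over the long time $[0,T_{0}/\mu]$}. This is what forces the rather demanding regularity $k^{0}\in\partial_{x}^{2}H^{12}(\RR)$: the remainders involve up to three spatial and slow-time derivatives of the correctors, the correctors involve $\partial_{x}^{-1}$ (and, through $R_{2,1}$ and $R_{2,2}$, spatial derivatives of $\partial_{x}^{-1}\partial_{y}k$ and $\partial_{x}^{-1}\partial_{y}^{2}k$), and keeping all of these bounded on $[0,T_{0}/\mu]$ needs the full strength of the $\partial_{\xi}^{-1}$ and $\partial_{\xi}^{-2}\partial_{y}^{2}$ estimates of Proposition~\ref{existence_KP} together with the no-growth dichotomy of Lemma~\ref{control_diff_speed}. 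The feature proper to this very weak rotation regime --- the extra corrector $v_{(1)}$ and the extra residual $\mu^{3/2}\partial_{y}v_{(1)}$ in the mass equation, both absent in the rotation-modified KP case --- has to be tracked but causes no essential difficulty, since $v_{(1)}=\partial_{x}^{-1}k+\cdots$ is controlled directly by the KP estimates.
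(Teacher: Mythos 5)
Your proof follows the same route as the paper's, which simply defers to the proof of Theorem \ref{full_just_RKP}: build the ansatz \eqref{ansatz_KP}, fix $v_{(1/2)}$ and $v_{(1)}$ from the transverse conditions, eliminate secular growth via Lemma \ref{control_diff_speed} (which forces the KP equation from the $w_{+}$ equation only), bound $R_{1}$, $R_{2}$ and the extra $\mu^{3/2}\partial_{y}v_{(1)}$ term uniformly on $\left[0,\frac{T_{0}}{\mu}\right]$, and conclude with Proposition \ref{stability_boussi}. The only blemish is the parenthetical claim that the sources travel at speed $-1$ --- functions of $x-t$ travel at speed $+1$, which is exactly why the resonance sits in the $w_{+}$ equation and not in the $w_{-}$ equation --- but your subsequent analysis is the correct one, so this is a harmless slip.
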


\begin{proof}
\noindent The proof is very similar to the proof of Theorem \ref{full_just_RKP}.
\end{proof}

\begin{remark}
\noindent Contrary to the justification of the KP equation in the irrotational setting (see Part 7.2 in \cite{Lannes_ww} or \cite{Saut_Lannes_KP}), the transverse part of the horizontal velocity $v$ must contain an order $\mathcal{O} (\mu)$ contribution. Notice that if one considers a weaker Coriolis forcing, for instance $\frac{\epsilon}{\text{Ro}} = \mu^{\frac{3}{2}}$, this assumption is no more necessary.
\end{remark}

\section{Which equation for which asymptotic regime ?}\label{comparaison_models}

\subsection{The Ostrovsky and KdV equations}\label{ostrov_kdv}

\noindent In Section \ref{KP_approx}, we derived two asymptotic models in the long wave regime $(\epsilon=\mu)$. First, if $\gamma=\sqrt{\mu}$ and $\frac{\epsilon}{\text{Ro}} = \sqrt{\mu}$, we derived the rotation-modified KP equation

\begin{equation*}
\partial_{\xi} \left( \partial_{\tau} k +  \frac{3}{2} k \partial_{\xi} k + \frac{1}{6} \partial_{\xi}^{3} k \right) + \frac{1}{2} \partial_{yy} k = \frac{1}{2} k.
\end{equation*}

\noindent Then, if $\gamma=\sqrt{\mu}$ and $\frac{\epsilon}{\text{Ro}} = \mu$, we obtained the KP equation

\begin{equation*}
\partial_{\xi} \left( \partial_{\tau} k +  \frac{3}{2} k \partial_{\xi} k + \frac{1}{6} \partial_{\xi}^{3} k \right) + \frac{1}{2} \partial_{yy} k = 0.
\end{equation*}

\noindent In \cite{my_long_wave_corio}, we performed a similar derivation in the long wave regime under the assumption that $\gamma = \mathcal{O} (\mu^{2})$. When $\frac{\epsilon}{\text{Ro}} = \sqrt{\mu}$, we derived the Ostrovsky equation

\begin{equation}\label{ostrov_eq}
\partial_{\xi} \left( \partial_{\tau} k +  \frac{3}{2} k \partial_{\xi} k + \frac{1}{6} \partial_{\xi}^{3} k \right) = \frac{1}{2} k,
\end{equation}

\noindent and when $\frac{\epsilon}{\text{Ro}} = \mu$, we derived the KdV equation

\begin{equation}\label{kdv_eq}
\partial_{\tau} k +  \frac{3}{2} k \partial_{\xi} k + \frac{1}{6} \partial_{\xi}^{3} k = 0.
\end{equation}

\noindent We would like to emphasize that we can weaken the assumption $\gamma = \mathcal{O} (\mu^{2})$ into $\gamma = \mu$. In the following, we show this fact on the Ostrovsky equation. We consider the asymptotic regime

\begin{equation*}
\mathcal{A}_{\text{ostrov}} = \left\{ \left(\mu, \epsilon, \gamma, \text{Ro} \right), 0 \leq \mu \leq \mu_{0}, \epsilon = \mu, \gamma = \mu, \frac{\epsilon}{\text{Ro}} = \sqrt{\mu} \right\}.
\end{equation*}

\noindent Then we seek an approximate solution $\left(\zeta_{app}, u_{app}, v_{app} \right)$ of the Boussinesq-Coriolis equations in the form

\begin{equation*}
\begin{aligned}
&\zeta_{app}(t,x,y) = k(x-t,y,\mu t) + \mu \zeta_{(1)}(t,x,y,\mu t),\\
&u_{app}(t,x,y) = k(x-t,y,\mu t) + \mu u_{(1)}(t,x,y,\mu t),\\
&v_{app}(t,x,y) = \sqrt{\mu} v_{(1/2)}(t,x,y,\mu t) + \mu v_{(1)}(t,x,y,\mu t)
\end{aligned}
\end{equation*}

\noindent Plugging the ansatz into the Boussinesq-Coriolis equations, we obtain

\small
\begin{equation*}
\left\{
\begin{aligned}
&\partial_{t} \zeta_{app} + \nabla^{\gamma} \cdot \left( [1+\mu \zeta_{app}] \overline{\textbf{V}}_{app} \right) = \mu R^{1}_{(1)} + \mu^{\frac{3}{2}} R_{1},\\
&\left(1- \frac{\mu}{3} \nabla^{\gamma} \nabla^{\gamma} \cdot \right) \partial_{t} \overline{\textbf{V}}_{app} + \nabla^{\gamma} \zeta_{app} + \mu \overline{\textbf{V}}_{app} \cdot \nabla^{\gamma} \overline{\textbf{V}}_{app} + \sqrt{\mu} \overline{\textbf{V}}_{app}^{\perp} = \sqrt{\mu} R^{2}_{(1/2)} + \mu R^{2}_{(1)} + \mu^{\frac{3}{2}} R_{2}.
\end{aligned}
\right.
\end{equation*}
\normalsize

\noindent where

\small
\begin{equation*}
\begin{aligned}
&R_{(1)}^{1} = \partial_{t} \zeta_{(1)} + \partial_{x} u_{(1)} + \partial_{\tau} k + 2 k \partial_{\xi} k,\\
&R_{(1/2)}^{2} = \begin{pmatrix} 0 \\ \partial_{t} v_{(1/2)} + k \end{pmatrix} \text{   and   } R_{(1)}^{2} = \begin{pmatrix} \partial_{t} u_{(1)} + \partial_{x} \zeta_{(1)} + \partial_{\tau} k +  \frac{1}{3} \partial^{3}_{\xi}  k + k \partial_{\xi} k - v_{(1/2)} \\ \partial_{t} v_{(1)} + \partial_{y} k \end{pmatrix},\\
\end{aligned}
\end{equation*}
\normalsize

\noindent and where $R_{1}$, $R_{2}$ are remainders similar to the ones found in Sections \ref{model_RKP} and \ref{model_KP}. Then, using the same strategy than before, we impose that $R^{1}_{(1)} = 0$ and $R^{2}_{(1/2)} = R^{2}_{(1)} = 0$. We obtain

\begin{align*}
&v_{(1/2)} = \partial_{\xi}^{-1} k + v_{(1/2)}^{0} - \partial_{\xi}^{-1} k^{0},\\
&v_{(1)} = \partial_{\xi}^{-1} \partial_{y} k + v_{(1)}^{0} - \partial_{\xi}^{-1} \partial_{y} k^{0},
\end{align*}

\noindent and, denoting $w_{\pm} = \zeta_{(1)} \pm u_{(1)}$, we get

\small
\begin{equation*}
\begin{aligned}
&\left(\partial_{t} + \partial_{x} \right) w_{+} + \left( 2 \partial_{\tau} k + 3 k \partial_{\xi} k + \frac{1}{3} \partial_{\xi}^{3} k - \partial_{\xi}^{-1} k \right) (x-t,\tau) - F_{0} = 0,\\
&\left(\partial_{t} - \partial_{x} \right) w_{-} + \left(k \partial_{\xi} k - \frac{1}{3} \partial_{\xi}^{3} k + \partial_{\xi}^{-1} k \right) (x-t,\tau) + F_{0} = 0,\\
\end{aligned}
\end{equation*}
\normalsize

\noindent where $F_{0} =  v_{(1/2)}^{0} - \partial_{\xi}^{-1} k^{0}$. In order to avoid a linear growth (see Lemma \ref{control_diff_speed}), $k$ must satisfies the Ostrovsky equation \eqref{ostrov_eq}. Proceeding as in \cite{my_long_wave_corio}, we can generalize Theorem 3.9 in \cite{my_long_wave_corio} to the asymptotic regime $\mathcal{A}_{\text{ostrov}}$. A solution of the Ostrovsky equation provides a $\mathcal{O}(\sqrt{\mu})$ approximation of the Boussinesq-Coriolis equations over a time $\mathcal{O} \left( \frac{1}{\mu} \right)$. We can proceed similarly for the KdV equation \eqref{kdv_eq}. Under the asymptotic regime

\begin{equation*}
\mathcal{A}_{\text{KdV}} = \left\{ \left(\mu, \epsilon, \gamma, \text{Ro} \right), 0 \leq \mu \leq \mu_{0}, \epsilon = \mu, \gamma = \mu, \frac{\epsilon}{\text{Ro}} = \mu \right\}.
\end{equation*}

\noindent and with the ansatz

\begin{equation*}
\begin{aligned}
&\zeta_{app}(t,x,y) = k(x-t,y,\mu t) + \mu \zeta_{(1)}(t,x,y,\mu t),\\
&u_{app}(t,x,y) = k(x-t,y,\mu t) + \mu u_{(1)}(t,x,y,\mu t),\\
&v_{app}(t,x,y) = \mu v_{(1)}(t,x,y,\mu t),
\end{aligned}
\end{equation*}

\noindent we can generalize Theorem 3.12 in \cite{my_long_wave_corio} to the asymptotic regime $\mathcal{A}_{\text{KdV}}$. A solution of the KdV equation provides a $\mathcal{O}(\mu)$ approximation of the Boussinesq-Coriolis equations over a time $\mathcal{O} \left( \frac{1}{\mu} \right)$.

\subsection{Conclusion}

\noindent We summarize Section \ref{KP_approx} and Subsection \ref{ostrov_kdv} by the following table. Notice that all of these models provide a $\mathcal{O}(\sqrt{\mu})$ approximation (at least) in the long wave regime ($\epsilon=\mu$) of the Boussinesq-Coriolis equations over a time $\mathcal{O} \left( \frac{1}{\mu} \right)$. 

\medskip
\medskip

\begin{table}[!h]
\centering
\begin{tabular}{|c|c|c|}
  \hline
  \diagbox[width=3.5em]{$\gamma$}{$\frac{\epsilon}{\text{Ro}}$} & $\sqrt{\mu}$ & $\mu$ \\\hline
  $\sqrt{\mu}$ & Rotation-modified KP equation & KP equation \\\hline
  $\;\; \mu $ & Ostrovsky equation & KdV equation \\\hline
\end{tabular}
\end{table}

\section*{Acknowledgments}

\noindent The author would like to thank Jean-Claude Saut for the fruitful discussions about the KP approximation.

\newpage
\footnotesize
\bibliographystyle{plain}
\bibliography{biblio}
\normalsize

\end{document}